\newtheorem{theorem}{Theorem}[section]
\newtheorem{corollary}[theorem]{Corollary}
\newtheorem{proposition}[theorem]{Proposition}
\newtheorem{lemma}[theorem]{Lemma}
\theoremstyle{definition}
\newtheorem{definition}[theorem]{Definition}
\theoremstyle{remark}
\newtheorem{remark}[theorem]{Remark}
\newtheorem{example}[theorem]{Example}
\DeclareMathOperator{\id}{id}
\DeclareMathOperator{\pr}{pr}
\DeclareMathOperator{\Diff}{Diff}
\DeclareMathOperator{\SO}{SO}
\DeclareMathOperator{\GL}{GL}
\DeclareMathOperator{\Aut}{Aut}
\DeclareMathOperator{\amb}{amb}
\DeclareMathOperator{\Ham}{Ham}
\DeclareMathOperator{\std}{std}
\DeclareMathOperator{\Image}{Im}
\DeclareMathOperator{\End}{End}
\DeclareMathOperator{\Vect}{Vect}
\DeclareMathOperator{\Real}{Re}
\begin{document}

\title{Deformation of Sasakian metrics}
\author{Hiraku Nozawa}
\address{Unit\'{e} de Math\'{e}matiques Pures et Applique\'{e}s\\
\'{E}cole Normale Sup\'{e}rieure de Lyon (site Sciences)
\\
46, all\'{e}e d'Italie 69364 Lyon Cedex 07, France}
\email{nozawahiraku@06.alumni.u-tokyo.ac.jp}
\thanks{The author was partially supported by Grant-in-Aid for JSPS Fellows (19-4609). The author was partially supported by Postdoctoral Fellowship of French government (662014L)}
\keywords{Deformation theory, Sasakian metrics, transversely holomorphic foliations}
\subjclass[2000]{Primary: 32G07; Secondary: 53C25}

\begin{abstract}
Deformations of the Reeb flow of a Sasakian manifold as transversely K\"{a}hler flows may not admit compatible Sasakian metrics. We show that the triviality of the~$(0,2)$-component of the basic Euler class characterizes the existence of compatible Sasakian metrics for given small deformations of the Reeb flow as transversely holomorphic Riemannian flows. We also prove a Kodaira-Akizuki-Nakano type vanishing theorem for basic Dolbeault cohomology of homologically orientable transversely K\"{a}hler foliations. As a consequence of these results, we show that any small deformations of the Reeb flow of a positive Sasakian manifold admit compatible Sasakian metrics.
\end{abstract}
\maketitle

\tableofcontents

\section{Introduction}
\addtocontents{toc}{\protect\setcounter{tocdepth}{1}}

\subsection{Characterization of the stability of Sasakian metrics}
The Reeb flow of the contact form $\eta$ of a Sasakian manifold has a transverse K\"{a}hler structure, which is one of important underlying structures of Sasakian manifolds as well as $CR$-structures. In this paper, we investigate under which conditions given families of deformation of the Reeb flow of $\eta$ as transversely holomorphic Riemannian flows have compatible Sasakian metrics. Although any small deformations of the complex structure of a compact K\"{a}hler manifold admit compatible K\"{a}hler metrics by a theorem of Kodaira and Spencer~\cite{Kodaira Spencer}, there exists a Sasakian manifold whose Reeb flow can be deformed so that it does not admit a compatible Sasakian metric. We present an example of such a Sasakian metric on a circle bundle over a complex torus in Section~\ref{Section : Example}. Seeing the example we note that the nontriviality of the~$(0,2)$-component of the basic Euler class of flows gives an obstruction for the existence of compatible Sasakian metrics. Our main result shows that this obstruction is unique, that is, small deformations of the Reeb flow admit compatible Sasakian metrics, if the basic Euler class is of degree~$(1,1)$. Note that Boyer and Galicki discussed a similar problem in Section~8.2 of~\cite{Boyer Galicki}. The product of $S^{1}$ with a Sasakian manifold admits geometric structures called a Vaisman manifold or a locally conformal K\"{a}hler manifold (see Ornea and Verbitsky~\cite{Ornea Verbitsky 2}). Belgun~\cite{Belgun} showed that both of structures are not stable under small deformations of complex structures. Note that Ornea and Verbitsky~\cite{Ornea Verbitsky} discovered the stability of another class of geometric structure under small deformations of complex structures, which they call locally conformal K\"{a}hler manifolds with potential.

A~$1$-dimensional foliation is called a flow in this article in accordance with references. Our main result is stated as follows: Let $V$ be an open neighborhood of~$0$ in $\mathbb{R}^{\ell}$ and $\{(\mathcal{F}^{t},J^{t},g^{t}_{\nu})\}_{t \in V}$ be a smooth family of transversely holomorphic Riemannian flows on a closed manifold~$M$. Assume that $(\mathcal{F}^{0},J^{0},g^{0}_{\nu})$ is the underlying transversely K\"{a}hler structure of the Reeb flow of a Sasakian metric $(g,\eta)$. 
\begin{theorem}\label{Theorem : Stability}
If the basic Euler class of $(\mathcal{F}^{t},J^{t})$ is of degree~$(1,1)$ for every $t$ in $V$, then there exists an open neighborhood $V_{1}$ of~$0$ in $V$ and a smooth family of Sasakian metrics $\{({g}^{t},\eta^{t})\}_{t \in V_{1}}$ on~$M$ such that the underlying transversely holomorphic flow of the Reeb flow of $({g}^{t},\eta^{t})$ is $(\mathcal{F}^{t},J^{t})$ for every $t$ in $V_{1}$ and $({g}^{0},\eta^{0})=({g},\eta)$.
\end{theorem}
\noindent For the existence of a smooth family of compatible Sasakian metrics, it is necessary that the basic Euler class of $(\mathcal{F}^{t},J^{t})$ is of degree~$(1,1)$ (see Lemma~\ref{Lemma : Sasakian case}). In this sense, Theorem~\ref{Theorem : Stability} characterizes the stability of the Sasakian metric $(g,\eta)$.

The difficulty to prove Theorem~\ref{Theorem : Stability} comes from the noncontinuous change of the complex of basic differential forms of foliations. The families of partial differential equation to solve to prove Theorem~\ref{Theorem : Stability} is discontinuous with respect to the parameter $t$ because of this discontinuous change of basic de Rham complex. To avoid this difficulty, we will apply the invariance of geometrically tautness of Riemannian flows obtained in~\cite{Nozawa 2}. This result allows us to convert problems on families of basic de Rham complexes to problems on families the de Rham complexes, where we can apply the Kodaira-Spencer theory~\cite{Kodaira Spencer} on smooth families of self-adjoint strongly elliptic differential operators.

\subsection{Vanishing theorem for basic cohomology of positive Sasakian manifolds}

We show
\begin{theorem}\label{Theorem : Vanishing}
Let $(M,\mathcal{F})$ be the underlying transversely K\"{a}hler flow of a positive Sasakian manifold. Then we have
\begin{equation*}
H^{0,q}_{b}(M/\mathcal{F}) = 0
\end{equation*}
for $q > 0$ and 
\begin{equation*}
H^{p,0}_{b}(M/\mathcal{F}) = 0
\end{equation*}
for $p > 0$.
\end{theorem}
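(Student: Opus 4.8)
The plan is to obtain Theorem \ref{Theorem : Vanishing} from the transverse Kodaira--Akizuki--Nakano vanishing theorem together with transverse Hodge theory, after observing that positivity of the Sasakian structure is exactly transverse positivity of the transverse anticanonical bundle. Write $n$ for the transverse complex dimension. First recall two structural facts. (i) The underlying flow $\mathcal{F}$ of a Sasakian manifold is geodesible (the Reeb orbits are geodesics), hence taut, hence homologically orientable as a transversely K\"{a}hler Riemannian foliation; consequently El Kacimi's transverse Hodge--de Rham theory applies, so each $H^{p,q}_{b}(M/\mathcal{F})$ is finite dimensional, is represented by basic $\bar\partial_{b}$-harmonic $(p,q)$-forms, and the conjugation symmetry $\overline{H^{p,q}_{b}(M/\mathcal{F})}=H^{q,p}_{b}(M/\mathcal{F})$ holds. (ii) A positive Sasakian structure has $c_{1}^{B}(\mathcal{F})$ represented by a positive basic $(1,1)$-form; equivalently, the transverse anticanonical line bundle $K_{\mathcal{F}}^{-1}$ (the transverse holomorphic line bundle with $c_{1}=c_{1}^{B}(\mathcal{F})$) carries a Hermitian metric whose transverse Chern curvature is a positive basic $(1,1)$-form, i.e.\ $K_{\mathcal{F}}^{-1}$ is transversely positive.

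For the first vanishing, use the canonical isomorphism of transverse holomorphic line bundles $\Omega^{n}_{b}\otimes K_{\mathcal{F}}^{-1}\cong\mathcal{O}$, which identifies the basic Dolbeault complex with coefficients in $K_{\mathcal{F}}^{-1}$ in transverse bidegree $(n,\bullet)$ with the ordinary basic Dolbeault complex $(\Omega^{0,\bullet}_{b},\bar\partial_{b})$; hence $H^{n,q}_{b}(M/\mathcal{F},K_{\mathcal{F}}^{-1})\cong H^{0,q}_{b}(M/\mathcal{F})$. Since $K_{\mathcal{F}}^{-1}$ is transversely positive, the transverse Kodaira--Akizuki--Nakano vanishing theorem gives $H^{n,q}_{b}(M/\mathcal{F},K_{\mathcal{F}}^{-1})=0$ for $n+q>n$, that is, for all $q>0$. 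For the second vanishing, the conjugation symmetry from (i) yields $H^{p,0}_{b}(M/\mathcal{F})\cong\overline{H^{0,p}_{b}(M/\mathcal{F})}$, which vanishes for $p>0$ by the first part.

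As a self-contained alternative that avoids the general vanishing theorem, one can argue directly by the transverse Bochner technique. Since $H^{p,q}_{b}(M/\mathcal{F})$ depends only on $(\mathcal{F},I)$ and not on the transverse metric, and since by (ii) the class $2\pi c_{1}^{B}(\mathcal{F})$ has a positive representative, El Kacimi's transverse version of the Calabi--Yau theorem lets us replace $\tilde{g}$ by a transverse K\"{a}hler deformation (again Sasakian) whose transverse Ricci form $\rho^{T}$ is positive definite. The transverse Bochner--Kodaira--Nakano identity then expresses the basic $\bar\partial_{b}$-Laplacian on basic $(0,q)$-forms as a nonnegative rough Laplacian plus a zeroth-order operator built from $\rho^{T}$ which is strictly positive for $q\ge 1$; integrating against a basic harmonic $(0,q)$-form over the compact taut foliated manifold forces it to vanish, so that the space of such harmonic forms is $0$ for $q>0$, and then the harmonic $(p,0)$ case vanishes for $p>0$ by conjugation.

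The main obstacle is not in the reductions above, which are formal, but in the two inputs: proving the transverse Kodaira--Akizuki--Nakano theorem (respectively, in the alternative route, setting up the transverse Bochner--Kodaira--Nakano identity and pinning down the sign of its curvature term) within the calculus of transversely elliptic operators on a homologically orientable transversely K\"{a}hler foliation, and checking that homological orientability --- guaranteed here by tautness of the Sasakian flow --- is exactly what makes the transverse Hodge decomposition, the conjugation symmetry, and the integration by parts in the Bochner argument legitimate.
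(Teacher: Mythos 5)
Your proposal is correct and follows essentially the same route as the paper: homological orientability of the Sasakian flow makes El Kacimi's transverse Hodge theory and the conjugation symmetry $H^{p,0}_{b}\cong\overline{H^{0,p}_{b}}$ available, the cochain-level identification $\Omega^{n,\bullet}_{b}(M/\mathcal{F},K_{\mathcal{F}}^{*})\cong\Omega^{0,\bullet}_{b}(M/\mathcal{F})$ is exactly the content of the paper's Lemma \ref{Lemma : duality} (the map $\Theta$, which the paper additionally checks to intertwine $\overline{\partial}^{*}$ so as to identify harmonic spaces), and the conclusion then follows from Theorem \ref{Theorem : Kodaira Akizuki Nakano vanishing theorem} applied to the positive bundle $K_{\mathcal{F}}^{*}$. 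Your point that this identification must be made at the level of basic Dolbeault complexes rather than via sheaf cohomology is precisely the subtlety the paper flags, so the argument is sound as stated.
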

\noindent Theorem~\ref{Theorem : Vanishing} is a generalization of a part of Proposition~2.4 of Boyer, Galicki and Nakamaye~\cite{Boyer Galicki Nakamaye}, where they assume that Sasakian manifolds are quasi-regular, that is, every leaf of $\mathcal{F}$ is closed. In this case, the leaf space $M/\mathcal{F}$ is a complex orbifold. Thus they deduced Theorem~\ref{Theorem : Vanishing} from Kodaira-Baily vanishing theorem for complex orbifold due to Baily~\cite{Baily}.

To show Theorem~\ref{Theorem : Vanishing} for general positive Sasakian manifolds, there were two problems: The first problem is that the Kodaira-Akizuki-Nakano vanishing theorem was not known for general Sasakian manifolds whose Reeb flow has a non-closed orbits. The second problem was that $H^{p,q}_{b}(M/\mathcal{F}, E)$ may not be isomorphic to the sheaf cohomology $H^{q}(M, \Omega^{p}_{b} \otimes E)$, where $\Omega^{p}_{b}$ is the sheaf of basic holomorphic $p$-forms on $(M,\mathcal{F})$ as we will mention in Remark~\ref{Remark : sheaf}. Because of these difficulty, the classical argument to show Theorem~\ref{Theorem : Vanishing} for Fano manifolds did not work for basic cohomology of positive Sasakian manifolds. The second problem was pointed out by Boyer, Galicki and Nakamaye before Proposition~2.4 of~\cite{Boyer Galicki Nakamaye}. We show Theorem~\ref{Theorem : Vanishing} by solving these two problems: We will show that Kodaira-Akizuki-Nakano vanishing theorem is true for general homologically orientable transversely K\"{a}hler foliations as mentioned in the next section. We will solve the second problem by a simple observation on isomorphisms between the spaces of harmonic forms (see Section~\ref{Section : pos}).

\subsection{Kodaira-Akizuki-Nakano vanishing theorem for transversely K\"{a}hler foliations}
Let $(M,\mathcal{F})$ be a closed manifold with a homologically orientable transversely K\"{a}hler foliation of complex codimension $n$. Let $E$ be an $\mathcal{F}$-fibered Hermitian holomorphic line bundle over $(M,\mathcal{F})$.
\begin{theorem}\label{Theorem : Kodaira Akizuki Nakano vanishing theorem}
If $E$ is positive, then
\begin{equation*}
H^{p,q}_{b}(M/\mathcal{F}, E) = 0
\end{equation*}
for $p + q > n$.
\end{theorem}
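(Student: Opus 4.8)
The plan is to adapt the classical Bochner--Kodaira--Nakano identity to the basic setting. First I would fix a transversely K\"{a}hler metric on $(M,\mathcal{F})$ and a Hermitian metric on the $\mathcal{F}$-fibered holomorphic line bundle $E$, and form the basic Dolbeault complex $(\Omega^{p,q}_{b}(M/\mathcal{F},E),\bar{\partial}_{b})$ together with its formal adjoint $\bar{\partial}_{b}^{*}$ and Laplacian $\Box_{b}=\bar{\partial}_{b}\bar{\partial}_{b}^{*}+\bar{\partial}_{b}^{*}\bar{\partial}_{b}$. Because $\mathcal{F}$ is homologically orientable, the basic forms of top transverse degree carry an invariant transverse volume, so integration by parts along the leaf space is available and $\Box_{b}$ is formally self-adjoint; moreover the homological orientability is exactly what lets one invoke the basic Hodge theorem, identifying $H^{p,q}_{b}(M/\mathcal{F},E)$ with the space of $\Box_{b}$-harmonic $E$-valued basic $(p,q)$-forms. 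This reduces the theorem to showing that a positive line bundle forces the harmonic space to vanish when $p+q>n$.

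Next I would establish the basic analogue of the Nakano identity. Let $\nabla$ be the Chern connection of $E$, with $(1,1)$-curvature $\Theta$, and let $L$ denote the Lefschetz operator of wedging with the transverse K\"{a}hler form and $\Lambda$ its adjoint. Working transversely --- i.e. only on the normal bundle $\nu\mathcal{F}=TM/T\mathcal{F}$, whose induced structures are genuinely K\"{a}hler --- the pointwise Bochner--Kodaira--Nakano formula goes through verbatim: $\Box_{b}=\Box_{b}' + [\,i\Theta,\Lambda\,]$, where $\Box_{b}'=\partial_{b}^{\nabla}(\partial_{b}^{\nabla})^{*}+(\partial_{b}^{\nabla})^{*}\partial_{b}^{\nabla}\geq 0$. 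Here $\partial_{b}^{\nabla}$ is the basic $(1,0)$-part of $\nabla$; the point is that the curvature of the transverse Levi-Civita connection contributes nothing extra because the transverse metric is K\"{a}hler, and the fact that $E$ is $\mathcal{F}$-fibered guarantees its Chern connection and curvature are basic, so the identity stays inside the basic complex. Integrating over $M$ against the transverse volume and the leafwise measure (legitimate by homological orientability) gives, for a $\Box_{b}$-harmonic $\alpha\in\Omega^{p,q}_{b}(M/\mathcal{F},E)$,
\begin{equation}
0=\|\partial_{b}^{\nabla}\alpha\|^{2}+\|(\partial_{b}^{\nabla})^{*}\alpha\|^{2}+\bigl([\,i\Theta,\Lambda\,]\alpha,\alpha\bigr).
\end{equation}

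The final step is the linear-algebra estimate. Positivity of $E$ means $i\Theta$ is a positive $(1,1)$-form transversely, so the standard fact that on a K\"{a}hler vector space of complex dimension $n$ the operator $[\,i\Theta,\Lambda\,]$ is positive definite on $(p,q)$-covectors whenever $p+q>n$ applies pointwise on $\nu\mathcal{F}$; if the positivity constant is not uniform one can, after possibly rescaling the transverse K\"{a}hler metric (which does not change the cohomology), assume $i\Theta$ dominates the transverse form, giving $([\,i\Theta,\Lambda\,]\alpha,\alpha)\geq c\,(p+q-n)\|\alpha\|^{2}$ with $c>0$. The displayed identity then forces $\alpha=0$, and by the basic Hodge isomorphism $H^{p,q}_{b}(M/\mathcal{F},E)=0$ for $p+q>n$, as claimed. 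The main obstacle I anticipate is not the curvature algebra, which is local and identical to the K\"{a}hler case, but the analytic foundations: one must be sure that the basic Hodge decomposition and the integration-by-parts needed for $\Box_{b}$ are valid, and this is precisely where the hypotheses of homological orientability and $\mathcal{F}$-fiberedness of $E$ must be used carefully; I would lean on the basic Hodge theory for Riemannian foliations (in the form recalled earlier in the paper and its references) rather than reprove it.
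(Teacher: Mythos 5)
Your overall strategy --- basic Hodge theory to represent $H^{p,q}_{b}(M/\mathcal{F},E)$ by harmonic forms, a transverse Bochner--Kodaira--Nakano identity proved by purely local (pointwise on $\nu\mathcal{F}$) computation, and then the curvature estimate --- is exactly the route the paper takes, so the architecture is right. Two points, however, need repair. The more serious one is the final linear-algebra step: it is \emph{not} true that $[\,i\Theta,\Lambda\,]$ is positive definite on $(p,q)$-covectors with $p+q>n$ for an arbitrary positive $(1,1)$-form $i\Theta$ measured against an arbitrary K\"{a}hler metric. If $\lambda_{1},\dots,\lambda_{n}$ are the eigenvalues of $i\Theta$ relative to $\omega$, the eigenvalue of $[\,i\Theta,\Lambda\,]$ on $dz_{J}\wedge d\bar z_{K}$ is $\sum_{j\in J}\lambda_{j}+\sum_{k\in K}\lambda_{k}-\sum_{j=1}^{n}\lambda_{j}$, which for $n=3$, $J=K=\{1,2\}$ equals $\lambda_{1}+\lambda_{2}-\lambda_{3}$ and can be negative even when all $\lambda_{j}\geq 1$; so neither ``rescaling'' nor ``domination'' saves the estimate. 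The standard and correct move is to take the transverse K\"{a}hler form to \emph{be} $F_{\nabla}$ itself --- which is precisely what the paper's definition of positivity (Definition \ref{Definition : Positivity of E}) provides --- so that all $\lambda_{j}=1$ and $[\,iF_{\nabla},\Lambda\,]=(p+q-n)\id$ exactly.

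The second point is the one you yourself flag as ``the main obstacle'' and then defer: which inner product on $\Omega^{\bullet}_{b}(M/\mathcal{F},E)$ you use matters, and this is where the paper does its real work. With the naive inner product obtained by restricting the de Rham inner product, the formal adjoint of $\overline{\partial}_{E}$ is \emph{not} $-\overline{*}_{b,E^{*}}\,\overline{\partial}_{E}\,\overline{*}_{b,E}$; it acquires a mean curvature term (Proposition 3.6 of Kamber--Tondeur), and this is exactly why naive transplants of vanishing theorems to basic cohomology fail (Min-Oo--Ruh--Tondeur). Without the clean adjoint formula, the Nakano identity and hence the Bochner--Kodaira--Nakano equality do not close up inside the basic complex. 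The paper resolves this by constructing the inner product via the El Kacimi--Hector integration along the fibers of the basic fibration of the orthonormal frame bundle (this is where homological orientability enters, through Lemma \ref{Lemma : Stokes}), with the alternative, noted at the end of Section \ref{Section : Vanishing theorem}, of choosing a minimal metric via Masa's theorem so the mean curvature vanishes. A citation of ``basic Hodge theory'' alone does not discharge this step; you must specify one of these two devices, after which your argument goes through.
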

\noindent In the special case where the leaves of $\mathcal{F}$ are compact, Theorem~\ref{Theorem : Kodaira Akizuki Nakano vanishing theorem} is the Kodaira-Baily vanishing theorem~\cite{Baily}. Theorem~\ref{Theorem : Kodaira Akizuki Nakano vanishing theorem} is proved by the application of the work of El~Kacimi~Alaoui and Hector~\cite{El Kacimi Alaoui Hector} and El~Kacimi~Alaoui~\cite{El Kacimi Alaoui} on transversely elliptic operators on Riemannian foliations. Their theory allows us to reduce the proof to consider differential operators on transversals of foliations, where the classical argument of Nakano-Akizuki~\cite{Akizuki Nakano} for K\"{a}hler manifolds can be applied following Demailly~\cite{Demailly}.

\subsection{Stability of positive Sasakian metrics} 

We obtain the following corollary of Theorems~\ref{Theorem : Stability} and~\ref{Theorem : Vanishing}: Let $V$ be an open neighborhood of~$0$ in $\mathbb{R}^{\ell}$ and $\{(\mathcal{F}^{t},J^{t})\}_{t \in V}$ be a smooth family of transversely holomorphic Riemannian flows on a closed manifold~$M$. Assume that $(\mathcal{F}^{0},J^{0},g_{\nu}^{0})$ is the underlying transversely holomorphic structure of the Reeb flow of a positive Sasakian metric $({g},\eta)$.
\begin{corollary}\label{Corollary : h02 = 0}
There exists an open neighborhood $V_{1}$ of~$0$ in $V$ and a smooth family of Sasakian metrics $\{({g}^{t},\eta^{t})\}_{t \in V_{1}}$ on~$M$ such that ${g}^{t}$ is a Sasakian metric such that the underlying transversely holomorphic flow of the Reeb flow of $({g}^{t},\eta^{t})$ is $(\mathcal{F}^{t},J^{t})$ for every $t$ in $V_{1}$ and $({g}^{0},\eta^{0})=({g},\eta)$.
\end{corollary}
\noindent Corollary~\ref{Corollary : h02 = 0} is deduced from Theorems~\ref{Theorem : Stability} and~\ref{Theorem : Vanishing} as follows: Since the positivity of the anticanonical line bundles of transversely holomorphic foliations is preserved under small deformation, the assumption of the positivity of $({g},\eta)$ implies that $H_{b}^{0,2}(M/\mathcal{F}^{t})=0$ for $t$ in an open neighborhood $V'$ of~$0$ in $V$ by Theorem~\ref{Theorem : Vanishing}. Hence the basic Euler class of $(\mathcal{F}^{t},J^{t})$ is of degree~$(1,1)$ for $t$ in $V'$ by definition. Thus Theorem~\ref{Theorem : Stability} concludes the proof.

\subsection{Stability of $K$-contact structures in families of Riemannian flows}
We obtain a $K$-contact variant of Theorem~\ref{Theorem : Stability}: Let~$M$ be a closed manifold. Let $(g,\eta)$ be a $K$-contact structure on~$M$ (see Definition~\ref{Definition : K cont} for $K$-contact structures). Let $V$ be an open neighborhood of~$0$ in $\mathbb{R}^{\ell}$. Let $\{\mathcal{F}^{t}\}_{t \in V}$ be a smooth family of Riemannian flows on~$M$ such that $\mathcal{F}^{0}$ is the Reeb flow of $\eta$.
\begin{theorem}\label{Theorem : K-contact} There exists an open neighborhood $V_{1}$ of~$0$ in $V$ and a smooth family $\{(g^{t},\eta^{t})\}_{t \in V_{1}}$ of $K$-contact structures on~$M$ such that $\mathcal{F}^{t}$ is the Reeb flow of $\eta^{t}$ for every $t$ in $V_{1}$ and $\eta^{0} = \eta$.
\end{theorem}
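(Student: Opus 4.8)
The plan is to reduce Theorem~\ref{Theorem : K-contact} to the analytic input already behind Theorem~\ref{Theorem : Stability}, exploiting the fact that, in the absence of a transverse complex structure, the only obstruction occurring there---the $(1,1)$-type of the basic Euler class---is vacuous, so the construction becomes unconditional. Write $2n+1=\dim M$ and fix a smooth family of nowhere-zero vector fields $\xi^{t}$ generating $\mathcal{F}^{t}$ with $\xi^{0}$ equal to the Reeb field of $\eta$.

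The first step is an elementary reformulation: a $1$-form $\eta'$ on $M$ underlies a $K$-contact structure whose Reeb orbits induce $\mathcal{F}^{t}$ as soon as $\eta'(\xi^{t})>0$, $\iota_{\xi^{t}}d\eta'=0$ (i.e.\ $d\eta'$ is $\mathcal{F}^{t}$-basic), and $d\eta'$ is transversally nondegenerate. Indeed, such an $\eta'$ is a contact form since $\eta'\wedge(d\eta')^{n}\neq 0$; its Reeb vector field is $\xi^{t}/\eta'(\xi^{t})$, which generates $\mathcal{F}^{t}$; and from $d\eta'$ together with any holonomy-invariant transverse metric (which exists because $\mathcal{F}^{t}$ is Riemannian) one produces, by the usual polar decomposition, an $\mathcal{F}^{t}$-invariant transverse almost complex structure compatible with $d\eta'$, hence an adapted bundle-like metric for which $\xi^{t}$ is Killing, that is, a $K$-contact metric. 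Conversely, the contact form $\eta$ of the given $K$-contact structure is of this type for $\mathcal{F}^{0}$, since $\mathcal{L}_{\xi^{0}}\eta=0$.

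Thus it suffices to produce a smooth family of $1$-forms $\chi^{t}$ with $\chi^{t}(\xi^{t})=1$ and $\mathcal{L}_{\xi^{t}}\chi^{t}=0$ (equivalently, $d\chi^{t}$ is $\mathcal{F}^{t}$-basic) such that $\chi^{0}=\eta$; then $\eta^{t}:=\chi^{t}$ works for $t$ in a neighborhood of $0$, because $d\chi^{0}=d\eta$ is transversally nondegenerate, nondegeneracy is an open condition, and the auxiliary constructions of the previous paragraph can all be performed smoothly in $t$. To obtain $\{\chi^{t}\}$, start from an arbitrary smooth family with $\chi^{t}(\xi^{t})=1$ and $\chi^{0}=\eta$, for instance $\chi^{t}=g^{t}(\xi^{t},\cdot)/g^{t}(\xi^{t},\xi^{t})$ for a smooth family of metrics with $g^{0}=g$, and then average over the structure Lie group of the Riemannian flow $\mathcal{F}^{t}$---the closure of the one-parameter group generated by $\xi^{t}$ inside the isometry group of a bundle-like metric---to enforce $\mathcal{L}_{\xi^{t}}\chi^{t}=0$. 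This averaging preserves the normalization at $t=0$, since $\eta$, being determined by $g$ and $\xi^{0}$, is already invariant under the corresponding torus for $\mathcal{F}^{0}$.

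The main obstacle is exactly the one flagged for Theorem~\ref{Theorem : Stability}: the bundle-like metrics and the structure Lie groups of the flows $\mathcal{F}^{t}$ (and with them the basic de Rham complexes) do not vary continuously in $t$ in any naive way---the dimension of the closure of the flow can jump---so the averaging operator just described is not manifestly smooth in $t$. This is precisely what the deformation theory of Riemannian foliations of \cite{Nozawa} and \cite{Nozawa 2} is designed to handle: it lets one replace the families of basic de Rham complexes by smoothly varying subcomplexes of $\Omega^{\bullet}(M)$ and, in particular, carry out the averaging of $\chi^{t}$ over the structure groups of the $\mathcal{F}^{t}$ in a way that depends smoothly on $t$ and reduces to averaging over that torus at $t=0$. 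Granting this, the required family $\{\chi^{t}\}$ is obtained, and the remaining verifications---contactness, the explicit construction of the $K$-contact metric, and openness of all the conditions involved---are routine. In effect the proof is a simplified version of that of Theorem~\ref{Theorem : Stability}, with the transverse Kähler form replaced by a transverse symplectic one and the $(1,1)$-type condition simply omitted.
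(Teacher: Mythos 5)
Your reduction of the statement to producing a smooth family of $1$-forms $\chi^{t}$ with $\chi^{t}(\xi^{t})=1$, $\iota_{\xi^{t}}d\chi^{t}=0$ and $\chi^{0}=\eta$ is correct, and the passage from such a family to a family of $K$-contact metrics (polar decomposition against the holonomy-invariant transverse metrics, openness of transverse nondegeneracy) is routine. The gap is at the one step that carries the analytic content: you propose to achieve $\iota_{\xi^{t}}d\chi^{t}=0$ by averaging over the closure $\mathbb{T}^{t}$ of the flow of $\xi^{t}$, and you assert that the results of \cite{Nozawa} and \cite{Nozawa 2} make this averaging smooth in $t$. They do not. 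Those references establish the invariance of the \'{A}lvarez class under deformation, i.e.\ that isometricity of $\mathcal{F}^{t}$ persists near $t=0$ (this is Theorem \ref{Theorem : Invariance}, which is indeed needed even to know that $\mathbb{T}^{t}$ is compact for $t\neq 0$); they supply no smoothly varying family of averaging operators. And the naive averaging genuinely fails to be continuous: when $\dim\mathbb{T}^{t}$ jumps (a quasi-regular flow deforming into irregular ones, or conversely), the averages over $\mathbb{T}^{t}$ of a fixed $1$-form need not converge to the average over $\mathbb{T}^{t_{0}}$. This discontinuity of everything attached to the basic complex is exactly the difficulty the paper flags in the introduction, so deferring it to a citation leaves the proof incomplete.

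What the paper actually does at this point is different. Using the splitting $TM=\ker\eta^{t}\oplus T\mathcal{F}^{t}$ one gets a double grading of the full de Rham complex, writes $d=d_{1,0}^{t}+d_{0,1}^{t}+d_{2,-1}^{t}$, and sets $\hat{D}^{t}=d_{1,0}^{t}+d_{0,1}^{t}$; the Laplacian $\Delta_{\hat{D}^{t}}$ is strongly elliptic on $\Omega^{\bullet}(M)$ (not merely on the basic complex), and a characteristic form $\zeta^{t}$ of a suitable minimal metric --- whose construction uses Theorem \ref{Theorem : Invariance}, Carri\`{e}re's lemma and \'{A}lvarez L\'{o}pez's description of the mean curvature --- satisfies $\mathbf{H}^{1}_{\hat{D}}(t)=\mathbf{H}^{1}_{b}(t)\oplus\mathbb{R}\zeta^{t}$. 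The Gysin sequence of the isometric flow shows that $\dim\mathbf{H}^{1}_{b}(t)=\dim H^{1}(M;\mathbb{R})$ is constant, so by Kodaira--Spencer the orthogonal projections onto $\mathbf{H}^{1}_{\hat{D}}(t)$ vary smoothly; projecting any smooth family $\eta^{t}$ with $\eta^{0}=\eta$ then produces the required $\chi^{t}$, whose differential is automatically basic because $\hat{D}$-closedness kills the $(2,-1)$-component. To complete your argument you would either have to prove a smooth-selection statement for the tori $\mathbb{T}^{t}$, which is not available, or replace the averaging step by this elliptic-operator argument.
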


\subsection{Moduli space of Sasakian metrics with a fixed transversely K\"{a}hler flow}\label{Section : Moduli 1}
We describes the difference of moduli spaces of Sasakian metrics and their underlying transversely K\"{a}hler flows. Let~$M$ be a closed manifold. Let $\mathcal{S}$ be the set of Sasakian metrics on~$M$. We assume that $\mathcal{S}$ is nonempty. Let $\mathcal{K}$ be the isomorphism classes of transversely K\"{a}hler flows on~$M$. There exists a natural map $\mathcal{M} \colon \mathcal{S} \to \mathcal{K}$, which maps each Sasakian metric to the isomorphism class of the underlying transversely K\"{a}hler flow. We take a point $\mathfrak{k}_{0}$ on $\mathcal{K}$. Let $\Diff(\mathfrak{k}_{0})$ be the subgroup of $\Diff(M)$ consisting of diffeomorphisms which preserve the transversely K\"{a}hler flow $\mathfrak{k}_{0}$. Let  $\Diff_{0}(\mathfrak{k}_{0})$ be the identity component of $\Diff(\mathfrak{k}_{0})$ with the Fr\'{e}chet topology. Note that $\eta-f^{*}\eta$ is a closed form for $f$ in $\Diff_{0}(\mathfrak{k}_{0})$ because $f$ preserves the transverse K\"{a}hler form $d\eta$. We define
\begin{equation}\label{Equation : Ham}
\Ham(\mathfrak{k}_{0}) = \{ f \in \Diff_{0}(\mathfrak{k}_{0}) \, | \, [\eta-f^{*}\eta] = 0 \in H^{1}(M;\mathbb{R}) \}.
\end{equation}
\noindent Then we have
\begin{theorem}\label{Theorem : H1}
There exists a homeomorphism
\begin{equation*}
\mathcal{M}^{-1}(\mathfrak{k}_{0})/\Ham(\mathfrak{k}_{0}) \longrightarrow H^{1}(M;\mathbb{R}).
\end{equation*}
\end{theorem}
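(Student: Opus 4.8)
The plan is to make $\mathcal{M}^{-1}(\mathfrak{k}_{0})$ concrete as a space of contact $1$-forms, read off the map to $H^{1}(M;\mathbb{R})$ directly, and realise the fibres of that map as $\Ham(\mathfrak{k}_{0})$-orbits by a Moser isotopy along the Reeb direction. Fix a representative $\mathfrak{k}_{0}=(\mathcal{F}_{0},I_{0},g^{T}_{0})$ and a base Sasakian metric $\tilde{g}_{0}\in\mathcal{M}^{-1}(\mathfrak{k}_{0})$ with contact form $\eta_{0}$ and Reeb field $\xi_{0}$, write $2n+1=\dim M$, and read $\mathcal{M}^{-1}(\mathfrak{k}_{0})$ as the set of Sasakian metrics whose underlying transversely K\"{a}hler flow is $\mathfrak{k}_{0}$. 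A Sasakian metric is recovered from its contact form and its transverse K\"{a}hler structure by the usual formula, so $\tilde{g}\mapsto\eta$ is injective on $\mathcal{M}^{-1}(\mathfrak{k}_{0})$; since the transverse K\"{a}hler form is fixed one has $d\eta=d\eta_{0}$, hence $\eta=\eta_{0}+\alpha$ with $\alpha$ closed, and conversely the identity $\eta\wedge(d\eta)^{n}=(1+\iota_{\xi_{0}}\alpha)\,\eta_{0}\wedge(d\eta_{0})^{n}$ shows that $\eta_{0}+\alpha$ is a contact form inducing $\mathfrak{k}_{0}$ exactly when $\iota_{\xi_{0}}\alpha>-1$ everywhere (here one uses the correspondence between Sasakian metrics with a fixed transverse K\"{a}hler structure and compatible contact forms; cf.\ Boyer and Galicki \cite{Boyer Galicki}). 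Thus $\tilde{g}\mapsto\eta-\eta_{0}$ identifies $\mathcal{M}^{-1}(\mathfrak{k}_{0})$, with its $C^{\infty}$-topology, with the convex neighbourhood $\{\alpha:d\alpha=0,\ \iota_{\xi_{0}}\alpha>-1\}$ of $0$ in the space of closed $1$-forms.

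Next I would define $\Theta\colon\mathcal{M}^{-1}(\mathfrak{k}_{0})\to H^{1}(M;\mathbb{R})$ by $\Theta(\tilde{g})=[\eta-\eta_{0}]$, which is continuous. It is surjective: a class $c$ is represented by a closed basic $1$-form $\gamma$ — for instance the $\tilde{g}_{0}$-harmonic representative of $c$, which is basic since harmonic $1$-forms on a compact Sasakian manifold are basic — and then $\iota_{\xi_{0}}\gamma=0$, so $\eta_{0}+\gamma$ is the contact form of some $\tilde{g}_{\gamma}\in\mathcal{M}^{-1}(\mathfrak{k}_{0})$ with $\Theta(\tilde{g}_{\gamma})=c$, and $c\mapsto\tilde{g}_{\gamma}$ is continuous. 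If $f\in\Ham(\mathfrak{k}_{0})$ then $f$ preserves $I_{0}$ and $g^{T}_{0}$, so $f^{*}\tilde{g}\in\mathcal{M}^{-1}(\mathfrak{k}_{0})$ has contact form $f^{*}\eta$, and $\eta-f^{*}\eta$ is exact by definition of $\Ham(\mathfrak{k}_{0})$; hence $\Theta(f^{*}\tilde{g})=\Theta(\tilde{g})$, and $\Theta$ descends to a continuous surjection $\bar{\Theta}\colon\mathcal{M}^{-1}(\mathfrak{k}_{0})/\Ham(\mathfrak{k}_{0})\to H^{1}(M;\mathbb{R})$ with a continuous right inverse. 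It therefore suffices to prove $\bar{\Theta}$ injective, after which it is automatically a homeomorphism.

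For injectivity, let $\tilde{g},\tilde{g}'\in\mathcal{M}^{-1}(\mathfrak{k}_{0})$ with $\Theta(\tilde{g})=\Theta(\tilde{g}')$; their contact forms are $\eta=\eta_{0}+\alpha$ and $\eta'=\eta_{0}+\alpha'$ with $\alpha'-\alpha=dh$ for some $h\in C^{\infty}(M)$. I would run a Moser argument along $\eta_{s}=(1-s)\eta+s\eta'=\eta+s\,dh$, $s\in[0,1]$: since $\iota_{\xi_{0}}\eta_{s}=(1-s)\iota_{\xi_{0}}\eta+s\,\iota_{\xi_{0}}\eta'$ is a convex combination of positive functions, every $\eta_{s}$ is a contact form inducing $\mathfrak{k}_{0}$, with Reeb field $\xi_{s}=(\iota_{\xi_{0}}\eta_{s})^{-1}\xi_{0}$. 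Looking for an isotopy $\psi_{s}$ with $\psi_{0}=\id$ and $\psi_{s}^{*}\eta_{s}=\eta$ and writing its generator as $Y_{s}=a_{s}\xi_{s}+W_{s}$ with $W_{s}\in\ker\eta_{s}$, the Moser equation $\mathcal{L}_{Y_{s}}\eta_{s}=-\dot{\eta}_{s}=-dh$ becomes $d a_{s}+\iota_{W_{s}}d\eta_{s}=-dh$, which is solved by $a_{s}=-h$ and $W_{s}=0$ because $d\eta_{s}=d\eta_{0}$ is nondegenerate on $\ker\eta_{s}$. Thus $Y_{s}=-h\,\xi_{s}$ is a pointwise multiple of $\xi_{0}$, and for any smooth function $\phi$ one has $[\phi\xi_{0},X]\in T\mathcal{F}_{0}$ for every foliate $X$, so the flow of $\phi\xi_{0}$ preserves $\mathcal{F}_{0}$ and acts trivially on $\nu\mathcal{F}_{0}$, hence preserves $g^{T}_{0}$ and $I_{0}$; therefore $\psi_{s}$ is an isotopy inside $\Diff_{0}(\mathfrak{k}_{0})$. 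Setting $f=\psi_{1}^{-1}$ gives $f\in\Diff_{0}(\mathfrak{k}_{0})$ with $f^{*}\eta=\eta'$, so $\eta-f^{*}\eta=\alpha-\alpha'=-dh$ is exact, i.e.\ $f\in\Ham(\mathfrak{k}_{0})$; and $f^{*}\tilde{g}$, having contact form $\eta'$ and transverse K\"{a}hler structure $f^{*}\mathfrak{k}_{0}=\mathfrak{k}_{0}$, equals $\tilde{g}'$. Hence $\bar{\Theta}$ is injective.

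The crux is this injectivity step: the equivalence of two Sasakian metrics with the same invariant must be realised by a diffeomorphism preserving the \emph{whole} transversely K\"{a}hler flow, not just the contact structure, and what makes this possible is that the Moser vector field can be chosen proportional to the Reeb field — using the transverse nondegeneracy of $d\eta$ — so that its flow merely reparametrises $\mathcal{F}_{0}$ along its leaves and is automatically an automorphism of $\mathfrak{k}_{0}$. The auxiliary facts used — that Sasakian metrics with a fixed transverse K\"{a}hler flow are parametrised by compatible contact forms, and that harmonic $1$-forms on a compact Sasakian manifold are basic, equivalently $H^{1}_{b}(M/\mathcal{F}_{0})\cong H^{1}(M;\mathbb{R})$ — are standard consequences of basic Hodge theory but should be recorded carefully, as they also supply the topology needed to pass from a bijection to a homeomorphism.
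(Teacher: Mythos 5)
Your proof is correct, but it takes a genuinely different route from the paper's. The paper works with the slice $\mathcal{S}_{1}$ of contact forms $\eta-\beta$ with $\beta$ basic harmonic and shows that every $\Ham(\mathfrak{k}_{0})$-orbit meets it exactly once (Propositions \ref{Proposition : Intersects} and \ref{Proposition : One point}); this forces it first to match the leafwise components of two contact forms by the leafwise Moser lemma (Lemma \ref{Lemma : Moser}) and to check that the resulting scalar $r$ equals $1$, then to apply the basic Hodge decomposition together with the reparametrized Reeb flow of Lemma \ref{Lemma : Flow} --- which produces a leaf-preserving diffeomorphism only when the function $h$ is basic --- and finally, for uniqueness, to split a Hamiltonian isotopy into an $\eta$-preserving part and a part in $\Diff_{0}(M,\mathcal{F})$. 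You instead parametrize $\mathcal{M}^{-1}(\mathfrak{k}_{0})$ by the convex set of closed $1$-forms $\alpha$ with $\iota_{\xi_{0}}\alpha>-1$, read the invariant $[\eta-\eta_{0}]\in H^{1}(M;\mathbb{R})$ off directly, and absorb an exact difference $dh$ by the single Moser field $-h\,\xi_{s}$ along $\eta_{s}=\eta+s\,dh$. The decisive gain is that, because you use the time-varying Reeb field $\xi_{s}$ of $\eta_{s}$, this field is tangent to $\mathcal{F}_{0}$ --- hence its flow fixes every leaf and all transverse data --- for an \emph{arbitrary} smooth $h$, not only a basic one; this bypasses in one stroke the reduction to basic representatives and the isotopy-splitting of Proposition \ref{Proposition : One point}, and it also makes the topological half of the statement transparent (a continuous bijection with a continuous section built from harmonic representatives). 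Both proofs ultimately rest on the same two facts: flowing along the Reeb direction realizes exact changes of $\eta$ by elements of $\Diff_{0}(M,\mathcal{F})\subset\Ham(\mathfrak{k}_{0})$, and every class in $H^{1}(M;\mathbb{R})$ has a closed representative annihilating $\xi_{0}$, i.e.\ $H^{1}_{b}(M/\mathcal{F}_{0})\to H^{1}(M;\mathbb{R})$ is surjective --- which you obtain from the basicness of harmonic $1$-forms on a compact Sasakian manifold and the paper obtains from the Gysin sequence \eqref{Equation : Gysin for isometric flows} via the nontriviality of the basic Euler class. Do record that last input explicitly, as you propose, since it is exactly where the Sasakian (rather than merely contact) hypothesis enters the surjectivity of your map $\Theta$.
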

\noindent Thus we have the following
\begin{corollary}\label{Corollary : H1 = 0}
Let~$M$ be a closed manifold whose first Betti number is zero. If the underlying transversely K\"{a}hler flows of two Sasakian metrics $(g_{1},\alpha_{1})$ and $(g_{2},\alpha_{2})$ are isomorphic, then $(M,g_{1},\alpha_{1})$ and $(M,g_{2},\alpha_{2})$ are isomorphic.
\end{corollary}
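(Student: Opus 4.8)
The plan is to obtain Corollary \ref{Corollary : H1 = 0} as a direct consequence of Theorem \ref{Theorem : H1}.

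First I would reduce to the case where the two Sasakian metrics have the same underlying transversely K\"{a}hler flow. By hypothesis there is an isomorphism of transversely K\"{a}hler flows between the images $\mathcal{M}(g_1,\alpha_1)$ and $\mathcal{M}(g_2,\alpha_2)$; such an isomorphism is realized by a diffeomorphism $\phi$ of $M$ with $\phi^{*}\mathcal{M}(g_2,\alpha_2) = \mathcal{M}(g_1,\alpha_1)$. Replacing $(g_2,\alpha_2)$ by its pullback $\phi^{*}(g_2,\alpha_2)$, which is again a Sasakian metric, isomorphic to $(g_2,\alpha_2)$ via $\phi$, we may assume that $\mathcal{M}(g_1,\alpha_1)$ and $\mathcal{M}(g_2,\alpha_2)$ coincide; call this common flow $\mathfrak{k}_0 \in \mathcal{K}$. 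Then both $(g_1,\alpha_1)$ and $(g_2,\alpha_2)$ lie in the fiber $\mathcal{M}^{-1}(\mathfrak{k}_0)$, which is in particular nonempty.

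Next I would apply Theorem \ref{Theorem : H1}. Since the first Betti number of $M$ is zero, $H^{1}(M;\mathbb{R}) = 0$, and hence the homeomorphism of Theorem \ref{Theorem : H1} shows that $\mathcal{M}^{-1}(\mathfrak{k}_0)/\Ham(\mathfrak{k}_0)$ is a single point. Therefore $(g_1,\alpha_1)$ and $(g_2,\alpha_2)$ lie in one and the same $\Ham(\mathfrak{k}_0)$-orbit, so there is $f \in \Ham(\mathfrak{k}_0) \subseteq \Diff_0(\mathfrak{k}_0)$ carrying one to the other. As $f$ is a diffeomorphism of $M$ intertwining the two Sasakian structures, it is an isomorphism of Sasakian manifolds; composing with $\phi$ gives the desired isomorphism between the original $(M,g_1,\alpha_1)$ and $(M,g_2,\alpha_2)$.

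No serious difficulty is expected here; the only points to verify are that the action of $\Diff_0(\mathfrak{k}_0)$, and hence of its subgroup $\Ham(\mathfrak{k}_0)$, on $\mathcal{M}^{-1}(\mathfrak{k}_0)$ by pullback sends a Sasakian metric to an isomorphic one (which is immediate from the definition of the action), and that an abstract isomorphism of transversely K\"{a}hler flows indeed arises from a diffeomorphism of $M$ preserving the transverse data, which is what legitimizes the reduction in the first step.
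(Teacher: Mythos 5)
Your argument is correct and is exactly the route the paper intends: the corollary is stated as an immediate consequence of Theorem \ref{Theorem : H1}, namely that $H^{1}(M;\mathbb{R})=0$ forces $\mathcal{M}^{-1}(\mathfrak{k}_{0})/\Ham(\mathfrak{k}_{0})$ to be a single point, so the two Sasakian metrics lie on one $\Ham(\mathfrak{k}_{0})$-orbit and are therefore carried to one another by a diffeomorphism of $M$. Your preliminary reduction (pulling back by the isomorphism of the underlying flows so that both metrics lie in the same fiber of $\mathcal{M}$) is the only step the paper leaves implicit, and you handle it correctly.
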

\noindent Note that $\Diff_{0}(\mathfrak{k}_{0})/\Ham(\mathfrak{k}_{0})$ is an abelian group (see the second paragraph of Section~\ref{Section : Moduli}). Thus $\mathcal{M}^{-1}(\mathfrak{k}_{0})/\Diff_{0}(\mathfrak{k}_{0})$ is a quotient of a vector space by an abelian group in general.

\subsection*{Acknowledgement} 
The author is grateful to Jes\'{u}s~Antonio~\'{A}lvarez~L\'{o}pez for pointing out a mistake in the previous version of this manuscript. It was in the discussion with him during the author stayed in the University of Santiago de Compostela in the spring of 2009. The author would like to express his deep gratitude to Jes\'{u}s~Antonio~\'{A}lvarez~L\'{o}pez also for his invitation, great hospitality and valuable discussion. The author would like to the first referee for a comment on the simplification of the proof of Lemma~\ref{Lemma : duality}.

\section{Sasakian structures and its underlying structures}

\subsection{Sasakian metrics}

Let~$M$ be an odd dimensional smooth manifold. We recall
\begin{definition}\label{Definition : Sasakian metrics}
A pair of a contact form $\eta$ and a Riemannian metric ${g}$ on~$M$ is a {\it Sasakian metric} on~$M$ if the Riemannian metric $r^{2} {g} + dr \otimes dr$ on $M \times \mathbb{R}_{> 0}$ is a K\"{a}hler metric with K\"{a}hler form $d(r^{2} \eta)$, where $r$ is the standard coordinate on $\mathbb{R}_{> 0}$.
\end{definition}
\noindent Basic examples of Sasakian manifolds are circle bundles associated to positive holomorphic line bundles over K\"{a}hler manifolds, links of isolated singularities of complex hypersurfaces defined by weighted homogeneous polynomials and contact toric manifolds of Reeb type (see Boyer and Galicki~\cite{Boyer Galicki 2} and Blair~\cite{Blair}). 

\subsection{Foliations with transverse structures}\label{Section : transverse structures}

The {\it Reeb flow} $\mathcal{F}$ of a Sasakian manifold $(M,\eta,g)$ is the orbit foliation of the flow on~$M$ generated by the Reeb vector field $\xi$ of $\eta$, which is defined by equations $\eta(\xi) = 1$ and $\iota_{\xi}d\eta=0$. In this Section~\ref{Section : transverse structures}, we recall the definition of certain transverse structures of $\mathcal{F}$ which are given by the Sasakian metric.

We denote the tangent bundle of $\mathcal{F}$ by $T\mathcal{F}$. By the integrability of $\mathcal{F}$, the Lie bracket on $C^{\infty}(TM)$ induces the Lie derivative with respect to vector fields tangent to the leaves
\begin{multline*}
C^{\infty} (T\mathcal{F}) \otimes C^{\infty} \Big( (TM/T\mathcal{F})^{\otimes r} \otimes (T^{*}M/T^{*}\mathcal{F})^{\otimes s} \Big) \\ \longrightarrow C^{\infty} \Big( (TM/T\mathcal{F})^{\otimes r} \otimes (T^{*}M/T^{*}\mathcal{F})^{\otimes s} \Big)
\end{multline*}
for every $r \geq 0$ and $s \geq 0$. 
\begin{definition}
A foliation $\mathcal{F}$ of~$M$ with a tensor $g_{\nu}$ in $C^{\infty}((T^{*}M/T^{*}\mathcal{F})^{\otimes 2})$ is called a {\it Riemannian foliation} of~$M$ if
\begin{enumerate}
\item $g_{\nu}$ is symmetric, positive definite and
\item $\mathcal{L}_{Z}g_{\nu}=0$ for every $Z$ in $C^{\infty}(T\mathcal{F})$.
\end{enumerate}
We call such $g_{\nu}$ a {\it transverse metric} of $\mathcal{F}$.
\end{definition}
\begin{example}
A foliation defined by a one parameter family of isometries of a Riemannian manifold is a Riemannian foliation of dimension~$1$.
\end{example}

We recall certain terminology for the definition of transversely holomorphic foliations.
\begin{definition}
A local vector field $X$ of $TM$ is {\it basic} with respect to $\mathcal{F}$ if $[Y,X]$ is tangent to $\mathcal{F}$ for every local vector field $Y$ tangent to $\mathcal{F}$. 
\end{definition}
\noindent It is easy to see that a local vector field $X$ is basic if and only if the flow generated by $X$ maps each leaf of $\mathcal{F}$ to a leaf of $\mathcal{F}$. Let $\pi : TM \to TM/T\mathcal{F}$ be the canonical projection. 
\begin{definition}
A local section $X$ of $TM/T\mathcal{F}$ is {\it transverse} with respect to $\mathcal{F}$ if $\pi(\widetilde{X}) = X$ for some basic local vector field with respect to $\mathcal{F}$.
\end{definition}
\noindent The subspace $\mathcal{X}(\mathcal{F})$ of $C^{\infty}(TM)$ consisting of basic vector fields is a Lie subalgebra of $C^{\infty}(TM)$. The subspace of $C^{\infty}(TM/T\mathcal{F})$ consisting of transverse vector fields admits a Lie bracket $[\cdot,\cdot]$ which satisfies $[\pi(\widetilde{X}),\pi(\widetilde{Y})] = \pi[\widetilde{X},\widetilde{Y}]$ for any basic vector fields $\widetilde{X}$ and $\widetilde{Y}$, because $C^{\infty}(T\mathcal{F})$ is an ideal of $\mathcal{X}(\mathcal{F})$. We recall
\begin{definition}
A foliation $\mathcal{F}$ of~$M$ with a tensor $J$ in $\Aut(TM/T\mathcal{F})$ is called a {\it transversely holomorphic foliation} of~$M$ if
\begin{enumerate}
\item $J^{2} = - \id$, 
\item $\mathcal{L}_{Z}J=0$ for every $Z$ in $C^{\infty}(T\mathcal{F})$ and
\item the Nijenhaus tensor $N_{J}(X,Y)=[JX,JY]-J[JX,Y]-J[X,JY]-[X,Y]$ of $J$ vanishes on any local transverse vector fields $X$ and $Y$ with respect to $\mathcal{F}$.
\end{enumerate}
We call such $J$ a {\it transverse complex structure} of $\mathcal{F}$.
\end{definition}
\begin{example}\label{Example : Fstd}
Let $B^{n}$ be the unit disk of $\mathbb{C}^{n}$. Consider a foliation $\mathcal{F}_{\std}$ on $\mathbb{R}^{m} \times B^{n}$ given by a decomposition $\mathbb{R}^{m} \times B^{n} = \sqcup_{z \in B^{n}} \mathbb{R}^{m} \times \{z\}$. This foliation $\mathcal{F}_{\std}$ has a transverse complex structure $J_{\std}$ is given by the complex structure of $\mathbb{C}^{n}$. We call $(\mathbb{R}^{m} \times B^{n},\mathcal{F}_{\std},J_{\std})$ the {\it standard transversely holomorphic foliation} of $\mathbb{R}^{m} \times B^{n}$.
\end{example}
\begin{example}
The total space of a fiber bundle over a complex manifold has a transversely holomorphic Riemannian foliation whose leaves are fibers.
\end{example}
\noindent A fundamental result of G\'{o}mez Mont~\cite{Gomez Mont} is as follows:
\begin{theorem}\label{Theorem : GM}
Any complex codimension $n$ transversely holomorphic foliation of dimension $m$ is locally isomorphic to $(\mathbb{R}^{m} \times B^{n},\mathcal{F}_{\std},J_{\std})$.
\end{theorem}
\noindent By this result, transversely holomorphic foliations are described in terms of Haefliger~$1$-cocycles (see Section~\ref{Section : Atlas} for such description).

The following is relevant in Sasakian geometry:
\begin{definition}\label{Definition : tr K}
A triple $(\mathcal{F},J,g_{\nu})$ is called a {\it transversely K\"{a}hler foliation} of~$M$ if
\begin{enumerate}
\item $(\mathcal{F},g_{\nu})$ is a Riemannian foliation of~$M$,
\item $(\mathcal{F},J)$ is a transversely holomorphic foliation of~$M$ and
\item the tensor field $\omega$ defined by $\omega(X,Y) = g_{\nu}(X,JY)$ is antisymmetric and closed when regarded as a~$2$-form on~$M$ by the injection $\wedge^{2} (T^{*}M/T^{*}\mathcal{F}) \to \wedge^{2} T^{*}M$.
\end{enumerate}
We call the~$2$-form $\omega$ in (iii) the {\it transverse K\"{a}hler form} of $(\mathcal{F},J,g_{\nu})$.
\end{definition}
\noindent We will regard the transverse K\"{a}hler form $\omega$ of a transversely K\"{a}hler flow as a~$2$-form on~$M$ by the injection $\wedge^{2} (T^{*}M/T^{*}\mathcal{F}) \to \wedge^{2} T^{*}M$ throughout this article. 

Boyer, Galicki and Nakamaye~\cite{Boyer Galicki Nakamaye 2} proved that the Reeb flow of a Sasakian manifold has a transversely K\"{a}hler structure:
\begin{lemma}\label{Lemma : Sasakian to foliations}
The Reeb flow of a Sasakian manifold $(M,\eta,g)$ is transversely K\"{a}hler whose transverse K\"{a}hler form is $d\eta$.
\end{lemma}

\subsection{$K$-contact structures}

Let $\eta$ be a contact form on a closed manifold~$M$ with a Riemannian metric $g$. We recall 
\begin{definition}\label{Definition : K cont}
$(M,\eta,g)$ is called a {\em $K$-contact manifold} if
\begin{enumerate}
\item $g$ is preserved by the Reeb flow of $\eta$ and
\item there exists an almost complex structure $\Phi $ on $\ker \eta$ such that $g(X,Y) = d\eta(X,\Phi Y)$ for $X$, $Y$ in $C^{\infty}(\ker \eta)$.
\end{enumerate}
\end{definition}
\noindent A Sasakian structure is a $K$-contact structure (see Proposition~6.5.12 of \cite{Boyer Galicki}). In particular, the Reeb flow of $\eta$ is Riemannian by the condition (i). The converse is true by an observation of Yamazaki:
\begin{proposition}[Proposition~2.1 of Yamazaki~\cite{Yamazaki}]\label{prop : Yam}
A closed manifold with a contact form whose Reeb flow is a Riemannian flow admits a structure of $K$-contact manifold.
\end{proposition}
The characterization of Sasakian metrics in terms of $K$-contact structure in the last sentence of Section~6.4 of Blair~\cite{Blair} (see also Proposition~6.5.14 of Boyer and Galicki~\cite{Boyer Galicki}) is as follows:
\begin{proposition}\label{prop : CRKS}
A $K$-contact structure is a Sasakian metric if the $CR$-structure $(\ker \eta, \Phi)$ is integrable.
\end{proposition}
By Propositions~\ref{prop : Yam} and~\ref{prop : CRKS}, we get
\begin{lemma}\label{Lemma : CR K}
A pair of an integrable $CR$-structure $(H,\Phi )$ and a contact form $\eta$ determine a Sasakian metric if $H=\ker \eta$ and the Reeb flow of $\eta$ is Riemannian.
\end{lemma}

\subsection{A characterization of Sasakian structures}

We have the following characterization of Sasakian metrics in terms of transversely K\"{a}hler flows and contact forms:
\begin{proposition}\label{Proposition : Transversely Kahler}
A pair of a transversely K\"{a}hler flow $(\mathcal{F},J,g_{\nu})$ and a contact form $\eta$ determines a Sasakian structure on~$M$ if $d\eta = \omega$, where $\omega$ is the transverse K\"{a}hler form of $(\mathcal{F},J,g_{\nu})$.
\end{proposition}

\begin{remark}
Conversely, the contact form and the underlying transversely K\"{a}hler structure of the Reeb flow of a Sasakian manifold satisfy the relation in Proposition~\ref{Proposition : Transversely Kahler}.
\end{remark}

\begin{proof}[Proof of Proposition~\ref{Proposition : Transversely Kahler}]
By Lemma~\ref{Lemma : CR K}, it is suffices to show that a pair of a transversely K\"{a}hler flow $(\mathcal{F},J,g_{\nu})$ and a contact form $\eta$ such that $d\eta = \omega$ determines a pair of a $CR$-structure and a contact form which satisfy the conditions of Lemma~\ref{Lemma : CR K}. Let $H=\ker \eta$ and 
\begin{equation*}
\pi \colon TM \to TM/T\mathcal{F}
\end{equation*}
the canonical projection. Letting $\Phi =(\pi|_{H})^{-1} \circ J \circ (\pi|_{H})$, we have a $CR$-structure $(H,\Phi )$ on~$M$. Clearly the Reeb flow of $\eta$ is Riemannian, because the Reeb flow of $\eta$ is transversely K\"{a}hler by the assumption. Thus the unique nontrivial part of the proof is the integrability of the $CR$-structure $(H,\Phi )$.

By $H=\ker \eta$, we get
\begin{equation*}
\frac{1}{2} \eta([X,Y] - [\Phi X,\Phi Y]) = d\eta (X,Y) - d\eta (\Phi X,\Phi Y)
\end{equation*}
for local sections $X$ and $Y$ of $H$. The right hand side is zero, because $d\eta$ is a transverse K\"{a}hler form, which is $\Phi $-invariant. It follows that $[\Phi X,\Phi Y] - [X,Y]$ is a local section of $H$ for any local sections $X$ and $Y$. This implies the first half of the integrability condition. 

We will show that the Nijenhaus tensor
\begin{equation*}
N_{\Phi}(X,Y) = [\Phi X,\Phi Y] - \Phi ( [\Phi X,Y] + [X,\Phi Y] ) - [X,Y] 
\end{equation*}
vanishes for any local sections $X$ and $Y$ of $H$. Fix a point $x$ on~$M$. Let $(\mathbb{R} \times B^{n},\mathcal{F}_{\std},J_{\std})$ be the standard transversely holomorphic foliation of $\mathbb{R} \times B^{n}$ of dimension~$1$ mentioned in Example~\ref{Example : Fstd}). By a theorem of G\'{o}mez Mont (see Theorem~\ref{Theorem : GM}), we have an open neighborhood $U_{x}$ of $x$ such that $(U_{x},\mathcal{F}|_{U_{x}},J|_{U_{x}})$ is isomorphic to $(\mathbb{R} \times B^{n},\mathcal{F}_{\std},J_{\std})$. We identify $U_{x}$ with $\mathbb{R} \times B^{n}$ by this isomorphism. By the product structure of $\mathbb{R} \times B^{n}$, we can decompose a vector field $X$ on $\mathbb{R} \times B^{n}$ as $X = X_{1} + X_{2}$, where $X_{1}$ is the $T\mathbb{R}$-component and $X_{2}$ is the $TB^{n}$-component. We take a section $X$ of $H|_{U_{x}}$ so that the $TB^{n}$-component $X_{2}$ of $X$ is projectable to a linear vector field on $B^{n}$. We take another section $Y$ of $H|_{U_{x}}$ in the same way. Clearly $X$ and $Y$ are basic vector fields. Let $H'$ be the kernel of the differential map of the first projection $\mathbb{R} \times B^{n} \to \mathbb{R}$. Let $\Phi' =(\pi|_{H'})^{-1} \circ J \circ (\pi|_{H'})$. Since the $TB^{n}$-components of $X$ and $Y$ are linear, we get $[X, \Phi' Y] \in C^{\infty}(T\mathcal{F})$. Since $(\Phi - \Phi') Y \in C^{\infty}(T\mathcal{F})$ and $X$ is basic, we get $[X, (\Phi - \Phi')Y] \in C^{\infty}(T\mathcal{F})$. By $[X, \Phi Y] = [X, (\Phi - \Phi')Y] + [X, \Phi'Y]$, it follows that $[X, \Phi Y] \in C^{\infty}(T\mathcal{F})$. Similarly, $[\Phi X,Y] \in C^{\infty}(T\mathcal{F})$. Thus we have
\begin{equation*}
\Phi ([\Phi X,Y]+[X,\Phi Y]) = 0.
\end{equation*}
Hence we have
\begin{equation*}
N_{\Phi}(X,Y) = [\Phi X,\Phi Y] - [X,Y].
\end{equation*}
Thus, by the last paragraph, $N_{\Phi}(X,Y)$ is a section of $H|_{U_{x}}$. On the other hand, by the integrability of $J$, we have
\begin{align*}
\pi(N_{\Phi}(X,Y)) = N_{J}(\pi(X),\pi(Y)) = 0.
\end{align*}
Thus $N_{\Phi}(X,Y)$ is tangent to $\mathcal{F}$. Since $N_{\Phi}(X,Y)$ is a section of $H|_{U_{x}}$ tangent to $\mathcal{F}$, we have $N_{\Phi}(X,Y)=0$.

The value of $N_{\Phi}(X,Y)$ at $x$ is determined only by vectors $X_{x}$ and $Y_{x}$. Thus the argument above shows $N_{\Phi}(X,Y)=0$ for any local section $X$ and $Y$ of $H$. Hence $(H,\Phi )$ is integrable.
\end{proof}

\subsection{Families of flows with transverse structures}

Let $V$ be an open subset of $\mathbb{R}^{\ell}$. We define smooth families of flows with various transverse structures whose parameter space is $V$ to give the formalism to consider the deformation of flows. Note that our definition of smooth families of flows with transverse structures is equivalent to that of smooth families of deformation given in terms of families of~$1$-cocycles used in Kodaira-Spencer theory~\cite{Kodaira Spencer 2} or deformation theory of transversely holomorphic foliations~\cite{Girbau Haefliger Sundararaman}.

\begin{definition}
\begin{enumerate}
\item A {\it smooth family of flows} on~$M$ over $V$ is a flow $\mathcal{F}^{\amb}$ on a smooth manifold $M \times V$ such that $M \times \{t\}$ is saturated by the leaves of $\mathcal{F}^{\amb}$. We denote the restriction of $\mathcal{F}^{\amb}$ to $M \times \{t\}$ by $\mathcal{F}^{t}$. We will denote such family of flows on~$M$ by $\{\mathcal{F}^{t}\}_{t \in V}$.
\item For a smooth family $\{\mathcal{F}^{t}\}_{t \in V}$ of flows on~$M$, the kernel of the differential map of the second projection $\pr_{2} \colon T(M \times V)/T\mathcal{F}^{\amb} \to TV$ is called the {\it family of normal bundles} of $\{\mathcal{F}^{t}\}_{t \in V}$.
\item A {\it smooth family of Riemannian flows} on~$M$ is a pair of a smooth family of flows $\mathcal{F}^{t}$ and a smooth Riemannian metric $g_{\nu}^{\amb}$ on the family of normal bundles of $\{\mathcal{F}^{t}\}_{t \in V}$ such that $(\mathcal{F}^{t},g^{\amb}|_{M \times \{t\}})$ is a Riemannian flow on~$M$ for each $t$ in $V$.
\item A {\it smooth family of transversely holomorphic flows} on~$M$ is a pair of a smooth family of flows $\{\mathcal{F}^{t}\}_{t \in V}$ and a complex structure $J^{\amb}$ on the family of normal bundles of $\{\mathcal{F}^{t}\}_{t \in V}$ such that $(\mathcal{F}^{t},J^{\amb}|_{M \times \{t\}})$ is a transversely holomorphic flow on~$M$ for each $t$ in $V$.
\end{enumerate}
\end{definition}
\noindent A smooth family of transversely K\"{a}hler flows on~$M$ is similarly defined.

\section{Basic Euler classes of transversely holomorphic isometric flows}

\subsection{Isometric flows}

The Reeb flow of a Sasakian manifold preserves the Riemannian metric as it is well known (see Proposition~6.5.6 of Boyer and Galicki \cite{Boyer Galicki}). We recall
\begin{definition}\label{Definition : Isom}
A Riemannian flow $(\mathcal{F},g_{\nu})$ on~$M$ is called {\it isometric} if there exist a Riemannian metric ${g}$ on~$M$ and a nowhere vanishing vector field $\xi$ tangent to $\mathcal{F}$ such that the flow generated by $\xi$ preserves ${g}$. 
\end{definition}
\noindent We will call $({g},\xi)$ in this definition a {\it Killing pair} on $\mathcal{F}$. A Riemannian metric ${g}$ on~$M$ is called a {\it Killing metric} on $(M,\mathcal{F})$, if there exists a vector field $\xi$ such that $({g},\xi)$ is a Killing pair on $\mathcal{F}$. Note that the terminology ``Killing pair'' and ``Killing metric'' are not standard. We recall also
\begin{definition}\label{Definition : GT}
A foliated manifold $(M,\mathcal{F})$ is {\it geometrically taut} if there exists a Riemannian metric ${g}$ on~$M$ such that every leaf of $\mathcal{F}$ is a minimal submanifold of $(M,g)$. Such $g$ is called a {\it minimal metric} on $(M,\mathcal{F})$.
\end{definition}
\noindent Geometrically tautness of Riemannian foliations was characterized in terms of basic cohomology by a theorem of Masa~\cite{Masa} in general, or a theorem of Molino and Sergiescu~\cite{Molino Sergiescu} in the case of flows. By the following lemma due to Carri\`{e}re~\cite{Carriere}, the isometricity of oriented Riemannian flows is equivalent to the geometrically tautness:
\begin{lemma}\label{Lemma : minimal and Killing}
Let $(M,\mathcal{F})$ be a closed manifold with an oriented Riemannian flow. Then a minimal metric $g$ is Killing if and only if ${g}$ is bundle-like.
\end{lemma}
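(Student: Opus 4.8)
The plan is to reduce both implications to a single symmetric $2$-tensor, the Lie derivative $\mathcal{L}_{\xi}\tilde{g}$, where $\xi$ is the unit vector field spanning $T\mathcal{F}$ compatibly with the orientation; such a global $\xi$ exists because $T\mathcal{F}$ is oriented and $\tilde{g}$ restricts to a metric on it. Writing $\omega=\tilde{g}(\xi,\cdot)$ for the dual $1$-form, so that $\omega(\xi)=1$, I would first record the two standard facts that organize everything: (a) $\tilde{g}$ is bundle-like if and only if $(\mathcal{L}_{\xi}\tilde{g})(X,Y)=0$ for all vector fields $X,Y$ orthogonal to $\xi$ (Reinhart's criterion; for a flow it suffices to test $Z=\xi$ among all $Z$ tangent to $\mathcal{F}$, since $\mathcal{L}_{f\xi}\tilde{g}=f\,\mathcal{L}_{\xi}\tilde{g}+df\otimes\omega+\omega\otimes df$ and the last two terms vanish on $\xi^{\perp}$); and (b) by the Koszul formula, $(\mathcal{L}_{\xi}\tilde{g})(Y,Z)=\tilde{g}(\nabla_{Y}\xi,Z)+\tilde{g}(Y,\nabla_{Z}\xi)$ for the Levi-Civita connection $\nabla$ of $\tilde{g}$. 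The key observation is that minimality of $\tilde{g}$ and bundle-likeness of $\tilde{g}$ are precisely the vanishings of the two complementary pieces of $\mathcal{L}_{\xi}\tilde{g}$: its contraction with $\xi$, and its restriction to $\xi^{\perp}$.

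To prove that a minimal bundle-like metric is Killing, I would note that (b) with $Y=\xi$, together with $2\tilde{g}(\xi,\nabla_{Z}\xi)=Z(|\xi|^{2})=0$, gives $(\mathcal{L}_{\xi}\tilde{g})(\xi,Z)=\tilde{g}(\nabla_{\xi}\xi,Z)$ for every $Z$; thus $\iota_{\xi}\mathcal{L}_{\xi}\tilde{g}$ is exactly the mean-curvature $1$-form $(\nabla_{\xi}\xi)^{\flat}$ of the one-dimensional leaves. Since $\tilde{g}$ is a minimal metric each leaf is a geodesic, so $\nabla_{\xi}\xi=0$ and $\iota_{\xi}\mathcal{L}_{\xi}\tilde{g}=0$; combining this with the bundle-like hypothesis via (a) (and noting there is no $\omega\otimes\omega$ term, because $(\mathcal{L}_{\xi}\tilde{g})(\xi,\xi)=\xi(|\xi|^{2})=0$) forces $\mathcal{L}_{\xi}\tilde{g}=0$. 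Then $\xi$ is a Killing field whose orbits are the leaves of $\mathcal{F}$, so $(\tilde{g},\xi)$ is a Killing pair and $\tilde{g}$ is Killing.

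For the converse I would argue directly, without using minimality: if $(\tilde{g},\xi')$ is a Killing pair, write $\xi'=a\xi$ with $a=\tilde{g}(\xi',\xi)$ nowhere zero. The Leibniz rule for the Lie derivative of a $2$-tensor gives $0=\mathcal{L}_{a\xi}\tilde{g}=a\,\mathcal{L}_{\xi}\tilde{g}+da\otimes\omega+\omega\otimes da$; evaluating on vector fields $X,Y$ orthogonal to $\xi$ kills the last two terms, so $a\,(\mathcal{L}_{\xi}\tilde{g})(X,Y)=0$, whence $(\mathcal{L}_{\xi}\tilde{g})(X,Y)=0$ since $a$ vanishes nowhere. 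By (a) this is exactly bundle-likeness. I expect no real obstacle in this argument: the only substantive inputs are the identity $\iota_{\xi}\mathcal{L}_{\xi}\tilde{g}=(\nabla_{\xi}\xi)^{\flat}$ and the correct statement of Reinhart's criterion (a), and the one point demanding care is checking that the $\xi^{\perp}$-part of $\mathcal{L}_{\xi}\tilde{g}$ really is the full obstruction to being bundle-like, which is clear once the vanishing of its $\xi$-contraction and of its $(\xi,\xi)$-component are in hand.
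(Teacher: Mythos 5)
Your argument is correct. Note that the paper does not actually prove this lemma: it is stated as a quoted result of Carri\`ere, so there is no internal proof to compare against. Your write-up supplies exactly the standard argument behind that citation. The two facts you isolate are both right: the Reinhart-type criterion that bundle-likeness of $\tilde{g}$ is equivalent to the vanishing of $\mathcal{L}_{\xi}\tilde{g}$ on $\xi^{\perp}\times\xi^{\perp}$ (with the reduction from arbitrary $Z=f\xi$ to $Z=\xi$ via $\mathcal{L}_{f\xi}\tilde{g}=f\,\mathcal{L}_{\xi}\tilde{g}+df\otimes\omega+\omega\otimes df$ handled correctly), and the identity $\iota_{\xi}\mathcal{L}_{\xi}\tilde{g}=(\nabla_{\xi}\xi)^{\flat}$ for the unit field $\xi$, which identifies the remaining component of $\mathcal{L}_{\xi}\tilde{g}$ with the mean curvature of the (one-dimensional, hence geodesic when minimal) leaves. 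Together with $(\mathcal{L}_{\xi}\tilde{g})(\xi,\xi)=\xi(|\xi|^{2})=0$ this cleanly decomposes the symmetric tensor $\mathcal{L}_{\xi}\tilde{g}$ into the ``taut'' piece and the ``bundle-like'' piece, which is precisely the mechanism behind Carri\`ere's statement; the orientation hypothesis enters only to produce the global unit section $\xi$ of $T\mathcal{F}$. Your converse direction, showing Killing implies bundle-like without using minimality at all, is also fine and slightly stronger than what the equivalence requires.
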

\noindent Recall that a Riemannian metric $g$ on $(M,\mathcal{F})$ is called {\it bundle-like} if the metric induced on $TM/T\mathcal{F}$ from $g$ through the identification $TM/T\mathcal{F} \cong (T\mathcal{F})^{\perp}$ is a transverse metric of $\mathcal{F}$.

\subsection{$\mathcal{F}$-fibered Hermitian vector bundles and basic Dolbeault cohomology}\label{Section : Atlas}

El~Kacimi~Alaoui \cite{El Kacimi Alaoui} defined an $\mathcal{F}$-fibered Hermitian vector bundle on $(M,\mathcal{F})$ by a Hermitian vector bundle $(E,h)$ on $M$ such that the Chern connection is basic and $\nabla_{X}^{\otimes} h=0$ for any $X$ in $C^{\infty}(T\mathcal{F})$, where $\nabla^{\otimes}$ is the connection induced on $E^{*} \otimes \overline{E}^{*}$. We will give another equivalent definition of $\mathcal{F}$-fibered Hermitian vector bundles on foliated manifolds $(M,\mathcal{F})$ which is convenient in this article. We use $\mathbb{C}$ as the coefficient field of differential forms throughout this Section~\ref{Section : Atlas}.

Let $(\mathcal{F},J)$ be a transversely holomorphic foliation of real dimension $m$ and complex codimension $n$ on a closed manifold~$M$. Let $B^{n}$ be the unit disk in $\mathbb{C}^{n}$. Let $(\mathcal{F}_{\std},J_{\std})$ be the standard transversely holomorphic foliation of $\mathbb{R}^{m} \times B^{n}$ mentioned in Examples~\ref{Example : Fstd}. At each point $x$ on~$M$, we have an open neighborhood $U$ of $x$ such that $(U,\mathcal{F}|_{U},J|_{U})$ is isomorphic to $(\mathbb{R}^{m} \times B^{n},\mathcal{F}_{\std},J_{\std})$ by a theorem of G\'{o}mez Mont (see Theorem~\ref{Theorem : GM}). So we take a covering $\{U_{j}\}$ of~$M$ such that $(U_{j},\mathcal{F}|_{U_{j}},J|_{U_{j}})$ is isomorphic to $(\mathbb{R}^{m} \times B^{n}_{j},\mathcal{F}_{\std},J_{\std})$ as transversely holomorphic foliations, where $B^{n}_{j}$ is the unit ball in $\mathbb{C}^{n}$. We denote the composite
\begin{equation*}
U_{j} \longrightarrow \mathbb{R}^{m} \times B^{n}_{j} \longrightarrow B^{n}_{j}
\end{equation*}
by $\phi_{j}$, where the second map is the second projection.
\begin{definition}\label{Definition : F fibered vector bundle}
A Hermitian vector bundle $(E,h_{E})$ on~$M$ is called $\mathcal{F}$-{\it fibered} if there exists a Hermitian vector bundle $(E_{j},h_{j})$ on $B_{j}^{n}$ such that $(E|_{U_{j}},h_{E}|_{U_{j}}) = (\phi_{j}^{*}E_{j},\phi_{j}^{*}h_{j})$ for every $j$. An $\mathcal{F}$-fibered Hermitian vector bundle is {\it holomorphic} if such $(E_{j},h_{j})$ can be taken so that transition functions $E_{j}|_{\phi_{j}(U_{j} \cap U_{j'})} \to E_{j'}|_{\phi_{j'}(U_{j} \cap U_{j'})}$ are holomorphic on $\phi_{j}(U_{j} \cap U_{j'})$ for each $j$ and $j'$ satisfying $U_{j} \cap U_{j'} \neq \emptyset$.
\end{definition}
\noindent This definition of $\mathcal{F}$-fibered Hermitian vector bundles is equivalent to the original definition of El~Kacimi~Alaoui as follows: 
\begin{lemma}
For a Hermitian vector bundle $(E,h)$, the Chern connection is basic and $\nabla_{X}^{\otimes} h=0$ for any $X$ in $C^{\infty}(T\mathcal{F})$ if and only if $(E,h)$ is $\mathcal{F}$-fibered in the meaning of Definition \ref{Definition : F fibered vector bundle}.
\end{lemma}
\begin{proof}
Let $\pi \colon P \to M$ be the $\GL(r,\mathbb{C})$-principal bundle of $E$, where $r$ is the rank of $E$. By Example 5.2 in page 76 of Kobayashi and Nomizu \cite{Kobayashi Nomizu}, we can identify $h$ with a function $f \colon P \to \mathbb{C}^{r *} \otimes \overline{\mathbb{C}^{r}}^{*}$ such that $f(ug) = \rho(g^{-1}) f(u)$ for any $g$ in $\GL(r,\mathbb{C})$ and any $u$ in $P$, where $\rho \colon \GL(r,\mathbb{C}) \to \GL(\mathbb{C}^{r *} \otimes \overline{\mathbb{C}^{r}}^{*})$ is the canonical action. By Lemma in the page 115 of \cite{Kobayashi Nomizu}, we have $\nabla^{\otimes}_{X}h (\pi(u)) = u(\widetilde{X} f)$ for a point $u$ on $P$, and a vector field $X$, where $u$ is regarded as an isomorphism $\mathbb{C}^{r *} \otimes \overline{\mathbb{C}^{r}}^{*} \to E_{\pi(u)}$ and $\widetilde{X}$ is the horizontal lift of $X$. The formula implies the equivalence.
\end{proof}
  
\begin{example}
$\wedge^{k}(T^{*}M/T^{*}\mathcal{F})^{1,0} \to M$ is a holomorphic $\mathcal{F}$-fibered vector bundle on $(M,\mathcal{F})$.
\end{example}

A foliated manifold has a cohomology called basic cohomology, which can be regarded as the de Rham cohomology of the leaf space in a sense. We refer to Masa~\cite{Masa} or El~Kacimi~Alaoui and Hector~\cite{El Kacimi Alaoui Hector} for the basic cohomology of Riemannian foliations. We will recall the definition of a Dolbeault version of basic cohomology for holomorphic $\mathcal{F}$-fibered vector bundles $E$.
\begin{definition}
A differential form $\alpha$ on~$M$ with values in $E$ is a {\it basic} $(p,q)$-{\it form on} $(M,\mathcal{F})$ {\it with values in} $E$ if there exists a $(p,q)$-form $\alpha_{j}$ on $B^{n}_{j}$ with values in $E_{j}$ such that $\alpha|_{U_{j}} = \phi_{j}^{*}\alpha_{j}$ for every $j$. We denote the space of basic $(p,q)$-forms on $(M,\mathcal{F},J)$ with values in $E$ by $\Omega_{b}^{p,q}(M/\mathcal{F},E)$.
\end{definition}
\noindent This notion are also independent of the foliated atlas $\{U_{j}\}$. In the case where $E$ is the trivial bundle of rank $1$, it is easy to see that the space of basic $k$-form $\Omega^{k}_{b}(M/\mathcal{F})$ is given by
\begin{equation}
\Omega^{k}_{b}(M/\mathcal{F}) = \{ \alpha \in \Omega^{k}(M) \, | \, \iota_{X}\alpha = 0, \iota_{X}d\alpha = 0, \forall X \in C^{\infty}(T\mathcal{F}) \}.
\end{equation}
We also recall
\begin{definition}
A section $s$ of $E$ is {\it basic} if there exists a section $s_{j}$ of $E_{j}$ such that $s|_{U_{j}} = \phi_{j}^{*}s_{j}$.
\end{definition}

Let $(E,h_{E})$ be a holomorphic $\mathcal{F}$-fibered Hermitian vector bundle on $(M,\mathcal{F})$. 
\begin{definition}\label{Definition : Dol}
The {\it basic Dolbeault operator}
\begin{equation*}
\overline{\partial}_{E} \colon \Omega_{b}^{p,q}(M/\mathcal{F},E) \to \Omega_{b}^{p,q+1}(M/\mathcal{F},E)
\end{equation*}
is defined by
\begin{equation}\label{Equation : partial E}
(\overline{\partial}_{E} \alpha)|_{U_{j}} = \phi_{j}^{*} \Big( \sum_{i} \big( \overline{\partial}_{B^{n}} \beta_{i} \big) \otimes s_{i} \Big)
\end{equation}
for $\alpha$ in $\Omega_{b}^{p,q}(M/\mathcal{F},E)$, where $s_{i}$ is a local holomorphic section of $E_{j}$, and $\alpha|_{U_{j}}$ is written as $\alpha|_{U_{j}} = \phi_{j}^{*} (\sum_{i} \beta_{i} \otimes s_{i})$, and $\overline{\partial}_{B^{n}}$ is the Dolbeault operator on $B^{n}$.
\end{definition}
Then $(\Omega_{b}^{p,\bullet}(M/\mathcal{F},E), \overline{\partial}_{E})$ is a differential complex.
\begin{definition}
The $p$-{\it th basic Dolbeault cohomology} $H^{p,\bullet}_{b}(M/\mathcal{F},E)$ {\it of} $(\mathcal{F},J)$ {\it with values in} $E$ is defined by
\begin{equation*}
H^{p,\bullet}_{b}(M/\mathcal{F},E) = H^{\bullet}(\Omega_{b}^{p,\bullet}(M/\mathcal{F},E), \overline{\partial}_{E}).
\end{equation*}
\end{definition}

\begin{remark}\label{Remark : sheaf}
If the leaves of $\mathcal{F}$ are not closed, the sheaf of $(p,q)$-forms with values in $E$ may not be acyclic. Hence we may not have an isomorphism $H^{p,q}_{b}(M/\mathcal{F},E)$ between $H^{q}(M,\Omega^{p}_{b} \otimes E)$, where $\Omega^{p}_{b}$ is the sheaf of holomorphic basic $p$-forms on $(M,\mathcal{F},J)$. Here the situation is different from the case of complex manifolds or complex orbifolds, where we always have $H^{p,q}_{b}(M/\mathcal{F},E) \cong H^{q}(M,\Omega^{p}_{b} \otimes E)$.
\end{remark}

\subsection{Basic Euler classes of isometric flows}

Let $(\mathcal{F},g_{\nu})$ be an isometric Riemannian flow on a closed smooth manifold~$M$. We recall the definition of the basic Euler classes of $(M,\mathcal{F})$ due to Saralegui~\cite{Saralegui}. We denote the space of basic $k$-forms on $(M,\mathcal{F})$ of complex coefficient by $\Omega_{b}^{k}(M/\mathcal{F})_{\mathbb{C}}$. We denote the basic cohomology of $(M,\mathcal{F})$ by $H^{\bullet}_{b}(M/\mathcal{F})$.

\begin{definition}[Saralegui~\cite{Saralegui}]\label{def:bE}
Let $({g},\xi)$ be a Killing pair on $\mathcal{F}$. We define a~$1$-form $\eta$ on~$M$ by $\eta(Y)={g}(\xi,Y)$. Then $d\eta$ is a basic~$2$-form on $(M,\mathcal{F})$. The {\it basic Euler class} of $\mathcal{F}$ is defined by $\mathbb{R}^{\times}[d\eta]$ in $H^{2}_{b}(M/\mathcal{F})$ up to multiplication of nonzero real numbers.
\end{definition}
\noindent Saralegui~\cite{Saralegui} proved that the basic Euler class of $\mathcal{F}$ depends only on the smooth type of the flow $\mathcal{F}$ (see Royo Prieto~\cite{Royo Prieto} for the generalization of the definition of basic Euler classes for Riemannian flows). 

\begin{example}
If $\mathcal{F}$ is an isometric flow defined by fibers of a circle bundle, the basic cohomology of $\mathcal{F}$ coincides with the de Rham cohomology of the base manifold. In this case, the basic Euler class of $\mathcal{F}$ coincides with the Euler class of the circle bundle up to multiplication of real numbers.
\end{example}

\subsection{The~$(0,2)$-component of the basic Euler class of transversely holomorphic flows}\label{sec:HF}

Let~$M$ be a closed smooth manifold. Let $(\mathcal{F},g_{\nu},J)$ be a transversely holomorphic Riemannian flow on~$M$. We assume that $(\mathcal{F},g_{\nu})$ is isometric with a Killing pair $({g},\xi)$. Let $\eta$ be the characteristic form of $(M,\mathcal{F},g)$, which is defined by $\eta(X) = g(\xi,X)$. Then the basic cohomology class of $d\eta$ is the basic Euler class of $(M,\mathcal{F})$ by Definition \ref{def:bE}. Note that the~$(0,2)$-component $(d\eta)^{0,2}$ of $d\eta$ is $\overline{\partial}$-closed, because $\overline{\partial}(d\eta)^{0,2} = (dd\eta)^{0,3}=0$.
\begin{definition} We define the~$(0,2)$-{\it component of the basic Euler class} of $(\mathcal{F},g,J)$ by $\mathbb{R}^{\times}[(d\eta)^{0,2}]$ in $H^{0,2}_{b}(M/\mathcal{F})$ up to multiplication of nonzero real numbers. If the~$(0,2)$-component of the basic Euler class of $(\mathcal{F},g,J)$ is trivial, we say the basic Euler class of $(\mathcal{F},g,J)$ is {\it of degree}~$(1,1)$.
\end{definition}
\noindent We will show the well definedness of the~$(0,2)$-component of the basic Euler class in the following. We will use the leafwise cohomology of $(M,\mathcal{F})$. Let $\Omega^{k}(\mathcal{F})_{\mathbb{C}} = C^{\infty}(\wedge^{k}T^{*}\mathcal{F}) \otimes \mathbb{C}$. The differential on each leaf of $\mathcal{F}$ yields the leafwise differential
\begin{equation*}
d_{\mathcal{F}} : \Omega^{k}(\mathcal{F})_{\mathbb{C}} \to \Omega^{k+1}(\mathcal{F})_{\mathbb{C}}.
\end{equation*}
The cohomology of the differential complex $( \Omega^{\bullet}(\mathcal{F}),d_{\mathcal{F}})$ is called the {\it leafwise cohomology} of $(M,\mathcal{F})$. We denote it by $H^{\bullet}(\mathcal{F})$. 

\begin{remark}
The leafwise cohomology may be of infinite dimension even for linear flows on tori (see El~Kacimi~Alaoui~\cite{El Kacimi Alaoui}).
\end{remark}

The restriction $\eta|_{T\mathcal{F}}$ of a characteristic form $\eta$ of $\mathcal{F}$ to $T\mathcal{F}$ satisfies $d_{\mathcal{F}}(\eta|_{T\mathcal{F}})=0$, thus it determines a leafwise cohomology class $[\eta|_{T\mathcal{F}}]$ in $H^{1}(\mathcal{F})$. We get
\begin{lemma}\label{Lemma : KT}
Let $\eta_{j}$ be the characteristic forms of $(M,\mathcal{F},{g}_{j})$ for a Killing metric ${g}_{j}$ for $j=1$ and~$2$. Then there exists a nonzero real number $r$ such that 
\begin{equation*}
[\eta_{1}|_{T\mathcal{F}}] = r[\eta_{2}|_{T\mathcal{F}}]
\end{equation*}
in $H^{1}(\mathcal{F})$.
\end{lemma}

\begin{proof}
$H^{1}(\mathcal{F})$ can be identified with the $(0,1)$-th $E_{1}$-term $E^{0,1}_{1}(\mathcal{F})$ of the spectral sequence canonically associated to $\mathcal{F}$ (see Kamber and Tondeur~\cite{Kamber Tondeur}). We can identify the $(0,1)$-th $E_{2}$-term $E^{0,1}_{2}(\mathcal{F})$ with a subspace of $E^{0,1}_{1}(\mathcal{F})$, which consists of $d_{1}$-closed elements. By Corollary~4.7 of Kamber and Tondeur~\cite{Kamber Tondeur}, we get $\dim E^{0,1}_{2}(\mathcal{F})=1$ and $E^{0,1}_{2}(\mathcal{F})$ is generated by the leafwise cohomology class of the characteristic form of any minimal metric under these identification. Since $g_{j}$ is minimal by Lemma~\ref{Lemma : minimal and Killing} for $j=1$ and~$2$, we get nonzero real number $r$ such that $[\eta_{1}|_{T\mathcal{F}}] = r[\eta_{2}|_{T\mathcal{F}}]$ in $H^{1}(\mathcal{F})$.
\end{proof}

Let $\Diff(M,\mathcal{F})$ be the subgroup of $\Diff(M)$ consisting of diffeomorphisms which map each leaf of $\mathcal{F}$ to itself. Let $\Diff_{0}(M,\mathcal{F})$ be the identity component of  $\Diff(M,\mathcal{F})$ with respect to the Fr\'{e}chet topology.
\begin{lemma}\label{Lemma : Moser}
Let $\eta_{j}$ be the characteristic form of $(M,\mathcal{F},{g}_{j})$ for a Killing metric ${g}_{j}$ for $j=1$ and~$2$. There exists $r$ in $\mathbb{R}$ and $f$ in $\Diff_{0}(M,\mathcal{F})$ such that
\begin{equation*}
\eta_{1}|_{T\mathcal{F}} = r f^{*}( \eta_{2}|_{T\mathcal{F}}).
\end{equation*}
\end{lemma}
\begin{proof}
By Lemma~\ref{Lemma : KT}, we have a real number $r$ such that $[\eta_{1}|_{T\mathcal{F}}] = r[\eta_{2}|_{T\mathcal{F}}]$ in $H^{1}(\mathcal{F})$. By the leafwise version of the Moser's argument (see Hector, Macias and Saralegui~\cite{Hector Macias Saralegui}), we have a diffeomorphism $f$ of~$M$ in $\Diff_{0}(M,\mathcal{F})$ such that $\eta_{1}|_{T\mathcal{F}} = r f^{*} (\eta_{2}|_{T\mathcal{F}})$.
\end{proof}

\begin{lemma}
The~$(0,2)$-component of the basic Euler class is well-defined for $(\mathcal{F},J)$ up to multiplication of nonzero real numbers.
\end{lemma}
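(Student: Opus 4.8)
The plan is to reduce the statement to the invariance of the full basic Euler class and to exploit that the bigrading of basic forms is compatible with $d$. First, since $(\mathcal{F},I)$ is transversely holomorphic and $I$ is integrable, one has $\Omega_{b}^{k}(M/\mathcal{F})=\bigoplus_{p+q=k}\Omega_{b}^{p,q}(M/\mathcal{F})$ and $d=\partial+\overline{\partial}$ on basic forms. Hence, if $\beta=\beta^{2,0}+\beta^{1,1}+\beta^{0,2}$ is a closed basic $2$-form, then $\overline{\partial}\beta^{0,2}=(d\beta)^{0,3}=0$, and if moreover $\beta=d\gamma$ for a basic $1$-form $\gamma=\gamma^{1,0}+\gamma^{0,1}$, then $\beta^{0,2}=\overline{\partial}\gamma^{0,1}$. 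Therefore $[\beta]\mapsto[\beta^{0,2}]$ is a well-defined linear map $\pi^{0,2}\colon H_{b}^{2}(M/\mathcal{F})\to H_{b}^{0,2}(M/\mathcal{F})$, whose source, target and definition depend only on $(\mathcal{F},I)$. So it suffices to show that the basic Euler class $\mathbb{R}^{\times}[d\eta]\in H_{b}^{2}(M/\mathcal{F})$ does not depend on the choice of the Killing pair $(\tilde{g},\xi)$.

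To see this I would first normalize. From $\mathcal{L}_{\xi}\eta=\iota_{\xi}d\eta+d(\iota_{\xi}\eta)$ and $\mathcal{L}_{\xi}\eta=\mathcal{L}_{\xi}(\iota_{\xi}\tilde{g})=0$, together with the fact that $d\eta$ is basic (hence $\iota_{\xi}d\eta=0$), we get $d(\tilde{g}(\xi,\xi))=0$, so $\tilde{g}(\xi,\xi)$ is a constant; replacing $\xi$ by a constant multiple, which only scales $\eta$ and $d\eta$, we may assume $\iota_{\xi}\eta\equiv 1$, and then $\mathcal{L}_{\xi}\eta=0$. Now take two such normalized Killing pairs $(\tilde{g}_{0},\xi_{0})$ and $(\tilde{g}_{1},\xi_{1})$, with characteristic forms $\eta_{0}$ and $\eta_{1}$. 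If $\xi_{0}=\xi_{1}=:\xi$, the argument is immediate: $\iota_{\xi}(\eta_{0}-\eta_{1})=1-1=0$ and $\mathcal{L}_{\xi}(\eta_{0}-\eta_{1})=0$, so $\eta_{0}-\eta_{1}$ is a basic $1$-form, hence $d\eta_{0}-d\eta_{1}=d(\eta_{0}-\eta_{1})$ is the differential of a basic form and $[d\eta_{0}]=[d\eta_{1}]$ in $H_{b}^{2}(M/\mathcal{F})$; applying $\pi^{0,2}$ gives $[(d\eta_{0})^{0,2}]=[(d\eta_{1})^{0,2}]$.

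In general one only has $\xi_{1}=f\xi_{0}$ for a nowhere-vanishing function $f$, which need not be constant (for instance on $S^{3}$ with the Hopf flow there are Killing pairs whose generators differ by a non-constant factor), so the one-line computation above does not apply, and this is the main obstacle. I would connect $(\tilde{g}_{0},\xi_{0})$ to $(\tilde{g}_{1},\xi_{1})$ by a smooth path $(\tilde{g}_{s},\xi_{s})$ of normalized Killing pairs — keeping each $\tilde{g}_{s}$ bundle-like, which is possible by the discussion around Lemma \ref{Lemma : minimal and Killing} — and differentiate in $s$. Since $\xi_{s}$ stays tangent to the fixed foliation $\mathcal{F}$ we may write $\dot{\xi}_{s}=c_{s}\xi_{s}$, and from $\tfrac{d}{ds}\big(\mathcal{L}_{\xi_{s}}\eta_{s}\big)=0$ and $\tfrac{d}{ds}\big(\tilde{g}_{s}(\xi_{s},\xi_{s})\big)=0$ one has $\tfrac{d}{ds}[d\eta_{s}]=[d\dot{\eta}_{s}]$ in $H_{b}^{2}(M/\mathcal{F})$, with $d\dot{\eta}_{s}$ basic even though $\dot{\eta}_{s}$ need not be; the real content is to show that this class vanishes. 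This last step, and the construction of the path, are exactly where the structure theory of Riemannian flows enters, just as in Saralegui's proof of the invariance of the full basic Euler class in \cite{Saralegui}; concretely one may simply invoke that result. Combining the two cases, $\mathbb{R}^{\times}[d\eta]$ depends only on $\mathcal{F}$, so $\pi^{0,2}$ sends it to a class $\mathbb{R}^{\times}[(d\eta)^{0,2}]\in H_{b}^{0,2}(M/\mathcal{F})$ depending only on $(\mathcal{F},I)$, which is the assertion.
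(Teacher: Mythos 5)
Your argument is correct, but it takes a genuinely different route from the paper's. You reduce the statement to the invariance of the full basic Euler class in $H^{2}_{b}(M/\mathcal{F})$: since $d=\partial+\overline{\partial}$ on basic forms of a transversely holomorphic foliation, the assignment $[\beta]\mapsto[\beta^{0,2}]$ descends to a map $H^{2}_{b}(M/\mathcal{F})\to H^{0,2}_{b}(M/\mathcal{F})$ depending only on $(\mathcal{F},I)$, and you then invoke Saralegui's theorem that $\mathbb{R}^{\times}[d\eta]$ depends only on $\mathcal{F}$. This is a legitimate and very economical deduction, since the paper itself quotes Saralegui's result as established. The paper, however, states explicitly that it proves the lemma ``in a way different from the argument of Saralegui'': it applies the leafwise Moser lemma (Lemma \ref{Lemma : Moser}) to produce $r\in\mathbb{R}^{\times}$ and $f\in\Diff_{0}(M,\mathcal{F})$ with $\eta_{1}|_{T\mathcal{F}}=rf^{*}(\eta_{2}|_{T\mathcal{F}})$; then $\eta_{1}-rf^{*}\eta_{2}$ is a basic $1$-form whose $(0,1)$-part is an explicit $\overline{\partial}$-primitive of $(d\eta_{1})^{0,2}-rf^{*}(d\eta_{2})^{0,2}$, and $f$ acts trivially on basic forms because it preserves each leaf. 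So the paper's proof is self-contained (modulo Lemma \ref{Lemma : Moser}, proved in Section \ref{Section : Moduli}) and exhibits an explicit primitive, while yours is shorter but outsources the essential invariance to \cite{Saralegui}. Two minor caveats: your second and third paragraphs, which sketch a proof of that invariance by deforming Killing pairs, are not needed once you cite \cite{Saralegui} and are incomplete as written (you concede the key vanishing step); and your normalization argument deduces the constancy of $\tilde{g}(\xi,\xi)$ from the basicness of $d\eta$, which is in fact equivalent to it --- harmless here only because the definition of the basic Euler class already asserts that $d\eta$ is basic.
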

\begin{proof}
Let $({g}_{1},\xi_{1})$ and $({g}_{2},\xi_{2})$ be two Killing pairs on $(M,\mathcal{F})$. Let $\eta_{j}$ be the characteristic form of $(M,\mathcal{F},{g}_{j})$ for $j=1$ and~$2$. By Lemma~\ref{Lemma : Moser}, there exist a real number $r$ and a diffeomorphism $f$ in $\Diff_{0}(M,\mathcal{F})$ such that $\eta_{1}|_{T\mathcal{F}} = r f^{*} (\eta_{2}|_{T\mathcal{F}})$. Thus $f_{*} \xi_{1} = r \xi_{2}$. Here $\eta_{1} - r f^{*} \eta_{2}$ is a basic~$1$-form by $(\eta_{1} - r f^{*} \eta_{2})|_{T\mathcal{F}}=0$ and $\mathcal{L}_{\xi_{1}}(\eta_{1} - r f^{*} \eta_{2})=0$. Hence we get $[(d\eta_{1})^{0,2}] = r f^{*}[(d\eta_{2})^{0,2}]$ by
\begin{equation*}
\overline{\partial} \big( (\eta_{1} - r f^{*}\eta_{2})^{0,1} \big) = (d\eta_{1})^{0,2} - r f^{*}(d\eta_{2})^{0,2}.
\end{equation*}
Since $f$ is belongs to $\Diff_{0}(M,\mathcal{F})$, Proposition~II.1.c of El~Kacimi~Alaoui~\cite{El Kacimi Alaoui 2} implies that $f$ trivially acts on the leafwise cohomology. Then $f^{*}[(d\eta_{2})^{0,2}]=[(d\eta_{2})^{0,2}]$. Thus we get $[(d\eta_{1})^{0,2}] = r [(d\eta_{2})^{0,2}]$, which concludes the proof.
\end{proof}

By Lemma~\ref{Lemma : Sasakian to foliations}, the underlying transversely holomorphic Riemannian flow of a Sasakian manifold $(M,g,\eta)$ is transversely K\"{a}hler with transverse K\"{a}hler form $d\eta$. Since a transverse K\"{a}hler form is always of degree~$(1,1)$, we have
\begin{lemma}\label{Lemma : Sasakian case}
The basic Euler classes of the underlying transversely holomorphic isometric Riemannian flows of Sasakian manifolds are of degree~$(1,1)$.
\end{lemma}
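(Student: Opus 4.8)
The plan is to read off the conclusion from the two observations recalled just before the statement: the underlying transversely holomorphic isometric flow $(\mathcal{F}, I, g)$ of a Sasakian manifold $(M, \tilde g, \eta)$ is transversely K\"ahler (Lemma \ref{Lemma : Sasakian to foliations}), and --- once one checks that $\eta$ itself is the characteristic form of the natural Killing pair --- the representative $d\eta$ of the basic Euler class is exactly the transverse K\"ahler form, hence a basic $(1,1)$-form. So the $(0,2)$-component of the basic Euler class vanishes by type reasons.

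Concretely, I would proceed as follows. Let $\xi$ be the Reeb vector field of $\eta$ and let $\mathcal{F}$ be the flow it generates. By Lemma \ref{Lemma : Sasakian to foliations}, $(\tilde g, \xi)$ is a Killing pair on $\mathcal{F}$ and $(\mathcal{F}, I, g)$ is transversely K\"ahler with transverse K\"ahler form $\omega$. First I would verify that the characteristic form $Y \mapsto \tilde g(\xi, Y)$ associated to this Killing pair is $\eta$ itself; this is a standard consequence of Definition \ref{Definition : Sasakian metrics}, obtained by writing the K\"ahler form on the cone as $d(r^{2}\eta) = 2r\, dr \wedge \eta + r^{2}\, d\eta$ and unwinding its compatibility with $r^{2}\tilde g + dr \otimes dr$ along $r = 1$, which simultaneously yields that $d\eta$ restricted to $\ker\eta$ equals $\tilde g(\cdot, I\cdot)$ and is $I$-invariant. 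Thus the basic Euler class of $\mathcal{F}$ is $\mathbb{R}^{\times}[d\eta] \in H^{2}_{b}(M/\mathcal{F})$, and $d\eta$ agrees, under the injection $\wedge^{2}(TM/T\mathcal{F})^{*} \hookrightarrow \wedge^{2} T^{*}M$, with the transverse K\"ahler form $\omega$ --- this is exactly the relation $d\eta = \omega$ appearing in Lemma \ref{Lemma : Transversely Kahler}. Since $\omega$ is a basic $(1,1)$-form, $(d\eta)^{0,2} = 0$, so $\mathbb{R}^{\times}[(d\eta)^{0,2}] = 0$ in $H^{0,2}_{b}(M/\mathcal{F})$, which is precisely the assertion that the basic Euler class is of degree $(1,1)$.

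The argument is essentially formal once the transversely K\"ahler structure furnished by Lemma \ref{Lemma : Sasakian to foliations} is in hand; there is no real analytic content. The only point requiring a little care --- the closest thing to an obstacle --- is bookkeeping: one must carry out the identification of $\eta$ with the characteristic form and of $d\eta$ with $\omega$ with respect to the \emph{same} Killing pair $(\tilde g, \xi)$ and the \emph{same} transverse complex structure $I$, so that the $(0,2)$-projection is genuinely being computed in the cohomology group $H^{0,2}_{b}(M/\mathcal{F})$ attached to that structure.
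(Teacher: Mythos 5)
Your proposal is correct and follows essentially the same route as the paper: the paper's own justification is the two-sentence remark preceding the lemma, namely that the underlying flow is transversely K\"ahler (via Lemma \ref{Lemma : Transversely Kahler}) and that the basic Euler class is represented by the transverse K\"ahler form $d\eta$, which is of type $(1,1)$, so its $(0,2)$-component vanishes. Your additional bookkeeping --- checking that $\eta$ is the characteristic form of the Killing pair $(\tilde g,\xi)$ and that $d\eta=\omega$ for the same $I$ --- is exactly the detail the paper leaves implicit.
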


\section{Existence of extensions of Sasakian metrics}

\subsection{A differential operator $D$}\label{sec:D}

Let $(M,\mathcal{F},J,g_{\nu})$ be a closed manifold with a transversely holomorphic Riemannian flow. We extend $g_{\nu}$ to a $J$-invariant Hermitian metric on $(TM/T\mathcal{F}) \otimes \mathbb{C}$. We introduce a triple grading of $\Omega^{\bullet}(M)_{\mathbb{C}} = \Omega^{\bullet}(M) \otimes \mathbb{C}$, which is known for researchers of CR geometry. The triple grading is the complex version of the double grading mentioned in Section~\ref{sec : kappa}, which is well known for researchers of foliation theory. Then we will introduce a differential operator $D$ to use later in the proof of Theorem~\ref{Theorem : Stability}.

Let $({g},\xi)$ be a Killing pair on $\mathcal{F}$ such that the transverse metric on $TM/T\mathcal{F}$ induced from ${g}$ is equal to $g_{\nu}$. Let $\eta$ be the characteristic form of $(M,\mathcal{F},{g})$ defined by $\eta(X) = g(\xi,X)$. Let $H = \ker \eta$ and denote the orthogonal projection $TM \to H$ by $\pi$. We have a $CR$-structure $(H,\Phi )$ defined by $\Phi =(\pi|_{H})^{-1} \circ J \circ (\pi|_{H})$. Let $H^{0,1}$ (resp. $H^{1,0}$) be the vector subbundle of $H \otimes \mathbb{C}$ whose fibers are the $(-i)$-eigenspaces (resp. $i$-eigenspaces) of $\Phi $.

The coefficient ring of differential forms is $\mathbb{C}$ in this Section~\ref{sec:D}. By the decomposition $TM \otimes \mathbb{C} = H^{1,0} \oplus H^{0,1} \oplus (T\mathcal{F} \otimes \mathbb{C})$, we define a triple grading $\Omega^{\bullet}(M)_{\mathbb{C}} = \Omega_{h,j,k}$, where
\begin{equation}\label{Equation : t1}
\Omega_{h,j,k}=C^{\infty}(\wedge^{h} (H^{1,0})^{*}\otimes \wedge^{j} (H^{0,1})^{*} \otimes \wedge^{k} (T\mathcal{F} \otimes \mathbb{C})^{*}).
\end{equation}
By the integrability of $\mathcal{F}$, we can decompose the differential $d$ as
\begin{equation}\label{Equation : t2}
d = d_{0,0,1} + d_{0,1,0} + d_{1,0,0} + d_{0,2,-1} + d_{1,1,-1} + d_{2,0,-1},
\end{equation} 
where $d_{h',j',k'}$ is the $(\Omega_{h+h',j+j',k+k'})$-component of $d$ on $\Omega_{h,j,k}$. Note that the space of basic $(p,q)$-forms is embedded to $\Omega_{p,q,0}$. We define a differential operator 
\begin{equation}\label{Equation : t3}
D = d - d_{1,1,-1}.
\end{equation}
Let $D^{*}$ be the formal adjoint of $D$ with respect to the inner product on $\Omega^{\bullet}(M)_{\mathbb{C}}$ defined by $\langle \alpha_{1},\alpha_{2} \rangle = \int_{M} \alpha_{1} \wedge \overline{*} \alpha_{2}$ for $\alpha_{1}$ and $\alpha_{2}$ in $\Omega^{k}(M)_{\mathbb{C}}$. Let 
\begin{equation}\label{Equation : t4}
\Delta_{D} = D D^{*} + D^{*} D 
\end{equation}
and 
\begin{align*}
\mathbf{H}^{k}_{D} = \ker \{ \Delta_{D} \colon \Omega^{k}(M)_{\mathbb{C}} \to \Omega^{k}(M)_{\mathbb{C}}\}, \,\,\,\,
\mathbf{H}^{k} = \ker \{ \Delta \colon \Omega^{k}(M)_{\mathbb{C}} \to \Omega^{k}(M)_{\mathbb{C}}\},
\end{align*}
where $\Delta=dd^{*} + d^{*}d$.

We collect fundamental properties of $D$ as follows:
\begin{lemma}\label{lem:D}
\begin{enumerate}
\item $\Delta_{D}$ is a self-adjoint strongly elliptic operator.
\item $D^{*} = - \overline{*} D \overline{*}$.
\item Any $d$-closed form is $D$-closed, and any $d^{*}$-closed form is $D^{*}$-closed.
\item $\mathbf{H}^{k} \subseteq \mathbf{H}^{k}_{D}$.
\item $\mathbf{H}^{k}_{D} = \ker D \cap \ker D^{*}$.
\end{enumerate}
\end{lemma}

\begin{proof}
For $\alpha$ in $\Omega_{h,j,1}$, we have
\begin{multline*}
d_{1,1,-1}\alpha (X_{1}, \ldots, X_{h+1}, Y_{1}, \ldots, Y_{j+1}) = \\
\sum_{s < t} (-1)^{s(h+1+t)} \alpha( [X_{s},Y_{t}], X_{1}, \ldots, \widehat{X_{s}}, \ldots, X_{h+1}, Y_{1}, \ldots, \widehat{Y_{t}}, \ldots, Y_{j+1})
\end{multline*}
for $X_{1}$, $\ldots$, $X_{h+1}$ in $C^{\infty}(H^{1,0})$ and $Y_{1}$, $\ldots$, $Y_{j+1}$ in $C^{\infty}(H^{0,1})$. Thus $d_{1,1,-1}$ satisfies $d_{1,1,-1}(f\alpha) = f d_{1,1,-1}\alpha$ for any smooth function $f$ on~$M$, which means that $d_{1,1,-1}$ is a differential operator of degree~$0$. This implies that the symbol of $\Delta_{D}$ is equal to the symbol of $\Delta$. Thus (i) is proved. We have (ii) by integrating both sides of
\begin{equation*}
d(\alpha_{1} \wedge \overline{*} \alpha_{2}) = D \alpha_{1} \wedge \overline{*} \alpha_{2} + (-1)^{k} \alpha_{1} \wedge D \overline{*} \alpha_{2}
\end{equation*}
on~$M$ for $\alpha_{1}$ in $\Omega_{h,j,k}$ and $\alpha_{2}$ in $\Omega_{h-1,j,k}$, $\Omega_{h,j-1,k}$ or $\Omega_{h,j,k-1}$. Note that $\overline{*}(\Omega_{j,k,h}) \subseteq \Omega_{n-j,n-k,1-h}$, where $2n$ is the real codimension of $\mathcal{F}$. By $\overline{*}(\Omega_{j,k,h}) \subseteq \Omega_{n-j,n-k,1-h}$ and the definition of $D$, we get (iv). By definition of $\Delta_{D}$, it follows that $\ker D \cap \ker D^{*} \subseteq \mathbf{H}^{k}_{D}$. Conversely $\mathbf{H}^{k}_{D} \subseteq \ker D \cap \ker D^{*}$ by the equality $\langle \Delta_{D}\alpha,\alpha \rangle = \langle D\alpha , D\alpha \rangle + \langle D^{*}\alpha, D^{*}\alpha \rangle$. Thus (v) is proved.
\end{proof}

\begin{lemma}\label{Lemma : H 1}
Assume that $(\eta,{g})$ is a Sasakian metric on~$M$. Then we have an equality
\begin{equation*}
\mathbf{H}^{1}_{D} = \mathbf{H}^{1} \oplus \mathbb{C} \eta
\end{equation*}
in $\Omega^{1}(M)_{\mathbb{C}}$.
\end{lemma}

\begin{proof}
We show $\mathbf{H}^{1} \oplus \mathbb{C} \eta \subseteq \mathbf{H}^{1}_{D}$. Since $d\eta$ is a basic~$(1,1)$-form on $(M,\mathcal{F})$, we have $d\eta = d_{1,1,-1}\eta$. Thus
\begin{equation}\label{Equation : d eta}
D\eta = 0.
\end{equation}
Since $\overline{*} \eta$ is a basic volume form on $(M,\mathcal{F})$ by Equation 2.8 of Kamber and Tondeur~\cite{Kamber Tondeur}, we have $d \overline{*} \eta = 0$. It follows that
\begin{equation}\label{Equation d * eta}
D^{*} \eta = 0.
\end{equation}
Hence $\eta$ is $\Delta_{D}$-harmonic. We get $\mathbf{H}^{1} \subseteq \mathbf{H}^{1}_{D}$ by Lemma~\ref{lem:D} (iv). Since $\eta$ is not $d$-closed, $\eta$ is not contained in $\mathbf{H}^{1}$. Thus $\mathbf{H}^{1} \oplus \mathbb{C} \eta \subseteq \mathbf{H}^{1}_{D}$.

We show $\mathbf{H}^{1}_{D} \subseteq \mathbf{H}^{1} \oplus \mathbb{C} \eta$. Let $\alpha$ be a $\Delta_{D}$-harmonic~$1$-form. By Lemma~\ref{lem:D} (v), we get $D\alpha=0$ and $D^{*}\alpha=0$. Let $\{\varphi_{u}\}_{u \in \mathbb{R}}$ be the flow generated by the Reeb vector field $\xi$ of $\alpha$. Since $\varphi_{u}$ preserves any of $J$, $H$ and ${g}$, we have 
\begin{equation*}
\Delta_{D} \varphi_{u}^{*} = \varphi_{u}^{*} \Delta_{D}.
\end{equation*}
Thus $\varphi_{u}^{*}$ preserves $\mathbf{H}_{D}^{1}$. Since $\iota_{\xi}$ is the zero map on $\Omega_{j,k,0}$, we have $\iota_{\xi}d_{1,1,-1}\alpha=0$. It follows that $\iota_{\xi}d\alpha = \iota_{\xi} D\alpha = 0$. Then
\begin{equation}\label{Equation : L xi alpha}
\mathcal{L}_{\xi}\alpha = d(\alpha(\xi)).
\end{equation}
By~\eqref{Equation : L xi alpha}, we have
\begin{equation*}
\varphi_{u}^{*}\alpha - \alpha = \int_{0}^{u} \frac{d}{dr}\Big|_{r=s} \big( \varphi_{r}^{*}\alpha \big) ds = \int_{0}^{u} \varphi_{s}^{*} (\mathcal{L}_{\xi} \alpha) ds = d \left( \int_{0}^{u} \varphi_{s}^{*} \alpha(\xi) ds \right),
\end{equation*}
which implies that $\varphi_{u}^{*}\alpha - \alpha$ is an exact~$1$-form. Since $d_{1,1,-1}\overline{*}(\varphi_{u}^{*}\alpha - \alpha)=0$ because of the degree, we have
\begin{equation*}
d^{*}(\varphi_{u}^{*}\alpha - \alpha) = D^{*}(\varphi_{u}^{*}\alpha - \alpha) = 0.
\end{equation*}
By the Hodge decomposition of $\Omega^{1}(M)_{\mathbb{C}}$, a $d$-exact and $d^{*}$-closed form is zero. Hence 
\begin{equation*}
\varphi_{u}^{*}\alpha - \alpha = 0.
\end{equation*}
Thus $\alpha$ is invariant under the flow generated by $\xi$, which implies $\alpha(\xi)$ is a leafwise constant function on $(M,\mathcal{F})$. Letting $\beta = \alpha - \alpha(\xi) \eta$ and $f=\alpha(\xi)$, we decompose $\alpha$ as
\begin{equation}\label{Equation : decomposition alpha}
\alpha = \beta + f \eta.
\end{equation}
Here $\beta$ is a basic~$1$-form on $(M,\mathcal{F})$, because $\varphi_{u}^{*}\beta - \beta = 0$ and satisfies $\beta(\xi) =0$. By differentiating both sides of~\eqref{Equation : decomposition alpha} and operating $\iota_{\xi}$, we have 
\begin{equation*}
df = 0.
\end{equation*}
Thus $f$ is a constant. Since $D=d$ on $\Omega_{j,k,0}$ by definition, we have
\begin{equation*}
0  = D\alpha = D(\beta + f\eta) = d\beta + fD\eta.
\end{equation*}
By~\eqref{Equation : d eta}, we have $d\beta=0$. We have
\begin{equation*}
0 = D^{*}\alpha = - \overline{*} D \overline{*} \beta + fD^{*}\eta = - \overline{*} d \overline{*} \beta + fD^{*}\eta,
\end{equation*}
because $D=d$ on $\oplus_{j+k=2n-1} \Omega_{j,k,1}$, where $2n$ is the real codimension of $\mathcal{F}$. By~\eqref{Equation d * eta}, we have $d^{*}\beta=0$. Hence $\beta$ is a harmonic~$1$-form. Thus we get $\mathbf{H}^{1}_{D} \subseteq \mathbf{H}^{1} \oplus \mathbb{C} \eta$.
\end{proof}

\subsection{Mean curvature forms of Riemannian flows}\label{sec : kappa}

We will make an introduction to the description of mean curvature forms of Riemannian foliations by \'{A}lvarez L\'{o}pez~\cite{Alvarez Lopez} in the special case of Riemannian flows to prove a preliminary result necessary in the proof of Theorem~\ref{Theorem : Stability} in the next section. Let $(M,\mathcal{F})$ be a flow. Let $g$ be a Riemannian metric on~$M$. We recall
\begin{definition}
The {\it mean curvature form} of $(M,\mathcal{F},g)$ is a~$1$-form $\kappa$  on~$M$ defined by
\begin{equation*}
\kappa(X) = g(\xi ,\nabla_{\xi} X)
\end{equation*}
for $X$ in $C^{\infty}(TM)$, where $\nabla$ is the Levi-Civita connection of $(M,g)$ and $\xi$ is a vector field tangent to $\mathcal{F}$ such that $g(\xi,\xi)=1$.
\end{definition}
\noindent For a point $x$ on~$M$, the~$1$-form $\kappa_{x}$ is the mean curvature form of the leaf of $\mathcal{F}$ which goes through $x$. Thus $\kappa = 0$ if and only if every leaf of $\mathcal{F}$ is a geodesic with respect to $g$ under the length parametrization. 

Let $\eta$ be the characteristic form of $(M,\mathcal{F},g)$, which is defined by $\eta(X) = g(\xi,X)$. Then the Rummler's formula (see the second formula in the proof of Proposition~1 in Rummler~\cite{Rummler} or Lemma~10.5.6 of Candel and Conlon~\cite{Candel Conlon}) gives
\begin{equation}\label{eq : Rummler}
\kappa(X) = d\eta(\xi,X)
\end{equation}
for $X$ in $C^{\infty}(TM)$. Thus $\kappa$ is determined only by $g|_{T\mathcal{F} \otimes T\mathcal{F}}$ and the orthogonal plane field $(T\mathcal{F})^{\perp}$, because $\eta$ is clearly determined only by $g|_{T\mathcal{F} \otimes T\mathcal{F}}$ and $(T\mathcal{F})^{\perp}$. 

In the case where $\mathcal{F}$ is Riemannian and $g$ is bundle-like, \'{A}lvarez L\'{o}pez~\cite{Alvarez Lopez} calculated the change of $\kappa$ when $g|_{T\mathcal{F} \otimes T\mathcal{F}}$ and  $(T\mathcal{F})^{\perp}$ are changed. To state his calculation, we introduce a well known double grading of $\Omega^{\bullet}(M)$. In this Section~\ref{sec : kappa}, we consider only real differential forms. Let $Q=(T\mathcal{F})^{\perp}$ and
\begin{equation*}
A_{j,k}=C^{\infty}(\wedge^{j} Q^{*} \otimes \wedge^{k} T^{*}\mathcal{F}).
\end{equation*}
Then $\Omega^{\bullet}(M)$ is decomposed as
\begin{equation*}
\Omega^{\bullet}(M)=\oplus_{j,k} A_{j,k}.
\end{equation*}
By the integrability of $T\mathcal{F}$, we see that the differential $d$ is decomposed as
\begin{equation}\label{eq:decompositionofd}
d=d_{0,1}+d_{1,0}+d_{2,-1},
\end{equation}
where the subscripts correspond to the double grading of $\Omega^{\bullet}(M)$. Since $g$ is bundle-like, the formal adjoint $d^{*}$ of $d$ is decomposed as 
\begin{equation}\label{eq : d*}
d^{*}=d_{0,-1}^{*}+d_{-1,0}^{*}+d_{-2,1}^{*}
\end{equation}
by the double grading of $\Omega^{\bullet}(M)$ in a similar way. 

Let $g_{1}$ be another bundle-like metric on $(M,\mathcal{F})$ with characteristic form $\eta_{1}$. Let $\kappa_{1}$ be the mean curvature form of $(M,\mathcal{F},g_{1})$. Let $Q_{1}$ be the orthogonal plane field of $T\mathcal{F}$ with respect to $g_{1}$. If $Q=Q_{1}$, then there exists a smooth function $f$ on $M$ such that $\eta = e^{f}\eta_{1}$. Then, by the Rummler's formula~\eqref{eq : Rummler}, we get
\begin{proposition}[Equation~5.3 of~\'{A}lvarez L\'{o}pez~\cite{Alvarez Lopez}]\label{prop:Alvarez Lopez1}
If $Q=Q_{1}$, then
\begin{equation}\label{eq:Alvarez Lopez1}
\kappa_{1} - \kappa = d_{1,0}f
\end{equation}
for some $f$ in $A_{0,0}$. Conversely, for any $f$ in $A_{0,0}$, there exists a bundle-like metric $g_{1}$ on $(M,\mathcal{F})$ such that $Q_{1}=Q$ and~\eqref{eq:Alvarez Lopez1} is satisfied. Moreover if $f$ is $C^{\infty}$-close to~$0$, then we can take such $g_{1}$ so that $g_{1}$ is $C^{\infty}$-close to $g$.
\end{proposition}
\noindent \'{A}lvarez L\'{o}pez calculated the effect of the change of $Q_{1}$ to $\kappa_{1}$ in terms of $d_{0,-1}^{*}$ as follows:
\begin{proposition}[Proposition~4.3 of~\'{A}lvarez L\'{o}pez~\cite{Alvarez Lopez}]\label{prop:Alvarez Lopez2}
If $g_{1}|_{T\mathcal{F} \otimes T\mathcal{F}} = g|_{T\mathcal{F} \otimes T\mathcal{F}}$, then 
\begin{equation}\label{eq:Alvarez Lopez2}
\kappa_{1} - \kappa = d_{0,-1}^{*}\gamma
\end{equation}
for some $\gamma$ in $A_{1,1}$. Conversely, for any $\gamma$ in $A_{1,1}$, there exists a bundle-like metric $g_{1}$ on $(M,\mathcal{F})$ such that $g_{1}|_{T\mathcal{F} \otimes T\mathcal{F}} = g|_{T\mathcal{F} \otimes T\mathcal{F}}$ and~\eqref{eq:Alvarez Lopez2} is satisfied. Moreover, if $\gamma$ is $C^{\infty}$-close to $0$, then we can take such $g_{1}$ so that $g_{1}$ is $C^{\infty}$-close to $g$.
\end{proposition}
\noindent The last sentence of Proposition~\ref{prop:Alvarez Lopez2} follows from the proof of Proposition~4.3 of~\'{A}lvarez L\'{o}pez~\cite{Alvarez Lopez}.

Let $(M,\mathcal{F})$ be a geometrically taut Riemannian flow. Let $g$ be a bundle-like metric on~$M$. Let $*_{\mathcal{F}} : A_{0,1} \to A_{0,0}$ and $*_{\mathcal{F}} : A_{0,0} \to A_{0,1}$ be the leafwise Hodge star operators with respect to $g|_{T\mathcal{F} \otimes T\mathcal{F}}$. We extend $*_{\mathcal{F}}$ on $\Omega^{\bullet}(M)$ by the double grading of $\Omega^{\bullet}(M)$ so that $*_{\mathcal{F}}$ trivially acts on $Q^{*}$. Thus $*_{\mathcal{F}}(A_{k,1}) = A_{k,0}$ and $*_{\mathcal{F}}(A_{k,0}) = A_{k,1}$. We show
\begin{lemma}\label{Lemma : d0-1}
The restriction of $d_{0,-1}^{*} \colon A_{1,1} \to A_{1,0}$ to $\overline{d_{0,1}(A_{1,0})}$ is a bijection $\overline{d_{0,1}(A_{1,0})} \to \overline{d_{0,-1}^{*}(A_{1,1})}$.
\end{lemma}
\begin{proof}
By Theorem~2.1 of~\cite{Alvarez Lopez}, we get
\begin{equation}\label{eq:A10}
A_{1,0} =  \Omega_{b}^{1}(M/\mathcal{F}) \oplus \overline{d_{0,-1}^{*}(A_{1,1})}.
\end{equation}
By operating $*_{\mathcal{F}}$ to both sides of~\eqref{eq:A10}, we get 
\begin{equation}\label{eq:A11}
A_{1,1} =  \big( \Omega_{b}^{1}(M/\mathcal{F}) \wedge \eta \big) \oplus \overline{d_{0,1}(A_{1,0})}.
\end{equation}
The definition of basic forms implies that the kernel of $d_{0,1} : A_{1,0} \to A_{1,1}$ is the space $\Omega_{b}^{1}(M/\mathcal{F})$ of basic $1$-forms on $(M,\mathcal{F})$. By Equation~1.13 of~\cite{Alvarez Lopez}, we get $d_{0,-1}^{*} = - *_{\mathcal{F}} d_{0,1} *_{\mathcal{F}}$ on $A_{1,1}$. Thus
\begin{equation}\label{eq:kerd0-1}
\ker d_{0,-1}^{*} = \Omega_{b}^{1}(M/\mathcal{F}) \wedge \eta.
\end{equation}
The proof is concluded by~\eqref{eq:A10},~\eqref{eq:A11} and~\eqref{eq:kerd0-1}.
\end{proof}

We will use the leafwise cohomology $H^{\bullet}(\mathcal{F})$ in the proof of the following lemma (see Section~\ref{sec:HF}). As in the proof of Lemma \ref{Lemma : KT}, we will regard the $(0,1)$-th $E_{2}$-term $E^{0,1}_{2}(\mathcal{F})$ as a subspace of $H^{1}(\mathcal{F})$.
\begin{proposition}\label{Proposition : g_1}
If $\kappa$ is $C^{\infty}$-close to~$0$, then there exists a Killing metric $g_{1}$ on $(M,\mathcal{F})$ which is $C^{\infty}$-close to $g$.
\end{proposition}

\begin{proof}
First we show that there exists a bundle-like metric $g_{2}$ on $(M,\mathcal{F})$ with characteristic form $\eta_{2}$ such that $g_{2}$ is $C^{\infty}$-close to $g$ and $\eta_{2}|_{T\mathcal{F}}$ is the restriction of the characteristic form of a minimal bundle-like metric on $(M,\mathcal{F})$. By definition, there exists a minimal metric $g_{3}$ on $(M,\mathcal{F})$. Let $\eta_{3}$ be the characteristic form of $(M,\mathcal{F},g_{3})$. We will consider the leafwise cohomology classes $[\eta|_{T\mathcal{F}}]$ and $[\eta_{3}|_{T\mathcal{F}}]$ in $H^{1}(\mathcal{F})$. Here $E^{0,1}_{2}(\mathcal{F})$ is generated by $[\eta_{3}|_{T\mathcal{F}}]$ by Corollary~4.7 of Kamber and Tondeur~\cite{Kamber Tondeur}.
 Let $\overline{0}$ be the closure of $0$ in $H^{1}(\mathcal{F})$, which may be nontrivial because $H^{1}(\mathcal{F})$ may be non-Hausdorff. Since $(M,\mathcal{F})$ is geometrically taut, Proposition~3 of \'{A}lvarez L\'{o}pez~\cite{Alvarez Lopez 2} implies that $E^{1,0}_{2}(\mathcal{F}) \to H^{1}(\mathcal{F})/\overline{0}$ is surjective. Thus there exists $\varsigma$ in $C^{\infty}(T^{*}\mathcal{F})$ such that $\varsigma$ is $C^{\infty}$-close to $0$ and $[\eta|_{T\mathcal{F}}] + [\varsigma] = [\eta_{3}|_{T\mathcal{F}}]$. Here $\eta|_{T\mathcal{F}} + \varsigma$ is nonzero at every point on $M$, because $\varsigma$ is $C^{\infty}$-close to $0$. Thus we get a smooth function $f$ on $M$ such that $f\eta|_{T\mathcal{F}} = \eta|_{T\mathcal{F}} + \varsigma$. Here $f$ is $C^{\infty}$-close to $1$, because $\varsigma$ is $C^{\infty}$-close to $0$. Since $[\eta_{3}|_{T\mathcal{F}}]$ is belongs to $E_{2}^{0,1}(\mathcal{F})$, we get $[f\eta|_{T\mathcal{F}}]$ is belongs to $E_{2}^{0,1}(\mathcal{F})$. Thus Rummler-Sullivan's criterion implies that $f\eta|_{T\mathcal{F}}$ is the restriction of the characteristic form of a minimal bundle-like metric (see Sullivan~\cite{Sullivan}, Rummler~\cite{Rummler} and Theorem in the introduction of Haefliger~\cite{Haefliger 2}). We take a bundle-like metric $g_{2}$ on $(M,\mathcal{F})$ so that the characteristic form is $f\eta$ and $g_{2}|_{\ker \eta \otimes \ker \eta} = g|_{\ker \eta \otimes \ker \eta}$. Then $g_{2}$ is $C^{\infty}$-close to $g$ and the characteristic form $f\eta|_{T\mathcal{F}}$ is the restriction of the characteristic form of a minimal bundle-like metric on $(M,\mathcal{F})$. 

By the last paragraph, it is sufficient to show Proposition~\ref{Proposition : g_1} in the case where $\eta|_{T\mathcal{F}}$ is the restriction of the characteristic form of a minimal metric. Proposition~\ref{prop:Alvarez Lopez2} implies that there exists $\gamma$ in $A_{1,1}$ such that $\kappa = d_{0,-1}^{*}\gamma$. By Lemma~\ref{Lemma : d0-1} and the open mapping theorem, there exists a continuous inverse $\mu$ of $d_{0,-1}^{*}|_{\overline{d_{0,1}(A_{1,0})}} : \overline{d_{0,1}(A_{1,0})} \to \overline{d_{0,-1}^{*}(A_{1,1})}$. Letting $\gamma = \mu(\kappa)$, we get $\gamma$ in $A_{1,1}$ such that $\kappa$ is $C^{\infty}$-close to $0$ and $\kappa = d_{0,-1}^{*}\gamma$. Proposition~\ref{prop:Alvarez Lopez2} imply that we can take a minimal bundle-like metric $g_{1}$ which is $C^{\infty}$-close to $g$. Thus we get a minimal bundle-like metric $g_{1}$ which is $C^{\infty}$-close to $g$. Lemma~\ref{Lemma : minimal and Killing} implies that $g_{1}$ is Killing.
\end{proof}

\subsection{Proof of Theorem~\ref{Theorem : Stability}}

We recall a result in Nozawa~\cite{Nozawa 2}, which is essential in the following proof of Theorem~\ref{Theorem : Stability}:
\begin{theorem}\label{Theorem : Invariance} 
Let $V$ be a connected open set of $\mathbb{R}^{\ell}$ and $\{\mathcal{F}^{t}\}_{t \in V}$ be a smooth family of Riemannian flows on a closed manifold. Then one of the following two cases occurs:
\begin{enumerate}
\item $\mathcal{F}^{t}$ is geometrically taut for every $t$ in $V$. 
\item $\mathcal{F}^{t}$ is not geometrically taut for every $t$ in $V$. 
\end{enumerate}
\end{theorem}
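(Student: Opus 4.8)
We sketch a proof of Theorem \ref{Theorem : Invariance}. The plan is to encode isometricity in a single cohomology class and then to prove that this class is rigid under deformation. Recall that a Riemannian flow on a closed manifold carries a bundle-like metric whose mean curvature $1$-form $\kappa$ is basic, that by a theorem of \'{A}lvarez L\'{o}pez the basic component of the mean curvature form of any bundle-like metric is closed and its basic cohomology class $\tau(\mathcal{F}) \in H^1_b(M/\mathcal{F})$ is independent of the metric, and that $\tau(\mathcal{F}) = 0$ if and only if $(M,\mathcal{F})$ is geometrically taut. By Lemma \ref{Lemma : minimal and Killing} and the theorems of Molino and Sergiescu and of Masa recalled above, geometric tautness of a Riemannian flow is equivalent to isometricity. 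Finally the natural map $j \colon H^1_b(M/\mathcal{F}) \to H^1(M;\mathbb{R})$ is injective: if a closed basic $1$-form $\alpha$ equals $df$ for some $f \in C^\infty(M)$, then $df(\xi) = \iota_\xi \alpha = 0$ for any local generator $\xi$ of $T\mathcal{F}$, so $f$ is constant on the leaves, hence basic, and $[\alpha] = 0$ in $H^1_b(M/\mathcal{F})$. Combining these facts, $\mathcal{F}^t$ is isometric if and only if $j(\tau(\mathcal{F}^t)) = 0$ in $H^1(M;\mathbb{R})$. It therefore suffices to prove that $t \mapsto j(\tau(\mathcal{F}^t))$ is locally constant on $T$: the isometric locus and the non-isometric locus are then both open, and connectedness of $T$ forces one of them to be all of $T$.

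To prove local constancy we would pass to the total space. Shrinking $T$ to a ball $U$, the family is a single flow $\mathcal{F}^{\amb}$ on $M \times U$ all of whose leaves lie in the slices $M \times \{t\}$; adjoining the Euclidean metric on the $U$-directions to $g^{\amb}$ yields a transverse metric for $\mathcal{F}^{\amb}$, since the linear holonomy of $\mathcal{F}^{\amb}$ acts as the $\mathcal{F}^t$-holonomy on the $M$-part of the normal bundle and trivially on the $U$-part, so $\mathcal{F}^{\amb}$ is a Riemannian flow. Choose a bundle-like metric $\tilde g^{\amb}$ on $M \times U$ whose mean curvature form $\kappa^{\amb}$ is basic and closed and which restricts to a bundle-like metric on every slice. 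Since the component of the ambient Levi-Civita connection tangent to a submanifold is its intrinsic connection, the pullback $i_t^* \kappa^{\amb}$ along the inclusion $i_t \colon M = M \times \{t\} \hookrightarrow M \times U$ is exactly the mean curvature $1$-form of $\mathcal{F}^t$ for the induced metric $\tilde g^{\amb}|_{M \times \{t\}}$; pulling back the basic and closed conditions shows that this $1$-form is closed and $\mathcal{F}^t$-basic, so it represents $\tau(\mathcal{F}^t)$. As all the inclusions $i_t$ are homotopic to one another and $\kappa^{\amb}$ is closed, the class $j(\tau(\mathcal{F}^t)) = i_t^*[\kappa^{\amb}] \in H^1(M;\mathbb{R})$ is independent of $t \in U$, which is the asserted local constancy.

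The main obstacle is the input to the second paragraph: producing a bundle-like metric on the \emph{noncompact} product $M \times U$ with basic mean curvature form and controlled behavior on the slices, equivalently performing \'{A}lvarez L\'{o}pez's normalization of a bundle-like metric uniformly in the deformation parameter. This is plausible because every leaf of $\mathcal{F}^{\amb}$ has compact closure inside a slice, but it is a parametrized statement about Riemannian foliations whose proof requires the deformation-theoretic tools of Nozawa \cite{Nozawa} and \cite{Nozawa 2}, and it is the crux of the argument. The remaining points — the injectivity of $j$, the reduction of the non-transversely-orientable case to the orientable one by passing to the double cover of $M$ determined by the relevant (locally constant, hence constant) Stiefel--Whitney class, and the passage from local constancy to the global dichotomy over a connected but possibly non-contractible $T$ — are routine.
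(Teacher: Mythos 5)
This theorem is not proved in the paper at all: it is stated with the phrase ``We recall a result in Nozawa \cite{Nozawa 2}'', so there is no internal proof to compare yours against, only the citation to the author's papers on the \'{A}lvarez class. Your first paragraph is a correct and faithful account of the framework those papers use: isometricity of a Riemannian flow on a closed manifold is equivalent to tautness (Carri\`{e}re, Molino--Sergiescu, Masa), tautness is equivalent to the vanishing of the \'{A}lvarez class $\tau(\mathcal{F})\in H^1_b(M/\mathcal{F})$, the map $H^1_b(M/\mathcal{F})\to H^1(M;\mathbb{R})$ is injective by exactly the argument you give, and the dichotomy over connected $T$ would follow from local constancy of $t\mapsto j(\tau(\mathcal{F}^t))$.

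The gap is in your second paragraph, and it is not merely the technical point you flag. First, the step you defer (``producing a bundle-like metric on $M\times U$ with basic, closed mean curvature form restricting correctly to the slices'') is essentially circular: given a smooth family $t\mapsto\kappa_b(t)$ of slice representatives, assembling them into a single \emph{closed} $1$-form on $M\times U$ requires precisely that $\tfrac{\partial}{\partial t}[\kappa_b(t)]$ be exact, i.e.\ the local constancy you are trying to prove. Second, and more importantly, the natural parametrized version of the Dom\'{\i}nguez/\'{A}lvarez L\'{o}pez normalization (which is what a slicewise construction, done smoothly in $t$, actually yields --- the basic component is an $L^2$-orthogonal projection and behaves only continuously in the parameter) gives \emph{continuity} of $t\mapsto j(\tau(\mathcal{F}^t))$, not local constancy. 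Continuity makes the isometric locus $\{t : j(\tau(\mathcal{F}^t))=0\}$ closed, whereas the dichotomy needs it to be open as well. Upgrading continuity to local constancy is the actual content of the cited references: it rests on the separate rigidity theorem for the \'{A}lvarez class in \cite{Nozawa} (the pairings of $\tau(\mathcal{F})$ with loops are constrained by the structural data of the foliation to lie in a set that rules out nonconstant continuous variation), combined with the continuity statement of \cite{Nozawa 2}. Your sketch contains no substitute for that rigidity input, so as written it does not close.
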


\begin{proof}[Proof of Theorem~\ref{Theorem : Stability}]
We extend the smooth family of transverse metrics $\{g_{\nu}^{t}\}_{t \in V}$ to a smooth family of Riemannian metrics $\{\widehat{g}^{t}\}_{t \in V}$ on~$M$ so that $\widehat{g}^{t}$ is bundle-like with respect to $\mathcal{F}^{t}$ and $\widehat{g}^{0}={g}$. Here $(M,\mathcal{F}^{t})$ is geometrically taut by Theorem~\ref{Theorem : Invariance}. Let $V_{1}$ be a sufficiently small open neighborhood of~$0$ in $V$. By Proposition~\ref{Proposition : g_1}, for $t$ in $V_{1}$, there exists a minimal metric $\widehat{g}_{1}^{t}$ on $(M,\mathcal{F}^{t})$ for $t$ in $V_{1}$ such that $\widehat{g}_{1}^{t}$ is sufficiently close to $\widehat{g}^{t}$.

We denote the formal adjoint operator of $d$ on $(M,\widehat{g}^{t})$ by $d^{* t}$. We will show that there exists a~$1$-form $\zeta^{t}$ such that 
\begin{description}
\item[(I)] $\zeta^{t}$ is sufficiently close to $\eta^{0}$,
\item[(II)] $d^{* t} \zeta^{t}\! =\! 0$ and
\item[(III)] $d\zeta^{t}$ is a basic~$(1,1)$-form on $(M,\mathcal{F}^{t})$. 
\end{description}
Let $\tau^{t}$ be the characteristic form of $(M,\mathcal{F}^{t},\widehat{g}_{1}^{t})$, which is close to $\eta^{0}$. Here $d\tau^{t}$ represents the basic Euler class of $(M,\mathcal{F}^{t})$ by the definition. Since the~$(0,2)$-component of the basic Euler class is trivial by the assumption, there exists a basic $(0,1)$-form $\sigma^{t}$ such that $\overline{\partial} \sigma^{t}\! =\! (d\tau^{t})^{0,2}$. We consider $\sigma^{t} + \overline{\sigma^{t}}$ as a real~$1$-form on~$M$. Let $\theta^{t}\! =\! \tau^{t} - (\sigma^{t} + \overline{\sigma^{t}})$. Then $d\theta^{t}$ is a basic~$(1,1)$-form. We consider a decomposition $\theta^{t}\! =\! \theta_{1}^{t} + \theta_{2}^{t}$, where $\theta_{1}^{t}$ is an exact~$1$-form and $\theta_{2}$ is belongs to $\ker d^{* t}$ by the Hodge decomposition of $\Omega^{1}(M)_{\mathbb{C}}$ with respect to $\widehat{g}^{t}$. Let $\zeta^{t}\! =\! \theta_{2}^{t}$. It follows that $\theta_{1}^{t}$ is close to~$0$, because $\tau^{t}$ is close to $\eta^{0}$, which is belongs to $\ker d^{* 0}$. Then the condition~(I) is satisfied. The condition~(II) is clearly satisfied by the construction. Since $d\zeta^{t}\! =\! d\theta^{t}$ is a basic~$(1,1)$-form, the condition~(III) is satisfied. 

We consider differential operators $d_{1,1,-1}^{t}$, $D^{t}$ and $\Delta_{D}^{t}$ on $(M,\mathcal{F}^{t})$ as in~\eqref{Equation : t1},~\eqref{Equation : t2},~\eqref{Equation : t3} and~\eqref{Equation : t4} using a splitting $TM = (H^{t})^{1,0} \oplus (H^{t})^{0,1} \oplus (T\mathcal{F}^{t} \otimes \mathbb{C})$ determined by $H^{t}$ and $J^{t}$. Let $\mathbf{H}^{1}_{D}(t)$ be the space of $\Delta_{D}^{t}$-harmonic~$1$-forms. Let $\mathbf{H}^{1}(t)$ be the space of harmonic~$1$-forms on $(M,\widehat{g}^{t})$. We show that $\dim \mathbf{H}^{1}_{D}(t)$ is constant with respect to $t$ on $V_{1}$. By Lemma~\ref{Lemma : H 1}, we have
\begin{equation}\label{Equation : 1}
\dim \mathbf{H}^{1}_{D}(0) = \dim H^{1}(M;\mathbb{C}) + 1.
\end{equation}
Here $D^{t}$ is self-adjoint strongly elliptic operator by Lemma~\ref{lem:D} (i). Hence Theorem~4 of Kodaira and Spencer~\cite{Kodaira Spencer} implies that $\dim \mathbf{H}^{1}_{D}(t)$ is upper semicontinuous with respect to $t$. Thus we have
\begin{equation}\label{Equation : 3}
\dim \mathbf{H}^{1}_{D}(t) \leq \dim \mathbf{H}^{1}_{D}(0)
\end{equation}
for $t$ in $V_{1}$. Here $\mathbf{H}^{1}(t) \subseteq \mathbf{H}_{D}^{1}(t)$ by Lemma~\ref{lem:D} (iv). By the previous paragraph, we have a~$1$-form $\zeta^{t}$ which satisfies the conditions~(I),~(II) and~(III) above. The condition~(II) implies $D^{* t}\zeta^{t}\! =\! 0$. The condition~(III) implies $D^{t}\zeta^{t}\! =\! 0$. Note that $\zeta^{t}$ is not $d$-closed by the condition~(I) and the nontriviality of $d\eta^{0}$. It follows that $\zeta^{t}$ is not a contained in $\mathbf{H}^{1}(t)$. Thus we have
\begin{equation}\label{Equation : 4}
\dim H^{1}(M;\mathbb{C}) + 1 \leq \dim \mathbf{H}^{1}_{D}(t).
\end{equation}
By~\eqref{Equation : 1},~\eqref{Equation : 3} and~\eqref{Equation : 4}, we have
\begin{equation*}
\dim H^{1}(M;\mathbb{C}) + 1 = \dim \mathbf{H}^{1}_{D}(t).
\end{equation*}
Thus $\dim \mathbf{H}^{1}_{D}(t)$ is constant with respect to $t$.

Since $\dim \mathbf{H}^{1}_{D}(t)$ is constant with respect to $t$ by the consequence of the previous paragraph, the projection $F^{t} \colon \Omega^{1}(M)_{\mathbb{C}} \to \mathbf{H}^{1}_{D}(t)$ maps a smooth family of~$1$-forms to a smooth family of~$1$-forms by Theorem~5 of Kodaira and Spencer~\cite{Kodaira Spencer}. Letting $\varpi^{t}\! =\! F^{t}(\eta^{t})$, we have a smooth family $\{\varpi^{t}\}_{t \in V_{1}}$ of $\Delta_{D}^{t}$-harmonic~$1$-forms such that $\varpi^{0}\! =\! F^{0}(\eta^{0})\! =\! \eta^{0}$. We show that $d\varpi^{t}$ is a basic~$(1,1)$-form on $(M,\mathcal{F}^{t})$. By $D^{t}\varpi^{t}\! =\! 0$, we have $d\varpi^{t}\! =\! d_{1,1,-1}^{t}\varpi^{t}$. Thus $\iota_{\xi^{t}}d\varpi^{t}\! =\! 0$. It follows that $d\varpi^{t}$ is basic. Hence $d\varpi^{t}$ is a basic~$(1,1)$-form on $(M,\mathcal{F}^{t})$. Letting $\Real \varpi^{t}\! =\! \frac{\varpi^{t} + \overline{\varpi}^{t}}{2}$, we have a smooth family $\Real \varpi^{t}$ of real~$1$-forms such that $d\Real \varpi^{t}$ is a basic~$(1,1)$-forms. Since $d\eta^{0}$ is nondegenerate on $TM/T\mathcal{F}^{0}$, $d \Real \varpi^{t}$ is also nondegenerate  on $TM/T\mathcal{F}^{t}$ for $t$ in $V_{1}$. Thus $J^{t}$ and $d \Real \varpi^{t}$ determines a transverse metric on $TM/T\mathcal{F}^{t}$. By Proposition~\ref{Proposition : Transversely Kahler}, a pair of a transversely K\"{a}hler flow $(\mathcal{F}^{t},J^{t})$ and a contact form $\Real \varpi^{t}$ whose transverse K\"{a}hler form is $d\Real \varpi^{t}$ determines a Sasakian metric on~$M$.
\end{proof}

\begin{remark}
We remark that we can show the existence of a Sasakian metric on~$M$ which is compatible with $(\mathcal{F}^{t},J^{t})$ for $t$ close to~$0$ directly by the triviality of the~$(0,2)$-component of the basic Euler class without considering the Laplacian of~$D$. But we need the operator $D$ to show the existence of a smooth family of compatible Sasakian metrics, which has certain advantage comparing with the existence of compatible Sasakian metrics at each parameter.
\end{remark}

\section{Stability of K-contact structures}\label{sec : Kcont}

Let~$M$ be a closed manifold with a Riemannian foliation $\mathcal{F}$ and a bundle-like metric $g$. In this Section~\ref{sec : Kcont} we consider only real differential forms. Let $Q=(T\mathcal{F})^{\perp}$. We use the well known double grading $\Omega^{\bullet}(M)=\oplus_{j,k} A_{j,k}$ of the real de Rham complex $\Omega^{\bullet}(M)$, where $A_{j,k}=C^{\infty}(\wedge^{j} Q^{*} \otimes \wedge^{k} T^{*}\mathcal{F})$ instead of the triple grading considered in Section~\ref{sec:D}. Then $d$ is decomposed in terms of the double grading as $d = d_{1,0} + d_{0,1} + d_{2,-1}$ as~\eqref{eq:decompositionofd}. Let $\widehat{D} = d_{1,0} + d_{0,1}$. Since $d - \widehat{D} = d_{2,-1}$ is a differential operator of degree~$0$ by Lemma~1.1 of \'{A}lvarez L\'{o}pez~\cite{Alvarez Lopez}, it follows that $\Delta_{\widehat{D}}$ is a self-adjoint strongly elliptic operator with the same symbol as the Laplacian of $d$. Let $\mathbf{H}^{1}_{\widehat{D}}$ be the space of $\Delta_{\widehat{D}}$-harmonic~$1$-forms. An argument similar to the proof of Lemma~\ref{Lemma : H 1} shows
\begin{lemma}\label{Lemma : H 1 2}
If $(M,\mathcal{F})$ admits a $K$-contact structure whose Reeb flow is $\mathcal{F}$, then we have an equality
\begin{equation*}
\mathbf{H}^{1}_{\widehat{D}} = \mathbf{H}^{1} \oplus \mathbb{R} \eta
\end{equation*}
in $\Omega^{1}(M)$.
\end{lemma} 

Theorem~\ref{Theorem : K-contact} is proved by an argument similar to the proof of Theorem~\ref{Theorem : Stability}. We only describe the outline as follows:
\begin{proof}[Outline of the Proof of Theorem~\ref{Theorem : K-contact}]
Let $\{\widehat{g}^{t}\}_{t \in V}$ be a smooth family of Riemannian metrics on~$M$ such that $\widehat{g}^{0} = g^{0}$ and the transverse metric of $\mathcal{F}^{t}$ induced from $\widehat{g}^{t}$ is $g^{t}_{\nu}$. Theorem~\ref{Theorem : Invariance} and Proposition~\ref{Proposition : g_1} imply that there exists a Killing metric $\widehat{g}_{1}^{t}$ of $(M,\mathcal{F}^{t})$ sufficiently close to $g^{t}$ for $t$ in a sufficiently small neighborhood $V_{1}$ of~$0$ in $V$.

Let $Q^{t} = (T\mathcal{F}^{t})^{\perp}$. We consider a double grading $\Omega^{\bullet}(M) = \oplus_{j,k} A_{j,k}^{t}$ of $\Omega^{\bullet}(M)$, where $A_{j,k}^{t}=C^{\infty}(\wedge^{j} Q^{t *} \otimes \wedge^{k} T^{*}\mathcal{F}^{t})$. We decompose $d$ by double grading on $(M,\mathcal{F}^{t})$ and let $\widehat{D}^{t} = d_{1,0}^{t} + d_{0,1}^{t}$. As the second paragraph of the proof of Theorem~\ref{Theorem : Stability}, we can show that there exists a~$1$-form $\zeta^{t}$ on~$M$ such that 
\begin{description}
\item[(I')] $\zeta^{t}$ is sufficiently close to $\eta^{0}$ and
\item[(II')] $d^{* t} \zeta^{t} = 0$,
\end{description}
where $d^{* t}$is the formal adjoint of $d$ on $(M,\widehat{g}^{t})$.

We see that $\zeta^{t}$ is a $\widehat{D}^{t}$-harmonic form which is linearly independent of harmonic~$1$-forms by an argument similar to the third paragraph of the proof of Theorem~\ref{Theorem : Stability}. Lemma~\ref{Lemma : H 1 2} implies $\dim \mathbf{H}^{1}_{D}(t) = \dim \mathbf{H}^{1}(M;\mathbb{R}) + 1$, where $\mathbf{H}^{1}_{D}(t)$ is the space of $\Delta_{\widehat{D}^{t}}$-harmonic~$1$-forms.

Since $\dim \mathbf{H}^{1}_{D}(t)$ is constant with respect to $t$, by Theorem~5 of Kodaira and Spencer~\cite{Kodaira Spencer}, we get a smooth family of~$1$-forms $\varpi^{t}$ by projecting $\eta^{t}$ to $\mathbf{H}^{1}_{D}(t)$. Then $d\varpi^{t}$ is basic. Since $\varpi^{0} = \eta^{0}$ is a contact form, $\varpi^{t}$ is contact for $t$ in $V_{1}$. Thus $\varpi^{t}$ is a contact form whose Reeb flow is a Riemannian flow $\mathcal{F}^{t}$. 
\end{proof}

\section{Kodaira-Akizuki-Nakano vanishing theorem for transversely K\"{a}hler foliations}\label{Section : KAN}

\subsection{Basic Hodge star operator and basic Lefschetz operator}

We recall fundamental notion of Lefschetz theory for basic cohomology of transversely K\"{a}hler foliation introduced by El~Kacimi~Alaoui~\cite{El Kacimi Alaoui}. Let $(\mathcal{F},J,g_{\nu})$ be a complex codimension $n$ transversely K\"{a}hler foliation with transverse K\"{a}hler form $\omega$. We consider complex differential forms $\Omega^{\bullet}(M)_{\mathbb{C}} = \Omega^{\bullet}(M) \otimes \mathbb{C}$ throughout Section~\ref{Section : KAN}. At each point $x$ on~$M$, we have the Hodge star operator
\begin{equation*}
*_{b,x} \colon \wedge^{k} (T^{*}_{x}M/T^{*}_{x}\mathcal{F}) \otimes \mathbb{C} \longrightarrow \wedge^{2n-k} (T^{*}_{x}M/T^{*}_{x}\mathcal{F}) \otimes \mathbb{C} 
\end{equation*}
determined by the transverse metric $g$ on $T_{x}M/T_{x}\mathcal{F}$. A basic $k$-form on $(M,\mathcal{F})$ can be regarded as a section of $\wedge^{k} (T^{*}M/T^{*}\mathcal{F})$. Thus we have the basic Hodge operator
\begin{equation*}
*_{b} \colon \Omega_{b}^{k}(M/\mathcal{F})_{\mathbb{C}} \longrightarrow \Omega_{b}^{2n-k}(M/\mathcal{F})_{\mathbb{C}} 
\end{equation*}
Composing $*_{b}$ with complex conjugation, we have
\begin{equation*}
\overline{*}_{b} \colon \Omega_{b}^{p,q}(M/\mathcal{F})  \longrightarrow \Omega_{b}^{n-p,n-q}(M/\mathcal{F}).
\end{equation*}
Let $(E,h_{E})$ be a holomorphic $\mathcal{F}$-fibered Hermitian line bundle over $(M,\mathcal{F})$. Let
\begin{equation*}
h \colon E \longrightarrow E^{*}
\end{equation*}
be a $\mathbb{C}$-antilinear isomorphism defined by $h(s) = h_{E}(s,\cdot)$. The complex conjugation of the basic Hodge star operator on the basic Dolbeault complex with values in $E$
\begin{equation*}
\overline{*}_{b,E} \colon \Omega_{b}^{p,q}(M/\mathcal{F},E) \longrightarrow \Omega_{b}^{n-p,n-q}(M/\mathcal{F},E^{*})
\end{equation*}
is defined by $\overline{*}_{b,E}(\alpha \otimes s)=\overline{*}_{b}\alpha \otimes h(s)$ for sections of the form $\alpha \otimes s$, where $\alpha$ is a basic $(p,q)$-form and $s$ is a local holomorphic section of $E$. We define the basic Lefschetz operator
\begin{equation*}
L \colon \Omega_{b}^{k}(M/\mathcal{F},E)_{\mathbb{C}}  \longrightarrow \Omega_{b}^{k+2}(M/\mathcal{F},E)_{\mathbb{C}}
\end{equation*}
by $L \alpha = \alpha \wedge \omega$, where $\omega$ is the transverse K\"{a}hler form of $(M,\mathcal{F},J,g_{\nu})$.

\subsection{Chern connections of $\mathcal{F}$-fibered Hermitian holomorphic line bundles}
Let $(E,h_{E})$ be a holomorphic $\mathcal{F}$-fibered Hermitian line bundle over $(M,\mathcal{F})$. Let 
\begin{equation*}
\nabla_{E} \colon \Omega_{b}^{k}(M/\mathcal{F},E)_{\mathbb{C}}  \longrightarrow \Omega_{b}^{k+1}(M/\mathcal{F},E)_{\mathbb{C}}  
\end{equation*}
be a basic connection of $(E,h_{E})$. Recall that
\begin{equation*}
\nabla_{E}(\Omega_{b}^{p,q}(M/\mathcal{F},E)) \subseteq \Omega_{b}^{p+1,q}(M/\mathcal{F},E) \oplus \Omega_{b}^{p,q+1}(M/\mathcal{F},E).
\end{equation*}
We define the $(1,0)$-component $\nabla_{E}'$ and the $(0,1)$-component $\nabla_{E}''$ of $\nabla_{E}$ by 
\begin{equation*}
\nabla_{E}' \alpha = \pi^{p+1,q} \nabla_{E} \alpha, \,\,\,\, \nabla_{E}'' \alpha = \pi^{p,q+1} \nabla_{E} \alpha
\end{equation*}
for $\alpha$ in $\Omega_{b}^{p,q}(M/\mathcal{F},E)$, where $\pi^{i,j}$ is the projection to $\Omega_{b}^{i,j}(M/\mathcal{F},E)$ for $(i,j) = (p+1,q)$ and $(p,q+1)$, respectively. Throughout Section~\ref{Section : KAN}, we assume that $\nabla_{E}$ is the Chern connection of $(E,h_{E})$, that is, $\nabla_{E}$ is the unique connection on $E$ such that $\nabla_{E} h = 0$ and $\nabla_{E}'' = \overline{\partial}_{E}$, where $\overline{\partial}_{E}$ is the basic Dolbeault operator defined in Definition~\ref{Definition : Dol}. In particular, $(\nabla_{E}'')^{2} = 0$. 

Regarding the curvature form $\Theta(\nabla_{E})$ of $\nabla_{E}$ as a basic~$2$-form on~$(M,\mathcal{F})$ by a natural isomorphism 
\begin{equation*}
\Omega_{b}^{2}(M/\mathcal{F})_{\mathbb{C}} \otimes \End(E) \cong \Omega_{b}^{2}(M/\mathcal{F})_{\mathbb{C}} \otimes C^{\infty}_{b}(M/\mathcal{F})_{\mathbb{C}} \cong \Omega_{b}^{2}(M/\mathcal{F})_{\mathbb{C}},
\end{equation*}
we define the curvature operator
\begin{equation*}
\Theta(\nabla_{E}) \colon \Omega_{b}^{k}(M/\mathcal{F},E)_{\mathbb{C}} \longrightarrow \Omega_{b}^{k+2}(M/\mathcal{F},E)_{\mathbb{C}}
\end{equation*}
of $\nabla_{E}$ by $\Theta(\nabla_{E}) \alpha = \alpha \wedge \Theta(\nabla_{E})$.

Note that there exists a unique connection $\nabla_{E^{*}}$ on $E^{*}$ which satisfies
\begin{equation}\label{eq : dual}
d \langle s , s^{*}  \rangle = \langle \nabla_{E}s, s^{*}\rangle + \langle s, \nabla_{E^{*}} s^{*} \rangle,
\end{equation}
for $s$ in $C^{\infty}(E)$ and $s^{*}$ in $C^{\infty}(E^{*})$ where $\langle \cdot , \cdot \rangle$ is the coupling $E \otimes E^{*} \to \mathbb{C}$.

Let $\widetilde{\wedge}$ be the wedge product defined by the composite of
\begin{equation}\label{Equation : tilde wedge}
\xymatrix{ \Omega_{b}^{p,q}(M/\mathcal{F}, E) \times \Omega_{b}^{n-p,n-q}(M/\mathcal{F}, E^{*}) \ar[r]^>>>>>{\wedge} & \Omega_{b}^{n,n}(M/\mathcal{F}, E \otimes E^{*}) \ar[r] & \Omega_{b}^{n,n}(M/\mathcal{F}), }
\end{equation}
where the second map is induced by the coupling $E \otimes E^{*} \to \mathbb{C}$.

\subsection{Homological orientability of transversely K\"{a}hler foliations}

Let $(\mathcal{F},J,g_{\nu})$ be a transversely K\"{a}hler foliation of complex codimension $n$ on a closed manifold~$M$. We recall a terminology due to El~Kacimi~Alaoui~\cite{El Kacimi Alaoui}:
\begin{definition}\label{Definition : HO}
$(\mathcal{F},J,g_{\nu})$ is {\it homologically orientable} if the basic cohomology group $H_{b}^{2n}(M/\mathcal{F})$ of degree~$2n$ is nontrivial.
\end{definition}
\noindent We refer to Masa~\cite{Masa} or El~Kacimi~Alaoui and Hector~\cite{El Kacimi Alaoui Hector} for the basic cohomology of Riemannian foliations. Since the basic cohomology is determined only by the underlying foliation $\mathcal{F}$, it is independent of the transverse structures. By a theorem of Masa~\cite{Masa} in the general case, or by a theorem of Molino and Sergiescu~\cite{Molino Sergiescu} in the case of flows, $(\mathcal{F},g_{\nu})$ is homologically orientable if and only if $\mathcal{F}$ is geometrically taut (see Definition~\ref{Definition : GT}).
\begin{example}\label{Example : RS}
The Reeb flow $\mathcal{F}$ of a Sasakian manifold is isometric (see Definition~\ref{Definition : Isom}). Then it is geometrically taut by Lemma~\ref{Lemma : minimal and Killing} due to Carri\`{e}re. Thus $\mathcal{F}$ is homologically orientable by a theorem of Masa or a theorem of Molino and Sergiescu.
\end{example}

\subsection{An inner product of El~Kacimi~Alaoui--Hector}\label{sec : KH}

We assume that $(M,\mathcal{F})$ is homologically orientable in the sequel (see Definition~\ref{Definition : HO}). We will use the Molino theory, which is one of fundamental tools in the work of El~Kacimi~Alaoui and Hector of~\cite{El Kacimi Alaoui Hector} and El~Kacimi~Alaoui~\cite{El Kacimi Alaoui}. We refer to Molino~\cite{Molino} for the Molino theory. We define an inner product on $\Omega_{b}^{\bullet}(M/\mathcal{F},E)_{\mathbb{C}} $ under the assumption of homologically orientability following the argument of El~Kacimi~Alaoui and Hector in Section 4.5 of~\cite{El Kacimi Alaoui Hector} (see also El~Kacimi~Alaoui~\cite{El Kacimi Alaoui}), where they consider the case of trivial line bundles Let $\rho \colon M^{1} \to M$ be the orthonormal frame bundle of the normal bundle of $\mathcal{F}$. Let $\pi \colon M^{1} \to W$ be the basic fibration of $\mathcal{F}$. Let $m = \dim \SO(2n)$ and $X_{1}$, $X_{2}$, $\ldots$, $X_{m}$ be the vector fields on~$M$ which generate the principal $\SO(2n)$-action on $M^{1}$. Let $\theta_{i}$ be the basic $1$-form on $(M^{1},\mathcal{F}^{1})$ dual to $X_{i}$. We define an $m$-form $\chi$ on $M^{1}$ by
\begin{equation*}
\chi = \theta_{1} \wedge \theta_{2} \wedge \cdots \wedge \theta_{m}.
\end{equation*}
If $W$ is orientable, an inner product $(\cdot,\cdot)$ is defined by
\begin{equation}\label{Equation : inner product}
(\alpha_{1} ,\alpha_{2}) = \int_{W} \mathcal{I} \big( \rho^{*} (\alpha_{1} \widetilde{\wedge} \overline{*}_{b,E} \alpha_{2}) \wedge \chi \big)
\end{equation}
for $\alpha_{1}$ and $\alpha_{2}$ in $\Omega_{b}^{\bullet}(M/\mathcal{F},E)_{\mathbb{C}}$. Here $\mathcal{I} \colon \Omega_{b}^{k}(M/\mathcal{F})_{\mathbb{C}}  \to \Omega^{k-d}(W)_{\mathbb{C}}$ is the integration along fibers of $\pi$ defined under the assumption of the homologically orientability of $\mathcal{F}$ by El~Kacimi~Alaoui and Hector  (see Proposition~3.2 of~\cite{El Kacimi Alaoui Hector}). This $\mathcal{I}$ commutes with $d$ as shown there. If $W$ is not orientable, the orientation cover of $W$ can be used to define an inner product in the same equation as~\eqref{Equation : inner product}. In what follows, we also assume that the orientability of $W$ for the simplicity.
\begin{lemma}\label{Lemma : Stokes}
We have
\begin{equation}\label{Equation : stokes 1}
(\nabla_{E} \alpha_{1}, \alpha_{2}) = \big(\alpha_{1}, -(\overline{*}_{b,E^{*}})\nabla_{E^{*}} (\overline{*}_{b,E}) \alpha_{2}\big)
\end{equation}
for $\alpha_{1}$ in $\Omega_{b}^{k-1}(M/\mathcal{F},E)_{\mathbb{C}}$ and $\alpha_{2}$ in $\Omega_{b}^{k}(M/\mathcal{F},E)_{\mathbb{C}}$, where $\nabla_{E^{*}}$ is the connection dual to $\nabla_{E}$ mentioned in~\eqref{eq : dual}.
\end{lemma}
\begin{proof}
We closely follow the argument in Proposition~4.6 of El~Kacimi~Alaoui and Hector~\cite{El Kacimi Alaoui Hector} or Section~3.2.4 of El~Kacimi~Alaoui~\cite{El Kacimi Alaoui} where they consider the case of trivial bundles. Letting $\beta_{1} = \overline{*}_{b,E}\alpha_{2}$, we have
\begin{multline}\label{Equation : stokes 3}
d \big( \rho^{*} (\alpha_{1} \widetilde{\wedge} \beta_{1}) \wedge \chi \big) \\ 
= \big( \rho^{*} ( \nabla_{E}\alpha_{1} \widetilde{\wedge} \beta_{1}) \wedge \chi \big) + (-1)^{k-1} \big( \rho^{*} (\alpha_{1} \widetilde{\wedge} \nabla_{E^{*}}\beta_{1}) \wedge \chi \big) + (-1)^{2n-1} \big( \rho^{*} (\alpha_{1} \widetilde{\wedge} \beta_{1}) \wedge d\chi \big),
\end{multline}
where $2n$ is the real codimension of $\mathcal{F}$. Recall that $\rho \colon M^{1} \to M$ is the projection. Let $A_{j,k}(M^{1})=C^{\infty}(\wedge^{j} (\ker \chi)^{*} \otimes \wedge^{k} (\ker T\rho)^{*})$, where $T\rho$ is the differential map of $\rho$. Then we get a double grading $\Omega^{\bullet}(M^{1})_{\mathbb{C}} =\oplus_{j,k} A_{j,k}(M^{1})$. Here $d$ is decomposed into $d=d_{0,1}+d_{1,0}+d_{2,-1}$, where the subscripts means the grading. Recall that we let $m=\dim \SO(2n)$. Since $\chi$ is of degree~$(0,m)$, we can decompose $d\chi$ as
\begin{equation*}
d\chi = d_{0,1}\chi +d_{1,0}\chi +d_{2,-1}\chi,
\end{equation*}
where $d_{i,j}\chi$ is belongs to $A_{i,j+m}(M^{1})$. We get $d_{0,1}\chi = 0$, because $A_{\bullet,m+1}(M^{1})=0$. By the Rummler's formula (see the second formula in the proof of Proposition~1 in Rummler~\cite{Rummler} or Lemma~10.5.6 of Candel and Conlon~\cite{Candel Conlon}), we get $d_{1,0}\chi = -\chi \wedge \kappa_{\rho}$, where $\kappa_{\rho}$ is the mean curvature form of the foliation defined by the fibers of $\rho$. By the definition of $\chi$, we get $\kappa_{\rho}=0$, which implies $d_{1,0}\chi=0$. Thus $d\chi$ is of degree~$(2,m-1)$. Since $A_{2n+1,\bullet}(M^{1})=0$, we get $d\chi \wedge \alpha = 0$ for any $\alpha$ in $A_{2n-1,0}(M^{1})$, which implies that the third term of the right hand side of~\eqref{Equation : stokes 3} is zero. Composing $\mathcal{I}$ and $\int_{W}$ to both sides of~\eqref{Equation : stokes 3}, we have~\eqref{Equation : stokes 1}.
\end{proof}

\subsection{Formal adjoint operators with respect to $(\cdot,\cdot)$}

Recall that the formal adjoint $\mathcal{D}^{\star}$ of a $\mathbb{C}$-linear map $\mathcal{D} \colon \Omega_{b}^{\bullet}(M/\mathcal{F},E)_{\mathbb{C}} \to \Omega_{b}^{\bullet+k}(M/\mathcal{F},E)_{\mathbb{C}}$ is a $\mathbb{C}$-linear map $\mathcal{D}^{\star} \colon \Omega_{b}^{\bullet}(M/\mathcal{F},E)_{\mathbb{C}} \to \Omega_{b}^{\bullet-k}(M/\mathcal{F},E)_{\mathbb{C}}$ such that $(\mathcal{D}\alpha_{1},\alpha_{2}) = (\alpha_{1},\mathcal{D}^{\star}\alpha_{2})$ for $\alpha_{1}$ in $\Omega_{b}^{\bullet}(M/\mathcal{F},E)_{\mathbb{C}}$ and $\alpha_{2}$ in $\Omega_{b}^{\bullet+k}(M/\mathcal{F},E)_{\mathbb{C}}$. The following lemma shows that the formal adjoint operators with respect to $(\cdot,\cdot)$ are given by the conjugation by the Hodge star operators as in the case of complex manifolds. This is an advantage of the inner product $(\cdot,\cdot)$, which allows us to apply the classical argument for complex manifolds to show vanishing theorems:
\begin{lemma}\label{Lemma : formal adjoint}
Letting $\Lambda = L^{\star}$, we get
\begin{align}
\Lambda & = (\overline{*}_{b,E^{*}}) \, L \, (\overline{*}_{b,E}), \label{eq : fa1} \\
\nabla_{E}''^{\star} & = - (\overline{*}_{b,E^{*}}) \, \nabla_{E^{*}}'' \, (\overline{*}_{b,E}), \label{eq : fa2} \\
\nabla_{E}'^{\star} & = - (\overline{*}_{b,E^{*}}) \, \nabla_{E^{*}}' \, (\overline{*}_{b,E}). \label{eq : fa3} 
\end{align}
\end{lemma}
\begin{proof}
For $\alpha_{1}$ in $\Omega_{b}^{k}(M/\mathcal{F},E)_{\mathbb{C}}$ and $\alpha_{2}$ in $\Omega_{b}^{k+2}(M/\mathcal{F},E)_{\mathbb{C}}$, we get 
\begin{equation*}
(\alpha_{1}, \Lambda \alpha_{2}) = (L \alpha_{1}, \alpha_{2}) =  (\alpha_{1} \wedge \omega, \alpha_{2}) = \big(\alpha_{1},  \overline{*}_{b,E^{*}} (\omega \wedge  \overline{*}_{b,E} \alpha_{2}) \big) = (\alpha_{1},  \overline{*}_{b,E^{*}} L \overline{*}_{b,E} \alpha_{2}) 
\end{equation*}
by definition of $(\cdot,\cdot)$. Then \eqref{eq : fa1} is proved. Take $\alpha_{3}$ in $\Omega_{b}^{p,q-1}(M/\mathcal{F},E)$ and $\alpha_{4}$ in $\Omega_{b}^{p,q}(M/\mathcal{F},E)$. We get $(\nabla_{E}'\alpha_{3}, \alpha_{4}) = 0$ and $(\alpha_{3}, \overline{*}_{b,E^{*}} \nabla_{E^{*}}' \overline{*}_{b,E} \alpha_{4}) = 0$ because of the degree. Thus Lemma~\ref{Lemma : Stokes} implies
\begin{equation*}
(\nabla_{E}''\alpha_{3}, \alpha_{4}) = (\nabla_{E}\alpha_{3}, \alpha_{4}) =  ( \alpha_{3}, -\overline{*}_{b,E^{*}} \nabla_{E^{*}} \overline{*}_{b,E} \alpha_{4}) = ( \alpha_{3}, -\overline{*}_{b,E^{*}} \nabla_{E^{*}}'' \overline{*}_{b,E} \alpha_{4}).
\end{equation*}
Thus~\eqref{eq : fa2} is proved. Take $\alpha_{5}$ in $\Omega_{b}^{p-1,q}(M/\mathcal{F},E)$ and $\alpha_{6}$ in $\Omega_{b}^{p,q}(M/\mathcal{F},E)$. Because of the degree, we get $(\nabla_{E}''\alpha_{5}, \alpha_{6}) = 0$ and $(\alpha_{5}, \overline{*}_{b,E^{*}} \nabla''_{E^{*}} \overline{*}_{b,E} \alpha_{6}) = 0$. Thus Lemma~\ref{Lemma : Stokes} implies
\begin{equation*}
(\nabla_{E}'\alpha_{5}, \alpha_{6}) = (\nabla_{E}\alpha_{5}, \alpha_{6}) =  ( \alpha_{5}, -\overline{*}_{b,E^{*}} \nabla_{E^{*}} \overline{*}_{b,E} \alpha_{6}) = ( \alpha_{5}, -\overline{*}_{b,E^{*}} \nabla'_{E^{*}} \overline{*}_{b,E} \alpha_{6}).
\end{equation*}
Thus~\eqref{eq : fa3} is proved. 
\end{proof}

\begin{remark}\label{Remark : adj}
$\Omega_{b}^{\bullet,\bullet}(M/\mathcal{F},E)$ has another natural inner product obtained by the restriction of the inner product on $\Omega^{\bullet}(M,E)_{\mathbb{C}}$. In this case, the formal adjoint of $\nabla_{E}$ may be different from $- \overline{*}_{b,E^{*}} \nabla_{E^{*}} \overline{*}_{b,E}$. The difference is given by the mean curvature form of $\mathcal{F}$ (see Proposition~3.6 of Kamber and Tondeur~\cite{Kamber Tondeur} for the case where $E$ is trivial).
\end{remark}

\subsection{Bochner-Kodaira-Nakano identity}\label{sec:BKN}

For each integer $k$, let
\begin{equation*}
\mathfrak{g}_{k} = \{ A \in \End_{\mathbb{C}-\Vect} \big(\Omega^{\bullet}_{b}(M/\mathcal{F},E)_{\mathbb{C}} \big) \, | \, A\big(\Omega^{\bullet}_{b}(M/\mathcal{F},E)_{\mathbb{C}}\big) \subseteq \Omega^{\bullet+k}_{b}(M/\mathcal{F},E)_{\mathbb{C}} \}.
\end{equation*}
It is well known that $\mathfrak{g} = \bigoplus_{k \in \mathbb{Z}} \mathfrak{g}_{k}$ has a bracket operation $[\cdot,\cdot] \colon \mathfrak{g} \times \mathfrak{g} \to \mathfrak{g}$ defined by $[A,B] = AB - (-1)^{a \cdot b} BA$ for $A$ in $\mathfrak{g}_{a}$ and $B$ in $\mathfrak{g}_{b}$. The Laplacian of $\nabla_{E}''$ and $\nabla_{E}'$ is defined by
\begin{align*}
\Delta_{E}'' & = [\nabla_{E}'',\nabla_{E}''^{\star}], & \Delta_{E}' & = [\nabla_{E}',\nabla_{E}'^{\star}].
\end{align*}
The Bochner-Kodaira-Nakano identity describes the difference of two Laplacians by the curvature of $E$. For the proof of the identity, we recall our notation on local description of basic forms on $(M,\mathcal{F})$ in Section~\ref{Section : Atlas}. Let $(M,\mathcal{F},J,g_{\nu})$ be a transversely K\"{a}hler foliation. Let $(E,h)$ be a holomorphic $\mathcal{F}$-fibered Hermitian line bundle on $(M,\mathcal{F})$. As in Section~\ref{Section : Atlas}, we take a covering $\{U_{j}\}$ of~$M$ so that $(U_{j},\mathcal{F}|_{U_{j}})$ is isomorphic to the standard transversely holomorphic foliation defined by a decomposition $\mathbb{R}^{m} \times B^{n}_{j} = \sqcup_{z \in B^{n}_{j}} \mathbb{R}^{m} \times \{z\}$, where $B^{n}_{j}$ is the unit ball of $\mathbb{C}^{n}$. Let $\phi_{j} \colon U_{j} \to B^{n}_{j}$ be the composite of $U_{j} \cong \mathbb{R}^{m} \times B^{n}_{j} \to B^{n}_{j}$, where the second map is the second projection. By definition of holomorphic $\mathcal{F}$-fibered Hermitian line bundle (see Definition~\ref{Definition : F fibered vector bundle}), there exists a Hermitian holomorphic line bundle $(E_{j},h_{j})$ and a K\"{a}hler metric $g_{j}$ on $B^{n}_{j}$ which determines $g_{\nu}$, $E$ and $h$ by
\begin{align*}
g_{\nu}|_{U_{j}} & = \phi_{j}^{*}g_{j}, \\
E|_{U_{j}} & = \phi_{j}^{*}E_{j}, \\
h|_{U_{j}} & = \phi_{j}^{*}h_{j}.
\end{align*}
We consider the Chern connection $\nabla_{j}$ of $(E_{j},h_{j})$, the Lefschetz operator $L_{j}$ on $B^{n}_{j}$ and the complex conjugate of the Hodge star operator $\overline{*}_{E_{j}}$ on $B^{n}_{j}$ with values in~$E_{j}$. We denote the curvature operator of $\nabla_{j}$ by $\Theta(\nabla_{j})$. Clearly we get
\begin{align}
(\overline{*}_{b,E}) \phi_{j}^{*} \beta & = \phi_{j}^{*} (\overline{*}_{E_{j}} \beta), \notag \\
L \phi_{j}^{*} \beta & = \phi_{j}^{*} (L_{j} \beta), \notag \\
\nabla_{E}' \phi_{j}^{*} \beta & = \phi_{j}^{*} (\nabla_{j}' \beta), \label{eq:naturality} \\
\nabla_{E}'' \phi_{j}^{*} \beta & = \phi_{j}^{*} (\nabla_{j}'' \beta), \notag \\
\Theta(\nabla_{E}) \phi_{j}^{*} \beta & = \phi_{j}^{*} (\Theta(\nabla_{j}) \notag \beta)
\end{align}
for a differential form $\beta$ on $B^{n}_{j}$ valued in $E_{j}$. We have the following Bochner-Kodaira-Nakano identity for homologically orientable transversely K\"{a}hler foliations:  
\begin{lemma}\label{Lemma : Nakano Bochner Kodaira Nakano}
We get 
\begin{equation}\label{eq : BKN}
\Delta_{E}'' = \Delta_{E}' + [\sqrt{-1}\Theta(\nabla_{E}),\Lambda], 
\end{equation}
where $\Delta_{E}'' = [\nabla_{E}'',\nabla_{E}''^{\star}]$ and $\Delta_{E}' = [\nabla_{E}',\nabla_{E}'^{\star}]$.
\end{lemma}

\begin{proof}
We consider $U_{j}$ and $\phi_{j} \colon U_{j} \to B^{n}_{j}$ mentioned above in this Section~\ref{sec:BKN}. Since the formula is local, we can consider on each $U_{j}$. By the Bochner-Kodaira-Nakano identity (see Sections~4.5 and~4.6 of Demailly~\cite{Demailly 2} or Theorem 1.1 of Chapter VII of Demailly~\cite{Demailly 3}) for the Hermitian holomorphic line bundle $E_{j}$ on $B^{n}_{j}$, we have
\begin{equation}\label{Equation : Equation on B n 1}
\Delta_{E_{j}}'' = \Delta_{E_{j}}' + [\sqrt{-1}\Theta(\nabla_{j}),\Lambda_{j}],
\end{equation}
where $\Lambda_{j} = \overline{*}_{E_{j}^{*}} L_{j} \, \overline{*}_{E_{j}}$. Note that~\eqref{Equation : Equation on B n 1} is proved only by local calculation on a complex coordinate chart, which can be applied to $E_{j}$ and $B^{n}_{j}$ in our situation. Then~\eqref{eq : BKN} is proved by pulling back~\eqref{Equation : Equation on B n 1} by $\phi_{j}$ by~\eqref{eq : fa1},~\eqref{eq : fa2},~\eqref{eq : fa3} of Lemma~\ref{Lemma : formal adjoint} and five equations in~\eqref{eq:naturality}.
\end{proof}

\subsection{Hodge decomposition for basic Dolbeault complex valued on $E$}

The Hodge decomposition theorem of El~Kacimi~Alaoui is as follows in this context:
\begin{theorem}[Theorem 2.8.7 of El~Kacimi~Alaoui~\cite{El Kacimi Alaoui}]\label{Theorem : Hodge decomposition}
There is an orthogonal decomposition
\begin{equation*}
\Omega^{p,q}_{b}(M/\mathcal{F},E) = \ker \Delta_{E}'' \oplus \Image \nabla_{E}'' \oplus \Image \nabla_{E}''^{\star}
\end{equation*}
with respect to the inner product $(\cdot,\cdot)$, where $\Delta_{E}'' = [\nabla_{E}'',\nabla_{E}''^{\star}]$.
\end{theorem}
\begin{proof}[Outline of the proof of Theorem~\ref{Theorem : Hodge decomposition}]
Let
\begin{equation*}
E^{p,q} = \wedge^{p} (T^{*}M/T^{*}\mathcal{F})^{1,0} \otimes \wedge^{q} (T^{*}M/T^{*}\mathcal{F})^{0,1} \otimes E.
\end{equation*}
Then $\Omega^{p,q}_{b}(M/\mathcal{F},E)$ is identified with the space $C^{\infty}(E^{p,q},\mathcal{F})$ of global basic sections of $E^{p,q}$ on $(M,\mathcal{F})$. Let $\rho \colon M^{1} \to M$ be the orthonormal frame bundle of $TM/T\mathcal{F}$. We denote the horizontal lift of $\mathcal{F}$ to $M^{1}$ by $\mathcal{F}^{1}$. Let $M^{1} \to W$ be the basic fibration of $\mathcal{F}$. Let $E^{1} = \rho^{*}E^{p,q}$. Here $E^{1}$ is $\mathcal{F}^{1}$-fibered. $C^{\infty}(E^{p,q},\mathcal{F})$ is identified with the space $C^{\infty}(E^{1},\mathcal{F}^{1})^{\SO(2n)}$ of $\SO(2n)$-invariant basic section of $E^{1}$. The restriction of a global basic section $s$ of $E^{1}$ to the closure $\overline{L}$ of a leaf of $\mathcal{F}^{1}$ is determined by $s(x)$ at a point $x$ in $\overline{L}$. Thus $C^{\infty}(E^{1},\mathcal{F}^{1})$ can be identified with the space $C^{\infty}_{W}(\overline{E})$ of global sections of a vector bundle $\overline{E}$ over~$W$. The inner product $(\cdot,\cdot)$ induces a Hermitian metric of $\overline{E}$. El~Kacimi~Alaoui constructed a differential operator $\mathcal{D}' \colon C^{\infty}(E^{1},\mathcal{F}^{1}) \to C^{\infty}(E^{1},\mathcal{F}^{1})$, which is identified with a self-adjoint strongly elliptic operator $\overline{\mathcal{D}} \colon C^{\infty}_{W}(\overline{E}) \to C^{\infty}_{W}(\overline{E})$ and whose restriction to $C^{\infty}(E^{1},\mathcal{F}^{1})^{\SO(2n)}$ is identified with $\Delta_{E}''$. Then the Hodge decomposition $C^{\infty}(E^{p,q},\mathcal{F}) = \ker \Delta_{E}'' \oplus \Image \Delta_{E}''$ for $\Delta_{E}''$ is obtained by the restriction of the classical Hodge decomposition $C^{\infty}(\overline{E}) = \ker \overline{\mathcal{D}} \oplus \Image \overline{\mathcal{D}}$ for $\overline{\mathcal{D}}$ to $C^{\infty}(\overline{E})^{\SO(2n)}$. The equality $\Image \Delta_{E}'' = \Image \nabla_{E}'' \oplus \Image \nabla_{E}''^{\star}$in $\Omega^{p,q}_{b}(M/\mathcal{F},E)$ follows from decompositions $\Omega^{i,j}_{b}(M/\mathcal{F},E) = \ker \Delta_{E}'' \oplus \Image \Delta_{E}''$ for $(i,j) = (p,q-1)$, $(p,q+1)$ and $\Delta_{E}'' = (\nabla_{E}'' + \nabla_{E}''^{\star})^{2}$.
\end{proof}

\subsection{Serre duality valued on $E$}

By~\eqref{Equation : stokes 1} in Lemma~\ref{Lemma : Stokes}, the pairing $\widetilde{\wedge}$ defined in~\eqref{Equation : tilde wedge} induces a product on basic Dolbeault cohomology 
\begin{equation}\label{Equation : Pairing}
H^{p,q}_{b}(M/\mathcal{F},E) \times H^{n-p,n-q}_{b}(M/\mathcal{F},E^{*}) \longrightarrow \mathbb{C}.
\end{equation}
The Serre duality follows from the Hodge decomposition Theorem~\ref{Theorem : Hodge decomposition} and the formula of formal adjoint operators in Lemma~\ref{Lemma : formal adjoint} as in the case of complex manifolds:

\begin{theorem}\label{Theorem : Serre duality}
The pairing~\eqref{Equation : Pairing} is nondegenerate, which implies $H^{p,q}_{b}(M/\mathcal{F},E) \cong H^{n-p,n-q}_{b}(M/\mathcal{F},E^{*})^{*}$.
\end{theorem}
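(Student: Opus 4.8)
The plan is to run the classical Hodge-theoretic proof of Serre duality, which is now available thanks to Theorem~\ref{Theorem : Hodge decomposition}. First I would record that the pairing \eqref{Equation : Pairing} is already well defined on cohomology: this is precisely the content of \eqref{Equation : stokes 1} in Lemma~\ref{Lemma : Stokes}, since if $\alpha_{1}=\overline{\partial}_{E}\gamma_{1}$ or $\alpha_{2}=\overline{\partial}_{E^{*}}\gamma_{2}$ then the integral defining the pairing vanishes. So everything reduces to proving nondegeneracy.

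The main step is to show that $\overline{*}_{b,E}$ carries $\Delta_{\overline{\partial}_{E}}$-harmonic forms to $\Delta_{\overline{\partial}_{E^{*}}}$-harmonic forms, hence induces an isomorphism of the harmonic spaces $\mathbf{H}^{p,q}(M/\mathcal{F},E)$ and $\mathbf{H}^{n-p,n-q}(M/\mathcal{F},E^{*})$ that represent the two cohomology groups by Theorem~\ref{Theorem : Hodge decomposition}. For this I would use the formula $\overline{\partial}_{E}^{*}=-\overline{*}_{b,E^{*}}\,\overline{\partial}_{E^{*}}\,\overline{*}_{b,E}$ recorded just before Theorem~\ref{Theorem : Hodge decomposition}, together with its analogue obtained by interchanging $E$ and $E^{*}$, and the elementary identity $\overline{*}_{b,E^{*}}\circ\overline{*}_{b,E}=(-1)^{p+q}\id$ on $\Omega^{p,q}_{b}(M/\mathcal{F},E)$ (the transverse metric lives on a rank-$2n$ bundle, so the sign is the usual one for the Hodge star in even real dimension). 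From these, if $\alpha$ is harmonic then $\overline{\partial}_{E^{*}}(\overline{*}_{b,E}\alpha)$ and $\overline{\partial}_{E^{*}}^{*}(\overline{*}_{b,E}\alpha)$ are, up to sign, $\overline{*}_{b,E}$ applied to $\overline{\partial}_{E}^{*}\alpha$ and $\overline{\partial}_{E}\alpha$ respectively, hence both vanish; and $\overline{*}_{b,E}$ is injective. The one slightly fiddly point here, and where I expect to have to be careful, is checking that the $\mathbb{C}$-antilinear identification $h\colon E\to E^{*}$ and its inverse interact correctly with the conjugations built into $\overline{*}_{b,E}$ and $\overline{*}_{b,E^{*}}$; this is routine for a Hermitian line bundle but worth spelling out.

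Granting this, nondegeneracy follows quickly. Given a nonzero class in $H^{p,q}_{b}(M/\mathcal{F},E)$, take its harmonic representative $\alpha\neq 0$; then $\overline{*}_{b,E}\alpha$ represents a class in $H^{n-p,n-q}_{b}(M/\mathcal{F},E^{*})$, and the pairing \eqref{Equation : Pairing} of $[\alpha]$ with $[\overline{*}_{b,E}\alpha]$ equals $\int_{M}\mathcal{I}\big(\rho^{*}(\alpha\,\tilde{\wedge}\,\overline{*}_{b,E}\alpha)\wedge\chi\big)$, which is exactly $(\alpha,\alpha)$ in the inner product \eqref{Equation : inner product}, hence strictly positive. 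Thus the induced map $H^{p,q}_{b}(M/\mathcal{F},E)\to H^{n-p,n-q}_{b}(M/\mathcal{F},E^{*})^{*}$ is injective. Applying the same argument with $E$ replaced by $E^{*}$ and $(p,q)$ by $(n-p,n-q)$, using $(E^{*})^{*}=E$, gives injectivity in the other variable as well. Finally, both cohomology groups are finite-dimensional, since by the discussion preceding Theorem~\ref{Theorem : Hodge decomposition} they are the kernels of strongly elliptic operators on the compact manifold $W$ via the bundles $\overline{E}$ and $\overline{E^{*}}$; a bilinear pairing between finite-dimensional vector spaces that is nondegenerate in each variable is perfect, yielding the asserted isomorphism $H^{p,q}_{b}(M/\mathcal{F},E)\cong H^{n-p,n-q}_{b}(M/\mathcal{F},E^{*})^{*}$. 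The entire argument is a transcription of Proposition~4.1.15 of \cite{Huybrechts}; the only genuinely new input is the transverse Hodge decomposition already established in Theorem~\ref{Theorem : Hodge decomposition}, so beyond the sign and conjugation bookkeeping noted above I do not anticipate a serious obstacle.
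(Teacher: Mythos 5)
Your proposal is correct and follows exactly the route the paper takes: the paper's entire proof is the single remark that, given the basic Hodge decomposition (Theorem \ref{Theorem : Hodge decomposition}), Serre duality follows as in the compact complex manifold case (Proposition 4.1.15 of \cite{Huybrechts}), which is precisely the argument you have written out. The details you supply --- $\overline{*}_{b,E}$ preserving harmonicity via $\overline{\partial}_{E}^{*}=-\overline{*}_{b,E^{*}}\,\overline{\partial}_{E^{*}}\,\overline{*}_{b,E}$, positivity of the pairing of a harmonic $\alpha$ with $\overline{*}_{b,E}\alpha$, and finite-dimensionality from ellipticity on $W$ --- are the intended ones.
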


\begin{proof}
Let $\mathbf{H}^{p,q}_{b}(E)$ (resp. $\mathbf{H}^{p,q}_{b}(E^{*})$) be the space of basic harmonic $(p,q)$-forms valued on $E$ (resp. $E^{*}$). For each $\alpha$ in $\mathbf{H}_{b}^{p,q}(E)$, we have $\int_{M} \mathcal{I}(\alpha \widetilde{\wedge} \overline{*}_{b,E}\alpha \wedge \chi) \neq 0$. Equation~\eqref{eq : fa2} of Lemma~\ref{Lemma : formal adjoint} implies $\Delta_{E}'' = \overline{*}_{b,E^{*}} \Delta_{E^{*}}'' \overline{*}_{b,E}$. Thus $\overline{*}_{b,E}\alpha \in \mathbf{H}^{n-p,n-q}_{b}(E^{*})$. Then the pairing $\mathbf{H}^{p,q}(E) \times \mathbf{H}^{n-p,n-q}(E^{*}) \to \mathbb{C}$ defined by the restriction of~\eqref{Equation : Pairing} is nondegenerate, which concludes the proof of Theorem~\ref{Theorem : Serre duality}.
\end{proof}

\subsection{$\mathcal{F}$-fibered Positive line bundles}

Let $(E,h_{E})$ be a holomorphic $\mathcal{F}$-fibered Hermitian line bundle on $(M,\mathcal{F})$. We recall the definition of the positivity of $(E,h_{E})$ due to Boyer, Galicki and Nakamaye~\cite{Boyer Galicki Nakamaye}, which is defined in a way similar to the case of line bundles on complex manifolds:
\begin{definition}\label{Definition : Positivity of E}
$E$ is called {\it positive} if there exists a Hermitian metric $h$ on $E$ such that the basic first Chern form $\frac{\sqrt{-1}}{2\pi}\Theta(\nabla_{E})$ is a transverse K\"{a}hler form of $(\mathcal{F},J)$ for a transverse metric on $(M,\mathcal{F})$.
\end{definition}
The positivity of $E$ is determined by the first Chern class as follows:
\begin{proposition}[El~Kacimi~Alaoui, Section 3.5.6 of~\cite{El Kacimi Alaoui}]\label{Proposition : c 1}
If there exists a transverse K\"{a}hler metric on $TM/T\mathcal{F}$ with transverse K\"{a}hler form $\omega$ such that $[\frac{\sqrt{-1}}{2\pi}\Theta(\nabla_{E})] = [\omega]$ in $H^{2}_{b}(M/\mathcal{F})$, then there exists a Hermitian metric $h'$ on $E$ such that $(E,\widehat{h})$ is a holomorphic $\mathcal{F}$-fibered Hermitian line bundle whose curvature form $\Theta(\widehat{\nabla}_{E})$ satisfies $\frac{\sqrt{-1}}{2\pi}\Theta(\widehat{\nabla}_{E}) = \omega$.
\end{proposition}

\subsection{Proof of vanishing theorems of Kodaira-Akizuki-Nakano, Girbau and Grauert-Riemenschneider}

\noindent Theorem~\ref{Theorem : Kodaira Akizuki Nakano vanishing theorem} follows from~\eqref{eq : BKN} of Lemma~\ref{Lemma : Nakano Bochner Kodaira Nakano}~ as in the case of complex manifolds. The proof below is essentially taken from Section~4.9 of Demailly~\cite{Demailly 2}. 

\begin{proof}[Proof of Theorem~\ref{Theorem : Kodaira Akizuki Nakano vanishing theorem}]

We use the notation in Section~\ref{sec:BKN}. Let $x$ be a point on~$M$ and $\phi_{j} : U_{j} \to B_{j}^{n}$ be a member of the defining~$1$-cocycle of $\mathcal{F}$ which contains $x$. Let $\omega_{j}$ be a K\"{a}hler form on $B_{j}^{n}$ which satisfies $\omega|_{U_{j}} = \phi_{j}^{*}\omega_{j}$, where $\omega$ is the transverse K\"{a}hler form of $(M,\mathcal{F},J,g_{\nu})$. We can modify $\phi_{j}$ so that $(\omega_{j})_{\phi_{j}(x)} = \sum_{i=1}^{n} \sqrt{-1} dz_{i} \wedge d\overline{z}_{i}$ at $\phi_{j}(x)$ by the holomorphic coordinate change of $B_{j}^{n}$. We regard the curvature form $(\sqrt{-1}\Theta(\nabla_{j}))_{\phi_{j}(x)}$ of $E_{j}$ at $\phi_{j}(x)$ as a Hermitian matrix. By diagonalizing this Hermitian matrix $(\sqrt{-1}\Theta(\nabla_{j}))_{\phi_{j}(x)}$ by a unitary matrix, we get a holomorphic coordinate $(w_{1}, w_{2}, \ldots, w_{n})$ of $B_{j}^{n}$ such that $\omega_{\phi_{j}(x)} = \sum_{i=1}^{n} \sqrt{-1} dw_{i} \wedge d\overline{w}_{i}$ and $(\sqrt{-1}\Theta(\nabla_{j}))_{\phi_{j}(x)} = \sum_{i=1}^{n} \sqrt{-1} \lambda_{i}(x) dw_{i} \wedge d\overline{w}_{i}$ for some real numbers $\lambda_{i}(x)$. The assumption of the positivity of $E$ implies that $\lambda_{i}(x)$ is positive. We reorder $\lambda_{i}(x)$ so that $\lambda_{1}(x) \leq \lambda_{2}(x) \leq \cdots \leq \lambda_{n}(x)$. We consider a vector bundle
\begin{equation*}
E^{p,q}=\wedge^{p} (T^{*}M/T^{*}\mathcal{F})^{1,0} \otimes \wedge^{q} (T^{*}M/T^{*}\mathcal{F})^{0,1} \otimes E
\end{equation*}
over $(M,\mathcal{F})$. Then $E^{p,q}$ is $\mathcal{F}$-fibered, and basic $(p,q)$-forms are regarded as basic sections of $E^{p,q}$. Let
\begin{equation*}
E^{p,q}_{j} = \wedge^{p} (T^{*}B^{n}_{j})^{1,0} \otimes \wedge^{q} (T^{*}B^{n}_{j})^{0,1} \otimes E_{j}
\end{equation*}
be a vector bundle over $B^{n}_{j}$. Then we get $E^{p,q}|_{U_{j}} = \phi_{j}^{*}E^{p,q}_{j}$. Let $g^{p,q}$ be the metric on $E^{p,q}$ induced from $g$ and $h_{E}$. Let $g^{p,q}_{j}$ be the metric on $E^{p,q}_{j}$ which satisfies $\phi_{j}^{*}g^{p,q}_{j} = g^{p,q}$. By an argument of linear algebra on $(E^{p,q}_{j})_{\phi_{j}(x)}$, we get
\begin{equation*}
g^{p,q}_{j}([\sqrt{-1}\Theta(\nabla_{j}),\Lambda_{j}] \alpha_{j}, \alpha_{j}) \geq \Big( \sum_{i=1}^{\min\{p,q\}} \lambda_{i}(x) - \sum_{i=\max\{p,q\}+1}^{n}\lambda_{i}(x) \Big) g^{p,q}_{j}(\alpha_{j},\alpha_{j})
\end{equation*}
for $\alpha_{j}$ in $(E^{p,q}_{j})_{\phi_{j}(x)}$. Then, for a basic $(p,q)$-form $\alpha$ valued in $E$, we get
\begin{equation}\label{eq : ineq}
g^{p,q}_{x}([\sqrt{-1}\Theta(\nabla_{E}),\Lambda] \alpha_{x}, \alpha_{x}) \geq \Big( \sum_{i=1}^{\min\{p,q\}} \lambda_{i}(x) -  \sum_{i=\max\{p,q\}+1}^{n}\lambda_{i}(x) \Big) g^{p,q}_{x}(\alpha_{x},\alpha_{x})
\end{equation}
at each point $x$ on~$M$. Note that $\lambda_{j}(x)$ is a leafwise constant continuous function on~$M$. Since $\alpha_{x} \widetilde{\wedge} \overline{*}_{b,E} \beta_{x} = g^{p,q}_{x}(\alpha_{x},\beta_{x}) \omega^{n}$ at each point $x$ on~$M$ for $\alpha$ and $\beta$ in $\Omega_{b}^{p,q}(M/\mathcal{F},E)$, we get  
\begin{equation*}
([\sqrt{-1}\Theta(\nabla_{E}),\Lambda] \alpha, \alpha) \geq \int_{W} \Big( \sum_{i=1}^{q} \lambda_{i} - \sum_{i=p+1}^{n}\lambda_{i}  \Big) \mathcal{I}(\rho^{*}(\alpha \widetilde{\wedge} \overline{*}_{b,E} \alpha ) \wedge \chi )
\end{equation*}
by~\eqref{eq : ineq} (see Section~\ref{sec : KH} for the definition of the inner product $(\cdot,\cdot)$ and the right hand side). Since $(\Delta''_{E}\alpha,\alpha) = ||\nabla_{E}''\alpha||^{2} + ||\nabla_{E}''^{\star}\alpha||^{2}$ and $(\Delta'_{E}\alpha,\alpha) \geq 0$, the Bochner-Kodaira-Nakano identity~\eqref{eq : BKN} of Lemma~\ref{Lemma : Nakano Bochner Kodaira Nakano} (ii) implies
\begin{equation*}
||\nabla_{E}''\alpha||^{2} + ||\nabla_{E}''^{\star}\alpha||^{2} \geq ([\sqrt{-1}\Theta(\nabla_{E}),\Lambda] \alpha, \alpha).
\end{equation*}
By Proposition~\ref{Proposition : c 1}, we fix a transverse K\"{a}hler metric on $(M,\mathcal{F},J)$ whose transverse K\"{a}hler form is equal to $\sqrt{-1}\Theta(\nabla_{E})$. Then $\lambda_{j}(x)=1$ for every $j$ and $x$. We get
\begin{equation*}
||\nabla_{E}''\alpha||^{2} + ||\nabla_{E}''^{\star}\alpha||^{2} \geq (p+q - n) ||\alpha||^{2}.
\end{equation*}
Thus if $\alpha$ is harmonic and $p + q > n$, we get $\alpha=0$.
\end{proof}

The analog of Grauert-Riemenschneider's vanishing theorem~\cite{Grauert Riemenschneider} (Theorem~\ref{Theorem : GR}) and Girbau's vanishing theorem~\cite{Girbau} (Theorem~\ref{Theorem : G})  also follow from the Bochner-Kodaira-Nakano equality as in~\cite{Demailly}: Let $(M,\mathcal{F})$ be a closed manifold with a homologically orientable transversely K\"{a}hler foliation. Let $(E,h_{E})$ be a holomorphic $\mathcal{F}$-fibered Hermitian line bundle on~$(M,\mathcal{F})$. The rank of $\Theta(\nabla_{E})$ at a point $x$ on $M$ is defined by the number of positive eigenvalues as a Hermitian matrix.
\begin{theorem}\label{Theorem : GR}
If $\Theta(\nabla_{E})$ has rank at least equal to $s$ at some point on $M$, then 
\begin{equation*}
H^{0,q}_{b}(M/\mathcal{F},E^{*}) = 0
\end{equation*}
for $q < s$.
\end{theorem}
\begin{theorem}\label{Theorem : G} 
If $\Theta(\nabla_{E})$ has rank at least equal to $s$ at every point on $M$, then
\begin{equation*}
H^{p,q}_{b}(M/\mathcal{F},E^{*}) = 0
\end{equation*}
for $p+q < s$.
\end{theorem}
\noindent We omit the proof of Theorems~\ref{Theorem : GR} and~\ref{Theorem : G}, which are similar to the argument of Demailly in the proof of Proposition in Section~2 of~\cite{Demailly} .

\begin{remark}
We remark on the other possibility of the inner product on the basic de Rham complex. \'{A}lvarez L\'{o}pez~\cite{Alvarez Lopez} proved the Hodge decomposition theorem for basic de Rham complex with respect to the inner product $\langle \langle \cdot,\cdot  \rangle \rangle$ defined by the restriction of the usual inner product of the de Rham complex. Note that the formal adjoint of $d$ with respect to $\langle \langle \cdot,\cdot \rangle \rangle$ is not given by the basic Hodge star operator as remarked in Remark~\ref{Remark : adj}. Hence Lemma~\ref{Lemma : formal adjoint} is not true in general for $\langle \langle \cdot,\cdot \rangle \rangle$. To show Theorem~\ref{Theorem : Kodaira Akizuki Nakano vanishing theorem} using $\langle \langle \cdot,\cdot \rangle \rangle$, one can apply Masa's theorem in~\cite{Masa} for the existence of a minimal metric on homologically orientable foliations. Since the mean curvature form is zero for a minimal metric, the adjoint of $d$ with respect to $\langle \langle \cdot,\cdot \rangle \rangle$ is given by its conjugation of $d$ by the basic Hodge operator. Then Lemma~\ref{Lemma : formal adjoint} is true for $\langle \langle \cdot,\cdot \rangle \rangle$. Then the rest of the argument is the same as above.
\end{remark}

\subsection{Proof of vanishing Theorem for positive Sasakian manifolds}\label{Section : pos}

We will prove Theorem~\ref{Theorem : Vanishing} by Theorem~\ref{Theorem : Kodaira Akizuki Nakano vanishing theorem}. Let $(\mathcal{F},J,g_{\nu})$ be a complex codimension $n$ transversely K\"{a}hler foliation on a closed manifold~$M$. We recall
\begin{definition}
A holomorphic $\mathcal{F}$-fibered Hermitian line bundle
\begin{equation*}
K_{\mathcal{F}} = \wedge^{n} (T^{*}M/T^{*}\mathcal{F})^{1,0}
\end{equation*}
on~$M$ is called the {\it canonical line bundle} of $(M,\mathcal{F},J)$. The dual of the canonical line bundle of $(M,\mathcal{F},J)$ is called the {\it anticanonical line bundle} of $(M,\mathcal{F},J)$. A Sasakian manifold is called {\it positive} if the anticanonical line bundle of the underlying transversely K\"{a}hler flow is positive.
\end{definition}
\noindent We refer to Boyer, Galicki and Nakamaye~\cite{Boyer Galicki Nakamaye} for more detailed information on positive Sasakian manifolds.

We assume that the homologically orientability of $\mathcal{F}$ to apply the results in this Section~\ref{Section : KAN}. Let $K_{\mathcal{F}}$ be the canonical line bundle of $(M,\mathcal{F})$. We fix a transverse Hermitian metric on $(TM/T\mathcal{F}) \otimes \mathbb{C}$. Let $h_{K_{\mathcal{F}}}$ be the Hermitian metric on $K_{\mathcal{F}}$ induced from the metric on $(TM/T\mathcal{F}) \otimes \mathbb{C}$. We recall a $\mathbb{C}$-antilinear isomorphism
\begin{equation*}
h \colon K_{\mathcal{F}} \longrightarrow K_{\mathcal{F}}^{*}
\end{equation*}
defined by $h(s) = h_{K_{\mathcal{F}}}(s,\cdot)$. Note that the canonical line bundle $(K_{\mathcal{F}},h_{K_{\mathcal{F}}})$ of $\mathcal{F}$ is a holomorphic  $\mathcal{F}$-fibered Hermitian line bundle.

Let $\mathbf{1}_{K_{\mathcal{F}}}$ be the identity in $\End(K_{\mathcal{F}}) = C^{\infty}(K_{\mathcal{F}} \otimes K_{\mathcal{F}}^{*})$. We have natural maps
\begin{align*}
\Xi_{1} \colon \Omega_{b}^{0,q}(M/\mathcal{F}) \longrightarrow \Omega_{b}^{n,q}(M/\mathcal{F},K_{\mathcal{F}}^{*}), \\ 
\Xi_{2} \colon \Omega_{b}^{n,q}(M/\mathcal{F}) \longrightarrow \Omega_{b}^{0,q}(M/\mathcal{F},K_{\mathcal{F}})
\end{align*}
defined by $\Xi_{1}(\alpha) = \alpha \wedge \mathbf{1}_{K_{\mathcal{F}}} $ and $\Xi_{2}(\beta \wedge \alpha) = \alpha \otimes \beta$ for $\beta$ in $\Omega_{b}^{0,q}(M/\mathcal{F})$ and $\beta$ in $\Omega_{b}^{n,0}(M/\mathcal{F}) = C^{\infty}(K_{\mathcal{F}})$. Clearly $\Xi_{1}$ and $\Xi_{2}$ are bijective.

Let $\mathbf{H}_{b}^{p,q}$ be the space of basic harmonic $(p,q)$-forms. Let $\mathbf{H}_{b}^{p,q}(K_{\mathcal{F}}^{*})$ be the space of basic harmonic $(p,q)$-forms with values in $K_{\mathcal{F}}^{*}$.
\begin{lemma}\label{Lemma : duality}
\begin{equation*}
\Xi_{1}(\mathbf{H}_{b}^{0,q}) = \mathbf{H}_{b}^{n,q}(K_{\mathcal{F}}^{*}). \label{Equation : duality 2}
\end{equation*}
\end{lemma}

\begin{proof}
It is easy to see
\begin{align}
\label{Equation : theta and del} \Xi_{1} \overline{\partial} = \nabla_{K_{\mathcal{F}}^{*}}'' \Xi_{1}, \\
\label{Equation : xi and del} \Xi_{2} \overline{\partial} = \nabla_{K_{\mathcal{F}}}'' \Xi_{2}, 
\end{align}
where~\eqref{Equation : theta and del} follows from the fact that $\mathbf{1}_{K_{\mathcal{F}}}$ is a holomorphic section of $K_{\mathcal{F}} \otimes K_{\mathcal{F}}^{*}$. By~\eqref{Equation : theta and del} and the bijectivity of $\Xi_{1}$, we get 
\begin{equation}\label{Equation : T1}
\Xi_{1} (\ker \overline{\partial}) = \ker \nabla_{K_{\mathcal{F}}^{*}}''.
\end{equation}

Let $s$ be a local basic section of $K_{\mathcal{F}}$ such that $h(s,s)=1$. It is easy to see 
\begin{equation*}
s \otimes h(s) = \mathbf{1}_{K_{\mathcal{F}}}.
\end{equation*}
By definition of $\overline{*}_{b}$ and $h(s,s)=1$, we have
\begin{equation*}
\overline{*}_{b} \alpha = (-1)^{nq} s \wedge \overline{*}_{b} (s \wedge \alpha).
\end{equation*}
Thus we get
\begin{align}
\textstyle \overline{*}_{b,K_{\mathcal{F}}^{*}} \Xi_{1} (\alpha) 
& \textstyle = \overline{*}_{b,K_{\mathcal{F}}^{*}} \big( \alpha \wedge \mathbf{1}_{K_{\mathcal{F}}} \big) \notag \\
& \textstyle = \overline{*}_{b,K_{\mathcal{F}}^{*}} \big( \alpha \wedge s \otimes h(s) \big) \notag \\
& \textstyle = \overline{*}_{b} (\alpha \wedge s) \otimes s \label{Equation : * and theta} \\
& \textstyle = \Xi_{2} \big( s \wedge \overline{*}_{b} (\alpha \wedge s) \big) \notag \\
& \textstyle = \Xi_{2} \overline{*}_{b} \alpha \notag.
\end{align}
By substituting $\overline{*}_{b} \beta$ to $\alpha$ in~\eqref{Equation : * and theta} and composing $\overline{*}_{b,K_{\mathcal{F}}}$ to both sides of the equation, we get 
\begin{equation}\label{Equation : * and theta 2}
\Xi_{1} \overline{*}_{b} = \overline{*}_{b,K_{\mathcal{F}}} \Xi_{2}.
\end{equation}
By~\eqref{eq : fa2} of Lemma~\ref{Lemma : formal adjoint}, we get $\overline{\partial}^{\star}  = -\overline{*}_{b} \overline{\partial} \overline{*}_{b}$. Hence, by equations~\eqref{Equation : xi and del},~\eqref{Equation : * and theta} and~\eqref{Equation : * and theta 2}, we get
\begin{equation}\label{Equation : com*}
\Xi_{1} \overline{\partial}^{\star} = - \Xi_{1} \overline{*}_{b} \overline{\partial} \overline{*}_{b} = - \overline{*}_{b,K_{\mathcal{F}}} \nabla_{K_{\mathcal{F}}}'' \overline{*}_{b,K_{\mathcal{F}}^{*}} \Xi_{1} = \nabla_{K_{\mathcal{F}}^{*}}''^{\star} \Xi_{1}.
\end{equation}

By~\eqref{Equation : com*} and the bijectivity of $\Xi_{1}$, we get
\begin{equation}\label{Equation : T2}
\Xi_{1} (\ker \overline{\partial}^{\star}) = \ker \nabla_{K_{\mathcal{F}}^{*}}''^{\star}.
\end{equation}
Since $\mathbf{H}_{b}^{0,q} = \ker \overline{\partial} \cap \ker \overline{\partial}^{\star}$ and $\mathbf{H}_{b}^{n,q}(K_{\mathcal{F}}^{*}) = \ker \nabla_{K_{\mathcal{F}}^{*}}'' \cap \ker \nabla_{K_{\mathcal{F}}^{*}}''^{\star}$ by the Hodge decomposition theorem of El~Kacimi~Alaoui (see Theorem~\ref{Theorem : Hodge decomposition}), the proof is done by~\eqref{Equation : T1} and~\eqref{Equation : T2}.
\end{proof}

\begin{remark}
We can prove $\Xi_{2}(\mathbf{H}_{b}^{n,q}) = \mathbf{H}_{b}^{0,q}(K_{\mathcal{F}})$ similarly.
\end{remark}

Lemma~\ref{Lemma : duality} allows us to prove Theorem~\ref{Theorem : Vanishing} by Theorem~\ref{Theorem : Kodaira Akizuki Nakano vanishing theorem} without using the sheaf cohomology of basic holomorphic forms.
\begin{proof}[Proof of Theorem~\ref{Theorem : Vanishing}]
It is known that The Reeb flow of a Sasakian manifold is homologically orientable as we saw in Example~\ref{Example : RS}. Let $\mathbf{H}_{b}^{p,q}$ be the space of $\Delta_{\overline{\partial}}$-harmonic basic $(p,q)$-forms on $(M,\mathcal{F})$. By the K\"{a}hler identity for homologically orientable transversely K\"{a}hler foliations by El~Kacimi~Alaoui (Proposition~3.4.5 of~\cite{El Kacimi Alaoui}), $\mathbf{H}_{b}^{p,q}$ is equal to the space of $\Delta_{d}$-harmonic $(p,q)$-forms, where $\Delta_{d} = [d,d^{\star}]$. Since $\Delta_{d}$ commutes with the complex conjugation, we get
\begin{equation*}
\mathbf{H}_{b}^{p,0} \cong \mathbf{H}_{b}^{0,p}
\end{equation*}
by the complex conjugation. By Lemma~\ref{Lemma : duality}, we get
\begin{equation*}
\mathbf{H}_{b}^{0,p} \cong \mathbf{H}_{b}^{n,p}(K_{\mathcal{F}}^{*}),
\end{equation*}
where $\mathbf{H}_{b}^{n,p}(K_{\mathcal{F}}^{*})$ is the space of $\Delta''_{K_{\mathcal{F}}^{*}}$-harmonic basic $(n,p)$-forms on $(M,\mathcal{F})$ valued in $K_{\mathcal{F}}^{*}$. Since $K_{\mathcal{F}}^{*}$ is positive, Theorem~\ref{Theorem : Kodaira Akizuki Nakano vanishing theorem} implies that $\mathbf{H}_{b}^{n,p}(K_{\mathcal{F}}^{*})$ vanishes for $p > 0$.
\end{proof}

\section{Moduli space of Sasakian metrics with a fixed underlying transverse K\"{a}hler flow}\label{Section : Moduli}

We will prove Theorem~\ref{Theorem : H1}.

Let~$M$ be a closed manifold with a Sasakian metric with contact form $\eta$. Let $\mathfrak{k}_{0}$ be the underlying transversely K\"{a}hler flow. Recall that $\Diff_{0}(\mathfrak{k}_{0})$ is the identity component of the subgroup of $\Diff(M)$ consisting of diffeomorphisms which preserve $\mathfrak{k}_{0}$, and $\Ham(\mathfrak{k}_{0})$ is defined in~\eqref{Equation : Ham}. Obviously we have an exact sequence
\begin{equation*}
\xymatrix{ 0 \ar[r] & \Ham(\mathfrak{k}_{0}) \ar[r] & \Diff_{0}(\mathfrak{k}_{0}) \ar[r]^{\Pi} & H^{1}(M;\mathbb{R}),}
\end{equation*}
where $\Pi$ is defined by $\Pi(f) = [\eta-f^{*}\eta]$ for $f$ in $\Diff_{0}(\mathfrak{k}_{0})$. Note that $\eta -f^{*}\eta$ is closed, because $f$ preserves the transverse K\"{a}hler form $d\eta$.

Recall that we denote the subgroup of $\Diff(M)$ consisting of diffeomorphisms which map each leaf of $\mathcal{F}$ to itself by $\Diff(M,\mathcal{F})$. The identity component of  $\Diff(M,\mathcal{F})$ with respect to the Fr\'{e}chet topology is denoted by $\Diff_{0}(M,\mathcal{F})$. Note that $\Diff_{0}(M,\mathcal{F})$ is a subgroup of $\Ham(\mathfrak{k}_{0})$.
\begin{lemma}\label{Lemma : Flow}
Let $\xi$ be a nowhere vanishing vector field on a closed manifold~$M$ which is tangent to a flow $\mathcal{F}$. Let $\eta$ be a~$1$-form on~$M$ which satisfies $\mathcal{L}_{\xi}\eta=0$ and $\eta(\xi)=1$. Let $h$ be a leafwise constant smooth function on $(M,\mathcal{F})$. Then there exists $f$ in $\Diff_{0}(M,\mathcal{F})$ such that
\begin{equation}\label{Equation : f and eta}
f^{*}\eta = \eta + dh.
\end{equation}
\end{lemma}

\begin{proof}
Let $\{\varphi_{s}\}_{s \in \mathbb{R}}$ be the flow generated by $\xi$. We consider maps
\begin{equation*}
\begin{array}{cccc}
\Gamma \colon & M \times \mathbb{R} & \longrightarrow & M \times \mathbb{R} \\
         &  (x,s)              & \longmapsto     & (x,h(x)s)
\end{array}
\end{equation*}
and
\begin{equation*}
\begin{array}{cccc}
\Psi \colon & M \times \mathbb{R} & \longrightarrow & M \times \mathbb{R} \\
            &  (x,s)              & \longmapsto     & (\varphi_{s}(x),s).
\end{array}
\end{equation*}
Let
\begin{equation*}
f = \pr_{1} \circ \Psi \circ \Gamma|_{M \times \{1\}},
\end{equation*}
where $\pr_{1} \colon M \times \mathbb{R} \longrightarrow M$ is the first projection. The differential map $Tf$ of $f$ at $(x,s)$ is 
\begin{equation}\label{Equation : differential}
Tf = T(\pr_{1} \circ \Psi \circ \Gamma|_{M \times \{1\}} ) = T\varphi_{1} + s (dh \otimes \xi)
\end{equation}
Then $T\varphi_{1} + s dh \otimes \xi$ is nondegenerate, because $h$ is constant on each leaf of $\mathcal{F}$. So $f$ is an open immersion. Since an open immersion from a closed manifold to a closed manifold of degree~$1$ is a diffeomorphism, $f$ is a diffeomorphism of~$M$. By~\eqref{Equation : differential}, we have
\begin{equation*}
f^{*}\eta(Y) = \eta ((T\varphi_{1} + dh \otimes \xi)(Y)) = \varphi_{1}^{*}\eta(Y) + dh(Y) = \eta(Y) + dh(Y)
\end{equation*}
for a vector field $Y$ on~$M$. Thus $f$ satisfies the equation~\eqref{Equation : f and eta}.
\end{proof}

Recall that a Sasakian metric on~$M$ is determined by a contact form $\eta$ whose Reeb flow has a transversely K\"{a}hler structure such that $d\eta$ is the transverse K\"{a}hler form (see Proposition~\ref{Proposition : Transversely Kahler}). Let $(M,\eta,{g})$ be a closed Sasakian manifold. We denote the Reeb flow of $\eta$ by $\mathcal{F}$. We denote the underlying transversely K\"{a}hler foliation by $\mathfrak{k}_{0}$. For a basic closed~$1$-form $\beta$ on $(M,\mathcal{F})$, we can construct a Sasakian metric $\sigma_{\beta}$ determined by a contact form $\eta + \beta$ and the same transversely K\"{a}hler structure of $\mathcal{F}$ as $(\eta,{g})$. We consider the set of Sasakian metrics
\begin{equation*}
\mathcal{S}_{1}=\{ \sigma_{\beta} \, | \, \beta \in \mathbf{H}^{1}_{b} \},
\end{equation*}
where $\mathbf{H}^{1}_{b}$ is the space of basic harmonic~$1$-form on $(M,\mathcal{F})$.

\begin{proposition}\label{Proposition : Intersects}
Every orbit of the action of $\Ham(\mathfrak{k}_{0})$ on $\mathcal{S}$ intersects $\mathcal{S}_{1}$.
\end{proposition}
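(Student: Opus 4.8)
The plan is as follows. Let $\mathfrak{k}_{0}=(\mathcal{F},I,g)$, with transverse K\"ahler form $\omega$, be the underlying transversely K\"ahler flow of the given Sasakian metric $(\eta,\tilde{g})$, so that $d\eta=\omega$; let $\sigma$ be any Sasakian metric with underlying flow $\mathfrak{k}_{0}$ and contact form $\alpha$, so that also $d\alpha=\omega$. Then $\gamma:=\alpha-\eta$ is a closed $1$-form, and it suffices to find $f\in\Ham(\mathfrak{k}_{0})$ with $f^{*}\alpha=\eta+\beta$ for some basic harmonic $1$-form $\beta$: indeed $f^{*}\sigma$ then has underlying flow $f^{*}\mathfrak{k}_{0}=\mathfrak{k}_{0}$ and contact form $\eta+\beta$, so $f^{*}\sigma=\sigma_{\beta}\in\mathcal{S}_{1}$ by Lemma \ref{Lemma : Transversely Kahler}.

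The first step is to choose the target form $\eta+\beta$. Exactly as in the proof of Proposition \ref{Proposition : Construction of Sasakian metrics}, the class $[\omega]=[d\eta]$ is nonzero in $H^{2}_{b}(M/\mathcal{F})$, because $[\omega]^{n}=[\omega^{n}]$ is nonzero in $H^{2n}_{b}(M/\mathcal{F})$ (the flow being homologically orientable); hence the cup product $L\colon H^{0}_{b}(M/\mathcal{F};\mathbb{R})\to H^{2}_{b}(M/\mathcal{F};\mathbb{R})$ is injective, and the Gysin sequence \eqref{Equation : Gysin for isometric flows} shows that inclusion induces an isomorphism $H^{1}_{b}(M/\mathcal{F};\mathbb{R})\xrightarrow{\ \sim\ }H^{1}(M;\mathbb{R})$. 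Therefore $[\gamma]$ is represented by a basic closed $1$-form, and applying the Hodge decomposition of the basic de Rham complex of the homologically orientable transversely K\"ahler foliation $(M,\mathcal{F})$ to that representative produces a basic harmonic $1$-form $\beta$ and a function $G\in C^{\infty}(M)$ with $\gamma=\beta+dG$. Set $\tilde{\alpha}:=\eta+\beta$. Since $\beta$ is basic, $d\tilde{\alpha}=\omega$ and $\tilde{\alpha}\wedge\omega^{n}=\eta\wedge\omega^{n}$ is nowhere zero, so $\tilde{\alpha}$ is a contact form and determines the Sasakian metric $\sigma_{\beta}\in\mathcal{S}_{1}$, and $\alpha=\tilde{\alpha}+dG$.

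The second step is a Moser-type straightening of $\alpha$ to $\tilde{\alpha}$ by an isotopy in $\Diff_{0}(\mathfrak{k}_{0})$. I would take the path $\alpha_{s}:=\tilde{\alpha}+s\,dG$, $s\in[0,1]$, from $\tilde{\alpha}$ to $\alpha$. Each $\alpha_{s}$ satisfies $d\alpha_{s}=\omega$, and, using that the Reeb vector fields of any two Sasakian metrics with underlying flow $\mathfrak{k}_{0}$ are positively proportional (so $\alpha_{s}(\xi)=(1-s)+s\,\alpha(\xi)>0$, where $\xi$ is the Reeb vector field of $\eta$), each $\alpha_{s}$ is a contact form; let $\xi^{\alpha_{s}}\in C^{\infty}(T\mathcal{F})$ denote its Reeb vector field, which depends smoothly on $s$. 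Let $f_{s}$ be the isotopy with $f_{0}=\id$ generated by the time-dependent vector field $X_{s}:=-G\,\xi^{\alpha_{s}}$. Then $\iota_{X_{s}}\alpha_{s}=-G$ and $\iota_{X_{s}}d\alpha_{s}=\iota_{X_{s}}\omega=0$, so $\mathcal{L}_{X_{s}}\alpha_{s}=-dG$ and hence $\frac{d}{ds}\bigl(f_{s}^{*}\alpha_{s}\bigr)=f_{s}^{*}\bigl(\mathcal{L}_{X_{s}}\alpha_{s}+dG\bigr)=0$; thus $f_{s}^{*}\alpha_{s}=\tilde{\alpha}$ for all $s$, and $f:=f_{1}$ satisfies $f^{*}\alpha=\tilde{\alpha}$. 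Because $X_{s}$ is tangent to $\mathcal{F}$, the isotopy $f_{s}$ maps each leaf of $\mathcal{F}$ to itself, and the defining properties $\mathcal{L}_{X_{s}}I=0$ and $\mathcal{L}_{X_{s}}g=0$ of a transversely K\"ahler foliation give $\frac{d}{ds}\bigl(f_{s}^{*}I\bigr)=0$ and $\frac{d}{ds}\bigl(f_{s}^{*}g\bigr)=0$, so $f_{s}^{*}I=I$ and $f_{s}^{*}g=g$; hence $f\in\Diff_{0}(\mathfrak{k}_{0})$. Finally $\eta-f^{*}\eta$ is closed (because $f$ preserves $\omega$) with trivial class in $H^{1}(M;\mathbb{R})$ (because $f$ is isotopic to the identity), so it is exact and $f\in\Ham(\mathfrak{k}_{0})$; this completes the argument.

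The step I expect to be the main obstacle is the last one: one must arrange the straightening diffeomorphism to preserve the \emph{whole} transversely K\"ahler flow $\mathfrak{k}_{0}$, not merely the foliation $\mathcal{F}$. The point that makes it work is that the exact part of $\gamma$ has an honest primitive $G$ on $M$, which can be used as the contact Hamiltonian; this forces the Moser vector field $X_{s}=-G\,\xi^{\alpha_{s}}$ to be tangent to $\mathcal{F}$, and a leafwise vector field automatically annihilates both $I$ and $g$ under the transverse Lie derivative, so its flow lies in $\Diff_{0}(\mathfrak{k}_{0})$. The subsidiary points needing care---that the path $\alpha_{s}$ stays contact (positivity of the Reeb fields) and the identification $H^{1}_{b}(M/\mathcal{F};\mathbb{R})\cong H^{1}(M;\mathbb{R})$---are comparatively routine.
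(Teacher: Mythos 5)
Your argument is correct, but it follows a genuinely different route from the paper's. The paper proceeds in two stages: it first invokes Lemma \ref{Lemma : Moser} (the leafwise Moser lemma, resting on the fact that the leafwise class of a characteristic form generates the one-dimensional $E^{0,1}_{2}$-term) to arrange $\eta|_{T\mathcal{F}} = r f_{1}^{*}(\eta_{1}|_{T\mathcal{F}})$ and then shows $r=1$ from $[d\eta]\neq 0$; this reduces to the case where the difference of contact forms is a closed \emph{basic} $1$-form, whose exact part $dh_{2}$ (with $h_{2}$ basic) is then absorbed by Lemma \ref{Lemma : Flow}(iii), i.e.\ by an explicit reparametrization $\phi_{h(x)t}$ of the Reeb flow. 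You instead work with the full closed $1$-form $\gamma=\alpha-\eta$, use the Gysin sequence (exactly as in the proof of Proposition \ref{Proposition : Construction of Sasakian metrics}) to identify $H^{1}_{b}(M/\mathcal{F};\mathbb{R})\cong H^{1}(M;\mathbb{R})$ and so write $\gamma=\beta+dG$ with $\beta$ basic harmonic but $G$ an arbitrary smooth function, and then absorb $dG$ in one stroke by a contact Moser isotopy generated by the leafwise field $-G\,\xi^{\alpha_{s}}$; tangency to $\mathcal{F}$ automatically gives preservation of $I$ and $g$. This bypasses both Lemma \ref{Lemma : Moser} and Lemma \ref{Lemma : Flow} and also the normalization step $r=1$, at the cost of needing $[\omega]\neq 0$ in $H^{2}_{b}$ (for the Gysin argument) and the positivity $\alpha(\xi)>0$ to keep the path $\alpha_{s}$ contact. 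Two small points should be tightened. First, the assertion that the Reeb fields are \emph{positively} proportional is what you are trying to use, so justify it directly: $\alpha(\xi)=1+\xi(G)$ is nowhere zero (since $\xi_{\alpha}=c\xi$ with $c\,\alpha(\xi)=1$) and equals $1$ at any critical point of $G$, hence is positive on connected $M$. Second, "$[\eta-f^{*}\eta]=0$ because $f$ is isotopic to the identity" is not literally an application of homotopy invariance since $\eta$ is not closed; but it is unnecessary, because $f^{*}\eta=f^{*}\alpha-f^{*}\gamma=\eta+\beta-f^{*}\beta-d(G\circ f)=\eta-d(G\circ f)$ (using that $f\in\Diff_{0}(M,\mathcal{F})$ acts trivially on basic forms), so $\eta-f^{*}\eta$ is exact outright.
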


\begin{proof}
Take a Sasakian metric $(\eta_{1},g_{1})$ whose underlying transversely K\"{a}hler flow is isomorphic to $\mathfrak{k}_{0}$. The isomorphism of transversely K\"{a}hler flows means $d\eta = d\eta_{1}$ and the transverse metrics on $\mathcal{F}$ induced by $g$ and $g_{1}$ are equal. By Lemma~\ref{Lemma : Moser}, there exist a real number $r$ and a diffeomorphism $f_{1}$ of~$M$ in $\Diff_{0}(M,\mathcal{F})$ such that
\begin{equation*}
\eta|_{T\mathcal{F}} = r f_{1}^{*}(\eta_{1}|_{T\mathcal{F}}).
\end{equation*}
Let $\eta_{2} = f_{1}^{*} \eta_{1}$. Since $(\eta - r \eta_{2})|_{T\mathcal{F}}=0$ and $d(\eta - r \eta_{2})$ is basic, $\eta - r \eta_{2}$ is basic. Hence we have
\begin{equation*}
[d\eta] = r[d\eta_{2}] = r[d\eta_{1}]
\end{equation*}
in $H^{2}_{b}(M/\mathcal{F})$. Note that $\Diff_{0}(M,\mathcal{F})$ trivially acts on the basic forms by the definition. Since we have $[d\eta_{1}] = [d\eta]$ by the assumption, we have $r=1$. Thus $\eta - \eta_{2}$ is a closed basic~$1$-form. By the basic Hodge decomposition for Riemannian foliations (see El~Kacimi~Alaoui and Hector~\cite{El Kacimi Alaoui Hector} or \'{A}lvarez L\'{o}pez~\cite{Alvarez Lopez}), there exists a basic harmonic~$1$-form $\beta$ and a smooth basic function $h_{2}$ such that
\begin{equation*}
\eta - \eta_{2} = \beta + dh_{2}.
\end{equation*}
Let $\xi$ be the common Reeb vector field of $\eta$ and $\eta_{2}$. By Lemma~\ref{Lemma : Flow}, we have a diffeomorphism $f$ of~$M$ in $\Diff_{0}(M,\mathcal{F})$ such that
\begin{equation*}
f_{2}^{*} \eta_{2} = \eta_{2} + dh_{2}.
\end{equation*}
Hence we have
\begin{equation*}
f_{2}^{*} \eta_{2} = \eta - \beta.
\end{equation*}
Thus $f_{2}^{*} \eta_{2}$ is belongs to $\mathcal{S}_{1}$. Since $f_{2}^{*} \eta_{2}$ is on the same orbit of the action of $\Diff_{0}(M,\mathcal{F})$ as $\eta_{1}$, it follows that $\mathcal{S}_{1}$ intersects with every orbit of the action of $\Diff_{0}(M,\mathcal{F})$.
\end{proof}

\begin{proposition}\label{Proposition : One point}
Every orbit of the action of $\Ham(\mathfrak{k}_{0})$ on $\mathcal{S}$ intersects $\mathcal{S}_{1}$ at most one point.
\end{proposition}

\begin{proof}
Assume that we have
\begin{equation}\label{Equation : beta 1 and 2}
\eta - \beta_{1} = f^{*}(\eta - \beta_{2})
\end{equation}
for $\beta_{1}$, $\beta_{2}$ in $\mathbf{H}^{1}_{b}$ and $f$ in $\Ham(\mathfrak{k}_{0})$. We will show $\beta_{1} = \beta_{2}$. 

Here $f^{*}\beta_{2}$ is a basic harmonic $1$-form cohomologous to $\beta_{2}$, because $f$ is isotopic to the identity and preserves the transverse metric of $\mathcal{F}$. Since each basic cohomology class is represented by a unique harmonic form by the Hodge decomposition theorem for basic cohomology (see El~Kacimi~Alaoui and Hector~\cite{El Kacimi Alaoui Hector} or \'{A}lvarez~L\'{o}pez~\cite{Alvarez Lopez}), we have 
\begin{equation}\label{Equation : beta}
f^{*}\beta_{2} = \beta_{2}.
\end{equation}

We take an isotopy $\{\varphi_{s}\}_{s \in [0,1]}$ such that $\varphi_{s}$ is belongs to $\Ham_{0}(\mathfrak{k}_{0})$ for every $s$ in $[0,1]$, and $\varphi_{0}=\id$ and $\varphi_{1}=f$. Let $X_{s} = \frac{d}{dt}\big|_{u=s}\varphi_{u}$. Since $\eta - \varphi_{s}^{*}\eta$ is exact, $\mathcal{L}_{X_{s}}\eta = \iota_{X_{s}}d\eta + d(\eta(X_{s}))$ is exact. Thus we have a smooth function $h_{s}$ on~$M$ such that
\begin{equation}\label{Equation : hamiltonian}
\iota_{X_{s}}d\eta = dh_{s}
\end{equation}
for each $s$ in $[0,1]$. Since $\mathcal{L}_{h\xi} \eta = dh$ for any smooth function $h$ on~$M$,~\eqref{Equation : hamiltonian} implies
\begin{equation}\label{Equation : alpha is preserved}
\mathcal{L}_{X_{s} - (h_{s} + \eta(X_{s}))\xi} \eta = \iota_{X_{s}}d\eta + d(\eta(X_{s})) - d(h_{s} + \eta(X_{s})) = 0.
\end{equation}
Let $\{\varphi'_{s}\}_{s \in [0,1]}$ be the isotopy generated by vector fields $\{X_{s} + (h_{s} - \eta(X_{s}))\xi\}_{s \in [0,1]}$. We have
\begin{equation}\label{Equation : alpha 3}
(\varphi'_{1})^{*}\eta = \eta
\end{equation}
by~\eqref{Equation : alpha is preserved}. Letting $f_{3} = (\varphi'_{1})^{-1} \circ f$, we have
\begin{equation}\label{Equation : f3}
f^{*}\eta = f_{3}^{*} (\varphi'_{1})^{*} \eta = f_{3}^{*} \eta
\end{equation}
by~\eqref{Equation : alpha 3}. The difference of the two vector fields which generate $\{\varphi_{s}\}_{s \in [0,1]}$ and $\{\varphi'_{s}\}_{s \in [0,1]}$ is $(h_{s} + \eta(X_{s}))\xi$, which is tangent to $\mathcal{F}$. Thus $\varphi_{s}$ and $\varphi'_{s}$ induces the same map on the leaf space of $(M,\mathcal{F})$. Thus $\{(\varphi'_{s})^{-1} \circ \varphi_{s}\}_{s \in [0,1]}$ gives an isotopy such that $(\varphi'_{0})^{-1} \circ \varphi_{0} = \id$, $(\varphi'_{1})^{-1} \circ \varphi_{1} = f_{3}$ and $(\varphi'_{s})^{-1} \circ \varphi_{s}$ is belongs to $\Diff_{0}(M,\mathcal{F})$. Let $\psi_{s} = (\varphi'_{s})^{-1} \circ \varphi_{s}$. Let $Y_{s} = \frac{d}{du}\big|_{u=s}\psi_{u}$. Since $\psi_{s}$ is belongs to $\Diff_{0}(M,\mathcal{F})$, this $Y_{s}$ is tangent to $\mathcal{F}$ for each $s$. Hence we have
\begin{equation}\label{Equation : xt}
\iota_{Y_{s}} d\eta =0.
\end{equation}
By~\eqref{Equation : xt}, we have
\begin{multline*}
f_{3}^{*}\eta - \eta = \int_{0}^{1}\frac{d}{du}\Big|_{u=s}(\psi_{u}^{*}\eta) ds = \int_{0}^{1}\psi_{s}^{*} (\mathcal{L}_{Y_{s}}\eta) ds = \\ \int_{0}^{1}\psi_{s}^{*} d(\eta(Y_{s})) ds = d\Big( \int_{0}^{1}\psi_{s}^{*} (\eta(Y_{s})) ds \Big).
\end{multline*}
\noindent Hence, letting $h_{3} =  \int_{0}^{1}\psi_{u}^{*} (\eta(Y_{s})) du$, we have
\begin{equation}\label{Equation : dh2}
f_{3}^{*}\eta - \eta = dh_{3}.
\end{equation}

By~\eqref{Equation : beta 1 and 2},~\eqref{Equation : beta},~\eqref{Equation : f3} and~\eqref{Equation : dh2}, we have
\begin{equation*}
\beta_{2} - \beta_{1} = dh_{3}. 
\end{equation*}
Since the right hand side is a harmonic form, we have $\beta_{1} = \beta_{2}$.
\end{proof}

Theorem~\ref{Theorem : H1} follows from Propositions~\ref{Proposition : Intersects} and~\ref{Proposition : One point}. 

\begin{remark}
We remark that we obtain the following result by the proof of Proposition~\ref{Proposition : Intersects}:
\begin{proposition}
Let $\eta_{1}$ and $\eta_{2}$ be the underlying contact forms on a closed manifold~$M$ of two $K$-contact structures. If $H^{1}(M;\mathbb{R})=0$ and the Reeb flows of $\eta_{1}$ and $\eta_{2}$ are equal, then there exists a real number $r$ and a diffeomorphism $f \colon M \to M$ such that $rf^{*}\eta_{2} = \eta_{1}$.
\end{proposition}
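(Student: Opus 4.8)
The plan is to run the argument of Proposition \ref{Proposition : Intersects} with the transversely K\"{a}hler structure there replaced by the weaker transverse symplectic datum carried by a $K$-contact form. Write $\mathcal{F}$ for the common orbit foliation of the Reeb vector fields $\xi_1,\xi_2$ of $\eta_1,\eta_2$. First I would set up the hypotheses needed to bring in the machinery used above: since each $\eta_j$ is $K$-contact, its associated contact metric $\tilde g_j$ is a Killing metric on $\mathcal{F}$ with $\eta_j$ as characteristic form, so $\mathcal{F}$ is an isometric Riemannian flow, hence homologically orientable, and $d\eta_1,d\eta_2$ are transverse symplectic forms, each representing a nonzero multiple of the basic Euler class of $\mathcal{F}$. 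Feeding $H^1(M;\mathbb{R})=0$ into the Gysin sequence \eqref{Equation : Gysin for isometric flows} of the isometric flow $\mathcal{F}$, which exhibits $H^1_b(M/\mathcal{F})$ as a subspace of $H^1(M;\mathbb{C})$, gives $H^1_b(M/\mathcal{F})=0$, so that every closed basic $1$-form on $(M,\mathcal{F})$ is exact.

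Next I would apply Lemma \ref{Lemma : Moser} to the characteristic forms $\eta_1,\eta_2$ to obtain a real number $r$ and $f_1 \in \Diff_0(M,\mathcal{F})$ with $\eta_1|_{T\mathcal{F}} = r\,f_1^{*}(\eta_2|_{T\mathcal{F}})$, and set $\eta_2' = r f_1^{*}\eta_2$. Since $f_1$ fixes each leaf, $d\eta_2' = r\,d\eta_2$ is still basic and transverse symplectic, $\eta_2'|_{T\mathcal{F}} = \eta_1|_{T\mathcal{F}}$, and evaluating the latter on $\xi_1 \in T\mathcal{F}$ shows $\eta_2'(\xi_1) = 1$ while $\iota_{\xi_1}d\eta_2' = 0$; thus $\xi_1$ is the Reeb vector field of $\eta_2'$ as well. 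Consequently $\gamma := \eta_1 - \eta_2'$ vanishes on $T\mathcal{F}$ and is $\xi_1$-invariant, so it is a basic $1$-form, and $[d\eta_1] = [d\eta_2']$ in $H^2_b(M/\mathcal{F})$.

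The crucial step is to promote this last cohomological identity to the honest equality $d\eta_1 = d\eta_2'$. Granting it, $\gamma$ is a closed basic $1$-form, hence exact by the first paragraph, say $\gamma = dh$ with $h$ basic (so $\xi_1 h = 0$), and Lemma \ref{Lemma : Flow}(iii) produces $f_2 \in \Diff_0(M,\mathcal{F})$ with $f_2^{*}\eta_2' = \eta_2' + dh = \eta_1$, whence $(f_1 \circ f_2)^{*}\eta_2 = \tfrac1r\eta_1$ and the proposition follows. To obtain $d\eta_1 = d\eta_2'$ I would run a Gray-stability (Moser) argument along the segment $\eta_t = (1-t)\eta_2' + t\eta_1$: each $\eta_t$ satisfies $\eta_t(\xi_1)=1$ and $\iota_{\xi_1}d\eta_t = 0$, so once $\eta_t$ is contact its Reeb vector field is $\xi_1$, and the basic vector fields $X_t \in \ker\eta_t$ solving $\iota_{X_t}d\eta_t = \eta_2' - \eta_1$ integrate to an $\mathcal{F}$-preserving isotopy conjugating $\eta_1$ back to $\eta_2'$ (in fact this already proves the statement, so one could even bypass Lemma \ref{Lemma : Flow}). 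The main obstacle, and the only point where more than the bare contact condition enters, is to check that the pencil $(1-t)\,d\eta_2' + t\,d\eta_1$ stays non-degenerate on $TM/T\mathcal{F}$ for $t\in[0,1]$ (equivalently that $\eta_t$ remains a contact form); here one must exploit that $\eta_1$ and $\eta_2$ are $K$-contact, using that each $d\eta_j$ is positive for a transverse almost complex structure coming from the corresponding contact metric structure, together with the fact that after Lemma \ref{Lemma : Moser} the forms $d\eta_1$ and $d\eta_2'$ already share the null direction $\xi_1$. Dropping the hypothesis $H^1(M;\mathbb{R})=0$, the same scheme would only yield $f^{*}\eta_2 = r\eta_1 - \beta$ with $\beta$ a basic harmonic $1$-form, in line with the moduli description of Theorem \ref{Theorem : H1}.
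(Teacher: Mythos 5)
Your proposal reproduces the argument the paper intends: the paper offers no separate proof of this proposition, merely asserting it ``as a consequence of Proposition \ref{Proposition : Intersects}'', and your steps (isometricity and homological orientability of $\mathcal{F}$, $H^{1}_{b}(M/\mathcal{F})=0$ via the Gysin sequence \eqref{Equation : Gysin for isometric flows}, Lemma \ref{Lemma : Moser} to normalize along the leaves, basicness of $\gamma=\eta_{1}-\eta_{2}'$, then Lemma \ref{Lemma : Flow}(iii)) are exactly the steps of that proof transplanted to the $K$-contact setting. The one place you depart from it is the place you yourself flag as ``the crucial step'': obtaining the honest equality $d\eta_{1}=d\eta_{2}'$, equivalently the closedness of $\gamma$.

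That step is a genuine gap, and your proposed repair does not close it. In Proposition \ref{Proposition : Intersects} the closedness of $\eta-\eta_{2}$ comes for free, because the hypothesis there is that the underlying transversely K\"{a}hler flows are \emph{isomorphic}, which includes the literal equality $d\eta=d\eta_{1}$ of transverse K\"{a}hler forms; only the normalization $r=1$ has to be argued cohomologically. Under the literal hypothesis of the present proposition (only the Reeb orbit foliations coincide) you obtain only $[d\eta_{1}]=[d\eta_{2}']$ in $H^{2}_{b}(M/\mathcal{F})$, and the transverse Moser/Gray argument you sketch requires the pencil $(1-t)\,d\eta_{2}'+t\,d\eta_{1}$ to remain nondegenerate on $TM/T\mathcal{F}$. $K$-contactness does not supply this once the transverse codimension is at least $4$: the forms $d\eta_{1}$ and $d\eta_{2}'$ tame \emph{different} transverse almost complex structures in general, and two symplectic forms taming different almost complex structures need not have nondegenerate convex combinations --- this is precisely the classical obstruction in the isotopy problem for cohomologous symplectic forms, which is not solvable by a linear path. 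So either one reads the proposition with the stronger hypothesis parallel to Proposition \ref{Proposition : Intersects}, namely that the underlying isometric flows together with their transverse symplectic forms agree up to a leaf-preserving diffeomorphism and a scalar --- in which case $\gamma$ is closed, $H^{1}_{b}(M/\mathcal{F})=0$ forces $\gamma=dh$ with $h$ basic, and Lemma \ref{Lemma : Flow}(iii) finishes exactly as you describe --- or an additional argument, supplied neither by you nor by the paper, is needed to identify the two transverse symplectic forms. You have correctly located the issue, but the taming/positivity remark at the end of your proposal is not a proof of the required nondegeneracy.
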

\end{remark}

\section{Examples}

\subsection{Standard Sasakian metrics on spheres}
We recall the standard Sasakian metric on $S^{2n-1}$. Consider a function $r$ on $\mathbb{R}^{2n}$ defined by
\begin{equation*}
r(x_{1}, y_{1}, x_{2}, y_{2}, \cdots, x_{n}, y_{n})=\sqrt{x_{1}^{2}+y_{1}^{2}+x_{2}^{2}+y_{2}^{2}+ \cdots +x_{n}^{2}+y_{n}^{2}},
\end{equation*}
where $(x_{1}, y_{1}, x_{2}, y_{2}, \cdots, x_{n}, y_{n})$ is the standard coordinate on $\mathbb{R}^{2n}$. Let $S^{2n-1} = r^{-1}(1)$ be the unit sphere of $\mathbb{R}^{2n}$. We consider the standard metric $g_{\std}$ and the standard contact form on $\eta_{\std}$ on $\mathbb{R}^{2n}-\{0\}$ defined by
\begin{align*}
g_{\std} & =\sum_{i=1}^{n} (dx_{i} \otimes dx_{i} + dy_{i} \otimes dy_{i}), &
\eta_{\std} & = \frac{1}{2r(x_{1},\cdots,y_{n})^{2}} \sum_{i=1}^{n} (x_{i} dy_{i} - y_{i} dx_{i}).
\end{align*}
Then $(S^{2n-1}, g_{\std}|_{S^{2n-1}}, \eta_{\std}|_{S^{2n-1}})$ is a Sasakian manifold by definition, because the K\"{a}hler manifold $(\mathbb{R}^{2n}-\{0\},g_{\std},d\eta_{\std})$ is isomorphic to $(S^{2n-1} \times \mathbb{R}_{>0}, dr^{2} + r^{2}g_{\std}|_{S^{2n-1}}, d(r^{2}\eta_{\std}|_{S^{2n-1}}))$.

The flow generated by the Reeb vector field of $\eta_{\std}|_{S^{2n-1}}$ is given by the principal $S^{1}$-action whose orbits are tangent to the fiber of Hopf fibration. The base space of the Hopf fibration is $\mathbb{C}P^{n-1}$.

To describe the deformation of transversely holomorphic flows, we use the following
\begin{proposition}[Proposition~6.1 of Girbau, Haefliger and Sundararaman~\cite{Girbau Haefliger Sundararaman}]\label{thm:GHS}
The Kuranishi space of the deformation of the transversely holomorphic flow defined by fibers of a circle bundle over a complex manifold $X$ is identified with an open neighborhood of~$0$ in $H^{0}(X,T^{1,0}X)$, the space of holomorphic vector fields on $X$, if $X$ satisfies both of the following:
\begin{align}\label{Equation : Condition 1}
H^{1,0}(X)=0, &\,\,\,  H^{1}(X,T^{1,0}X)=0.
\end{align}
\end{proposition}
\noindent Here $\mathbb{C}P^{n-1}$ satisfies $H^{1,0}(\mathbb{C}P^{n-1})=0$ and $H^{1}(\mathbb{C}P^{n-1},T^{1,0}\mathbb{C}P^{n-1})=0$. Hence Proposition~\ref{thm:GHS} implies that the Kuranishi space of deformation as a transversely holomorphic flow is identified with an open neighborhood of~$0$ in $H^{0}(\mathbb{C}P^{n-1},T^{1,0}\mathbb{C}P^{n-1})$, which is of complex dimension $n^{2}-1$. Among them, infinitesimal deformation of transversely holomorphic Riemannian flows form a union of real vector subspaces of real dimension $n-1$. 

Because $H^{0,2}(\mathbb{C}P^{n-1})=0$, Corollary~\ref{Corollary : h02 = 0} implies that the Sasakian metric $(g_{\std},\eta_{\std})$ is stable, that is, a family of compatible Sasakian metric exists for any small deformation of the Reeb flow as transversely holomorphic Riemannian flows. Since $S^{2n-1}$ is simply connected, Corollary~\ref{Corollary : H1 = 0} implies that the space of isomorphism classes of Sasakian metrics are identified with the isomorphism classes of the underlying transversely K\"{a}hler flows.

We can apply Proposition~\ref{thm:GHS}, Corollaries~\ref{Corollary : h02 = 0} and~\ref{Corollary : H1 = 0} to a Sasakian metric on the circle bundles associated to positive line bundles over Fano manifolds $X$ which satisfies $H^{1}(X,T^{1,0}X)=0$ in a similar way.

\begin{remark}
We note that deformation of the Hopf fibration as transversely holomorphic foliations is studied by Duchamp and Kalka~\cite{Duchamp Kalka} and Haefliger~\cite{Haefliger}. Other results related to Proposition~\ref{thm:GHS} is obtained by Boyer and Galicki in Section~8.2.2 of~\cite{Boyer Galicki}.
\end{remark}

\subsection{Circle bundles over complex tori}\label{Section : Example}

We present an example of deformations of the Reeb flow of a Sasakian manifold which does not admit compatible Sasakian metrics.

Let $X$ be an projective complex torus with a positive holomorphic line bundle~$E$. We fix a Hermitian metric on $E$ so that its curvature form is positive. Let~$M$ be the unit circle bundle of $E$. Then~$M$ has a Sasakian metric whose underlying transversely K\"{a}hler flow is defined by the fibers of the $S^{1}$-bundle by the Boothby-Wang construction due to Hatakeyama (Corollary of Theorem~4 of~\cite{Hatakeyama}).

It is well known that there exists a smooth family of complex tori $\{X^{t}\}_{t\in ]-1,1[}$ and a dense subset $K$ in $]-1,1[$ such that $X^{0}=X$ and $X^{t}$ is not projective for every $t$ in $K$. It is easy to see that every complex torus is K\"{a}hler. So $X^{t}$ is K\"{a}hler.

We denote the total space of the family of complex tori by $\mathcal{T}$. We fix a trivialization $\theta \colon \mathcal{T} \cong X \times ]-1,1[$ as a smooth fiber bundle over $]-1,1[$. We pull back the complex Hermitian line bundle $E$ on $X$ to $\mathcal{T}$ by $\theta \circ \pr_{1}$, where $\pr_{1} \colon X \times ]-1,1[ \to X$ is the first projection. We define $M^{t}$ as the unit circle bundle associated to the complex line bundle $(\theta^{*} \pr_{1}^{*} E)|_{X^{t}} \to X^{t}$. Let $\mathcal{F}^{t}$ be a flow on $M^{t}$ defined by the fibers of the circle bundle $M^{t} \to X^{t}$. Since the leaf space $X^{t}$ is K\"{a}hler, $\mathcal{F}^{t}$ is a transversely K\"{a}hler flow for $t$ sufficiently close to~$0$.

There exists a compatible Sasakian metric on $M^{0}$ by definition. But, for $t$ in $K$, $M^{t}$ does not have any compatible Sasakian metric. Indeed, if $M^{t}$ has a compatible Sasakian metric, then $X^{t}$ must be projective by Theorem~4 of Hatakeyama~\cite{Hatakeyama}. This is contradiction.

In this example, the basic Euler class of $\mathcal{F}^{t}$ is the Euler class of circle bundles $M^{t} \to X^{t}$ and can be considered as an element of $H^{2}(X^{t};\mathbb{Z})$. Clearly this class is of topological nature and independent of $t$. On the other hand, the Hodge decomposition $H^{2}(X^{t};\mathbb{C}) \cong H^{2,0}(X^{t}) \oplus H^{1,1}(X^{t}) \oplus H^{0,2}(X^{t})$ changes when $t$ varies.

\end{document}